\DeclareFontFamily{U}{ntxmia}{\skewchar \font =127}
 \DeclareFontShape{U}{ntxmia}{m}{it}{
                        <-> \ntxmath@scaled ntxmia
                      }{}    
                      \DeclareFontShape{U}{ntxmia}{b}{it}{
                        <-> \ntxmath@scaled ntxbmia
                      }{}
\def\NAT@spacechar{~}
\crefname{figure}{figure}{figures}
\Crefname{figure}{Figure}{Figures}
\newtheorem{definition}{Definition}[section]
\newtheorem{claim}{Claim}
\newtheorem{theorem}[definition]{Theorem}
\newtheorem{lemma}[definition]{Lemma}
\newtheorem{conjecture}[definition]{Conjecture}
\newtheorem{question}[definition]{Question}
\newtheorem{remark}[definition]{Remark}
\newenvironment{claimproof}{%
\let\origqed=\qedsymbol%
\renewcommand{\qedsymbol}{$\blacktriangleleft$}%
\begin{proof}}{\end{proof}\let\qedsymbol=\origqed}
\numberwithin{equation}{section}
\renewcommand{\binom}[2]{\ensuremath{\mleft(\kern-.1em\genfrac{}{}{0pt}{}{#1}{#2}\kern-.1em\mright)}}    
\newcommand{\inbinom}[2]{\ensuremath{\bigl(\kern-.1em\genfrac{}{}{0pt}{}{#1}{#2}\kern-.1em\bigr)}} 
\newcommand*\nume{\ensuremath{\mathrm{e}}}
\newcommand{\cC}{\mathcal{C}}
\def\moverlay{\mathpalette\mov@rlay}
\def\mov@rlay#1#2{\leavevmode\vtop{%
  \baselineskip\z@skip \lineskiplimit-\maxdimen
  \ialign{\hfil$\m@th#1##$\hfil\cr#2\crcr}}}
\newcommand{\charfusion}[3][\mathord]{
    #1{\ifx#1\mathop\vphantom{#2}\fi
        \mathpalette\mov@rlay{#2\cr#3}
      }
    \ifx#1\mathop\expandafter\displaylimits\fi}
\newcommand{\cupdot}{\charfusion[\mathbin]{\cup}{\cdot}}
\newcommand{\bigO}{\ensuremath{\mathcal{O}}}
\newcommand{\dist}{\operatorname{dist}}
\newcommand{\COMMENT}[1]{}
\title{Hamiltonicity of graphs perturbed by a random regular graph}
\author[A.~Espuny D\'iaz]{Alberto Espuny D\'iaz}
\email{alberto.espuny-diaz@tu-ilmenau.de}
\author[A.~Gir\~{a}o]{Ant\'onio Gir\~{a}o}
\email{girao@maths.ox.ac.uk}
\address[Espuny D\'iaz]{Institut f\"ur Mathematik, Technische Universit\"at Ilmenau, 98684 Ilmenau, Germany.}
\address[Gir\~{a}o]{Mathematical Institute, University of Oxford, Oxford OX2 6GG, United Kingdom.}
\thanks{This project has received partial funding from the European Research Council (ERC) under the European Union's Horizon 2020 research and innovation programme (grant agreement no.~786198, A.~Espuny D\'iaz).
The research leading to these results was also partially supported by the Carl Zeiss Foundation (A.~Espuny D\'iaz) and the EPSRC grant no.\ EP/N019504/1 (A.~Gir\~ao) and EPSRC grant no.\ EP/V007327/1 (A.~Gir\~ao).}
\date{\today}
\begin{document}

\begin{abstract}
We study Hamiltonicity and pancyclicity in the graph obtained as the union of a deterministic $n$-vertex graph $H$ with $\delta(H)\geq\alpha n$ and a random $d$-regular graph $G$, for $d\in\{1,2\}$.
When $G$ is a random $2$-regular graph, we prove that a.a.s.~$H\cup G$ is pancyclic for all $\alpha\in(0,1]$, and also extend our result to a range of sublinear degrees.
When $G$ is a random $1$-regular graph, we prove that a.a.s.~$H\cup G$ is pancyclic for all $\alpha\in(\sqrt{2}-1,1]$, and this result is best possible.
Furthermore, we show that this bound on $\delta(H)$ is only needed when $H$ is `far' from containing a perfect matching, as otherwise we can show results analogous to those for random $2$-regular graphs.
Our proofs provide polynomial-time algorithms to find cycles of any length.
\end{abstract}

\maketitle

\section{Introduction}

Two classical areas of research in graph theory are that which deals with extremal results, and that which studies properties of random structures.
In the former area, one often looks for sufficient conditions to guarantee that a graph satisfies a certain property.
An example of such a result is the well-known Dirac's theorem~\cite{Dirac52}, which states that any graph $G$ on $n\geq3$ vertices with minimum degree at least $n/2$ contains a Hamilton cycle (that is, a cycle which contains all vertices of $G$).
In the latter, one considers a random structure, chosen according to some distribution, and studies whether it satisfies a given property with sufficiently high probability.
For instance, the binomial random graph $G_{n,p}$ (where $G_{n,p}$ is obtained by considering a vertex set of size~$n$ and adding each of the possible $\inbinom{n}{2}$ edges with probability $p$ and independently of each other) is known to contain a Hamilton cycle asymptotically almost surely (a.a.s.) whenever $p\geq(1+\epsilon)\log n/n$~\cite{Kor77}, while if $p\leq(1-\epsilon)\log n/n$, then a.a.s.~the graph is not even connected.

A more recent area of research at the interface of extremal combinatorics and random graph theory is that of \emph{randomly perturbed graphs}.
The general setting is as follows.
One considers an arbitrary graph $H$ from some class of graphs (usually, given by a minimum degree condition) and a random graph $G$, and asks whether the union of $H$ and $G$ satisfies a desired property a.a.s.
This can be seen as a way to bridge between the two areas described above.
Indeed, suppose that $H$ is any $n$-vertex graph with minimum degree $\delta(H)\geq\alpha n$ and $G=G_{n,p}$ is a binomial random graph (on the same vertex set as $H$), and consider the property of being Hamiltonian (that is, containing a Hamilton cycle).
If $\alpha\geq1/2$, then Dirac's theorem guarantees that $H\cup G$ is Hamiltonian, for all values of $p$.
If, on the other hand, $\alpha=0$, $H$ could be the empty graph, so we are left simply with the random graph $G$.
The relevant question, then, is whether, for all $\alpha\in(0,1/2)$, any graph $H$ with minimum degree $\alpha n$ is sufficiently `close' to Hamiltonicity that adding a random graph $G$ (which is itself not a.a.s.~Hamiltonian) will yield Hamiltonicity.
In particular, the goal is to determine whether the range of~$p$ for which this holds is significantly larger than the range for $G_{n,p}$ itself.

The randomly perturbed graph model was introduced by \citet{BFM03}, who studied precisely the problem of Hamiltonicity.
They proved that, for any constant $\alpha>0$, if $H$ is an $n$-vertex graph with $\delta(H)\geq\alpha n$, then a.a.s.~$H\cup G_{n,p}$ is Hamiltonian for all $p\geq C(\alpha)/n$.
Note, in particular, that this increases the range of $p$ given by the random graph model.
This result was very recently generalised by \citet{HMMMP21} to allow $\alpha$ to tend to $0$ with $n$ (that is, to consider graphs $H$ which are not dense), similarly improving the range of~$p$.
In a different direction, the results of \citet{BFM03} about Hamiltonicity were generalised by \citet{KKS16} to pancyclicity, that is, the property of containing cycles of all lengths between $3$ and $n$.

Other properties that have been studied in the context of randomly perturbed graphs are, e.g., the existence of powers of Hamilton cycles~\cite{BPSS22,BMPP20,DRRS20,ADRRS21,NT21}, $F$-factors~\cite{BTW19,HMT19+,BPSS20,BPSS21}, spanning bounded degree trees~\cite{KKS17,BHKMPP19} and (almost) unbounded degree trees~\cite{JK19}, or general bounded degree spanning graphs~\cite{BMPP20}.
The model of randomly perturbed graphs has also been extended to other settings.
For instance, Hamiltonicity has been studied in randomly perturbed directed graphs~\cite{BFM03,KKS16}\COMMENT{Both Hamiltonicity and pancyclicity.}, hypergraphs~\cite{KKS16,MM18,BHKM19,HZ20,CHT20}\COMMENT{\cite{BHKM19,CHT20} are about powers of Hamilton cycles. \cite{HZ20} is the only one with degree conditions for $H$ (best possible); all the others have codegree conditions.} and subgraphs of the hypercube~\cite{CEGKO20}.
A common phenomenon in randomly perturbed graphs is that, by considering the union with a dense graph (i.e., with linear degrees), the threshold for the probabilities of different properties is significantly lower than that of the classical $G_{n,p}$ model.

In this paper, we study the analogous problem where the random graph that is added to a deterministic graph is a random \emph{regular} graph (a graph is \emph{regular} when all its vertices have the same degree).
To be more precise, we will consider a given $n$-vertex graph $H$ with $\delta(H)\geq\alpha n$, and we will study the Hamiltonicity and pancyclicity of $H\cup G$, where $G$ is a random $d$-regular graph, for some $d\in\mathbb{N}$.
We write $G_{n,d}$ for a graph chosen uniformly at random from the set of all $d$-regular graphs on~$n$ vertices (we implicitly assume that $nd$ is even throughout).
The model of random regular graphs, though harder to analyse than the binomial model, has been studied thoroughly.
In particular, it is a well-known fact that $G_{n,d}$ is a.a.s.~Hamiltonian for all $3\leq d\leq n-1$~\cite{RW92,RW94,CFR02,KSVW01}.
It follows that, for all $d\geq3$, $H\cup G_{n,d}$ is a.a.s.~Hamiltonian independently of $H$, so the problem we consider only becomes relevant for $d\in\{1,2\}$.
For more information about random regular graphs, we recommend the survey of \citet{Wor99}.

It turns out that the behaviour of the problem is quite different in each of these two cases.
When we consider $G=G_{n,2}$ (also referred to as a random $2$-factor), we obtain a result similar to those obtained in the binomial setting: for every constant $\alpha>0$, if $H$ is an $n$-vertex graph with $\delta(H)\geq\alpha n$, then a.a.s.~$H\cup G$ is Hamiltonian.
Even more, we can prove that, in this case, a.a.s. $H\cup G$ is pancyclic, and also extend the range of $\alpha$ for which this holds to some function of $n$ which tends to $0$.

\begin{theorem}\label{thm:2factor}
Let $\alpha=\omega((\log n/n)^{1/4})$\COMMENT{We could actually compute an explicit constant $C$ and write $\alpha\geq C(\log n/n)^{1/4}$.}, and let $H$ be an $n$-vertex graph with $\delta(H)\geq \alpha n$.
Then, a.a.s. $H\cup G_{n,2}$ is pancyclic. 
\end{theorem}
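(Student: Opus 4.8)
The plan is to reduce the problem to showing that for each target length $\ell\in\{3,\dots,n\}$, the union $H\cup G_{n,2}$ a.a.s.\ contains a cycle of length exactly $\ell$. The natural strategy splits into two regimes. For short cycles (say $\ell$ up to roughly $n/2$), I would work almost entirely inside $H$: since $\delta(H)\geq\alpha n$ with $\alpha$ not too small, $H$ itself already contains cycles of many lengths (for instance, $H$ contains a cycle of length at least $\alpha n+1$, and by a theorem of Bondy-type or by Bondy's meta-conjecture results, a graph with large minimum degree that contains a long cycle typically contains cycles of all smaller lengths too — but one must be careful, so I would instead argue directly). Concretely, take a longest path or a long cycle in $H$ and use the minimum-degree condition to perform rotations/chord arguments to produce cycles of every length in a long interval; where $H$ alone does not suffice, use one or two random edges of $G_{n,2}$ to ``short-circuit'' a long path into a cycle of the desired length.

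The core of the argument is the long-cycle regime, i.e.\ producing cycles of length $\ell$ for $\ell$ close to $n$, culminating in the Hamilton cycle $\ell=n$. Here the idea is the standard absorption/connecting philosophy adapted to a random $2$-factor: the random $2$-regular graph $G_{n,2}$ a.a.s.\ decomposes into a small number of long cycles (in fact a.a.s.\ a single Hamilton cycle when $n$ is large, but we should not rely on that since we want \emph{all} lengths and since the sublinear-$\alpha$ extension needs more robustness). A cleaner route: expose $G_{n,2}$ as a union of two random perfect matchings $M_1\cup M_2$ (valid up to standard contiguity results for even $n$; for odd $n$ use a near-perfect matching plus a matching). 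Then $M_1$ gives a near-perfect matching of $H$, and I want to use the edges of $M_1$ together with short paths in $H$ to build a long cycle, then use $M_2$ to adjust the length down to the exact target by rerouting. The minimum degree $\alpha n$ guarantees that between any two vertices there are many internally short connections in $H$, so a ``reservoir'' of flexible connecting paths can be set aside.

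The key quantitative step — and the place where the hypothesis $\alpha=\omega((\log n/n)^{1/4})$ enters — is a counting/concentration estimate: for a uniformly random perfect matching $M$ on $[n]$ and a graph $H$ with $\delta(H)\geq\alpha n$, the number of edges of $M$ whose endpoints can be ``linked up'' in $H$ through the current partial structure is concentrated, and the error terms one accumulates over the $O(1/\alpha)$ or $O(\log n)$ rounds of merging/rerouting stay below $\alpha n$. The exponent $1/4$ should come from requiring $\alpha^2 n \gg \sqrt{n\log n}$ (a square-root-of-$n$ fluctuation competing with an $\alpha^2 n$-sized gain per round), i.e.\ $\alpha^4 n\gg\log n$. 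I expect this concentration bound to be the main obstacle: random perfect matchings lack the independence of $G_{n,p}$, so one must use either the permanent/switching method, Azuma's inequality on the matching exposed edge-by-edge, or known results on the distribution of $G_{n,2}$ restricted to a dense graph. Once that lemma is in hand, the rest is a careful but routine bookkeeping of how to turn a long cycle of length $n-O(\sqrt{n\log n})$ into cycles of every length from that value down to $3$, splicing in and out the reserved connecting paths, and invoking the short-cycle argument for the remaining small lengths. Throughout, every step is constructive, giving the claimed polynomial-time algorithm.
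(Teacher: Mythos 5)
Your high-level architecture (a concentration lemma for edge distribution in a random $2$-factor, an absorbing structure set aside in advance, iterative merging into a long cycle, then rerouting to hit all lengths) is genuinely close to what the paper does, and your heuristic for where the exponent $1/4$ comes from ($\alpha^4 n \gg \log n$, via an Azuma-type bound on the configuration exposed edge by edge) is essentially the correct explanation: the paper's Lemma~3.2 is proved precisely via a martingale bound (Lemma~2.1) and needs $\alpha^2=\omega((\log n/n)^{1/2})$. However, there are two concrete problems in your plan.

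First, the decomposition of $G_{n,2}$ as the union $M_1\cup M_2$ of two independent random perfect matchings via contiguity is not valid: every cycle of $M_1\cup M_2$ must alternate between $M_1$- and $M_2$-edges and therefore has even length, whereas $G_{n,2}$ a.a.s.\ contains odd cycles as well. The two models have asymptotically disjoint supports and cannot be contiguous (the superposition contiguity results you allude to, in the spirit of Robinson--Wormald and Janson, require total degree at least $3$). Relatedly, your parenthetical claim that $G_{n,2}$ is ``a.a.s.\ a single Hamilton cycle'' is also false: $G_{n,2}$ a.a.s.\ has $\Theta(\log n)$ cycles, and the probability of a single cycle is $\Theta(n^{-1/2})$. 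This removes the foundation for the $M_1$-plus-$M_2$ route. The paper sidesteps this entirely by working with $G_{n,2}$ directly in the configuration model, proving (Lemma~3.1) that a.a.s.\ $G_{n,2}$ has at most $\log^2 n$ components and then doing the merging on the genuine cycle decomposition.

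Second, the short-cycle regime is underspecified. With $\alpha$ sublinear, $H$ alone need not contain cycles of all lengths up to $n/2$; the longest-cycle guarantee from $\delta(H)\geq\alpha n$ is only of order $\alpha n$, and there is no clean Bondy-type pancyclicity statement available here without extra structure. The paper's cleaner mechanism is to build the near-spanning cycle $C$ first and then extract subpaths $P'\subseteq C$ of the right length, closed up by a single available $G$-edge $zz'$ with $z\in N_H(x)$, $z'\in N_H(y)$ where $x,y$ are the endpoints of $P'$ --- for this to always be possible one needs to maintain a reservoir of such ``available'' edges throughout, which is the bookkeeping your sketch waves away. You would need to make the rotation/chord argument in $H$ precise, or better, mimic the paper's approach of producing \emph{all} lengths from a single long cycle plus a small absorbing path. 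As written, the proposal has a genuine gap in both the probabilistic setup and the length-by-length delivery.
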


In contrast to this, when we consider $G=G_{n,1}$ (that is, a random perfect matching), there is a particular value $\alpha^*<1/2$ such that, for all $\alpha>\alpha^*$, if $H$ is an $n$-vertex graph with $\delta(H)\geq\alpha n$, then a.a.s.\ $H\cup G$ is Hamiltonian.

\begin{theorem}\label{thm:matching}
For all $\epsilon>0$, the following holds.
Let $\alpha\coloneqq(1+\epsilon)(\sqrt{2}-1)$, and let $H$ be an $n$-vertex graph with $\delta(H)\geq \alpha n$.
Then, a.a.s.~$H\cup G_{n,1}$ is pancyclic. 
\end{theorem}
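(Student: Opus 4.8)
The plan is to deduce \cref{thm:matching} from the Hamiltonicity of $H\cup G_{n,1}$ via a standard weak-pancyclicity reduction. Since $\alpha=(1+\epsilon)(\sqrt2-1)>2/5$, the minimum degree of $H\cup G_{n,1}$ exceeds $2n/5$, and in particular is at least $(n+2)/3$; so once we know that a.a.s.\ $H\cup G_{n,1}$ is Hamiltonian and non-bipartite we are done, because a non-bipartite graph of order $n$ with minimum degree at least $(n+2)/3$ is weakly pancyclic (by the theorem of Brandt, Faudree and Goddard), while minimum degree exceeding $2n/5$ together with non-bipartiteness forces a triangle (by the Andr\'asfai--Erd\H{o}s--S\'os theorem), so the girth is $3$, the circumference is $n$, and the graph is pancyclic. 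Non-bipartiteness is immediate: if $H$ is non-bipartite there is nothing to prove, and otherwise $H$ has colour classes of sizes in $(2n/5,3n/5)$, so a.a.s.\ $G_{n,1}$ puts an edge inside one of them. Hence it remains to prove that a.a.s.\ $H\cup G_{n,1}$ is Hamiltonian, and I would split according to the deficiency $\mathrm{def}(H)\coloneqq n-2\nu(H)$ (with $\nu$ the matching number), the threshold being a small constant $\eta=\eta(\epsilon)>0$.

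\textbf{Small deficiency.} If $\mathrm{def}(H)\le\eta n$ then $H$ is close to having a perfect matching and the bound $\alpha>\sqrt2-1$ plays no role --- any positive constant would suffice --- so one argues essentially as in the proof of \cref{thm:2factor}. Fix a matching $M_H\subseteq H$ missing at most $\eta n$ vertices; then $M_H\cup G_{n,1}$ is a spanning disjoint union of paths and cycles which, since $G_{n,1}$ is random, a.a.s.\ has only sublinearly many cycles, hence $O(\eta n)$ components in all. A component-merging (rotation--extension) argument using the linear minimum degree of $H$ then assembles these into a Hamilton cycle of $H\cup G_{n,1}$.

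\textbf{Large deficiency.} Suppose $\mathrm{def}(H)>\eta n$. By the Gallai--Edmonds (equivalently Berge--Tutte) structure theorem there is $S\subseteq V(H)$ such that $H-S$ has at least $|S|+\eta n$ odd components. A vertex of a component $C$ of $H-S$ has all but at most $|S|$ of its $\ge\alpha n$ neighbours inside $C$, so the components are forced to be small unless $|S|$ is large; combined with $\alpha>1/3$, this yields a linear number of \emph{singleton} components, whose union is an independent set $I$ with $|I|\ge\eta'n$ for some constant $\eta'>0$, every vertex of which sends all of its $\ge\alpha n$ edges into $S$. In particular $|S|\ge\alpha n$, and an easy counting gives $|W|<\alpha n$, where $W\coloneqq V(H)\setminus(S\cup I)$. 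Now the restriction of $G_{n,1}$ to the fixed set $I$ is a uniformly random matching on $I$, so a.a.s.\ it has $(1-o(1))|I|^2/(2n)$ edges, and the crucial point is that, because $|S|\ge\alpha n$ and $\alpha>\sqrt2-1$,
\[
\frac{|I|^2}{2n}\ \ge\ |I|-|S|+\Omega(n).
\]
Indeed, as a function of $|I|$ on $[0,(1-\alpha)n]$ the quantity $\tfrac{|I|^2}{2n}-|I|+|S|$ is decreasing, hence minimised at $|I|=(1-\alpha)n$, where it is at least $n\bigl(\tfrac{1}{2}(1-\alpha)^2-1+2\alpha\bigr)$, and $\tfrac{1}{2}(1-\alpha)^2-1+2\alpha$ vanishes exactly at the root $\alpha=\sqrt2-1$ of $\alpha^2+2\alpha-1$ and is strictly positive for $\alpha=(1+\epsilon)(\sqrt2-1)$. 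We may therefore use these matching edges to break $I$ into at most $|S|-\Omega(n)$ pieces, each either a single vertex of $I$ or a pair of $I$-vertices joined by an edge of $G_{n,1}$.

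\textbf{The main obstacle.} It then remains to thread these $I$-pieces, alternating with internally-vertex-disjoint paths that together cover $S\cup W$ and have both endpoints in $S$, into one Hamilton cycle. I would set this up as a Hamilton cycle problem in an auxiliary graph one part of which is the set of $\le|S|-\Omega(n)$ $I$-pieces --- each adjacent in $H$ to at least $\alpha n$ of the (at most $|S|$) vertices of $S$, and recall $|W|<\alpha n$ --- absorbing the vertices of $W$ and the surplus of $S$ into the interiors of the connecting paths using $H[S\cup W]$ and the as-yet-unused random edges, and then applying a Hall-/P\'osa-type argument. This is where the real work lies, and it is the reason the theorem is best possible: in the near-extremal regime ($H$ close to $K_{\alpha n}+\overline{K}_{(1-\alpha)n}$) the construction has essentially no slack --- almost every edge of $G_{n,1}$ inside $I$ must be used, and no vertex of $S$ may be wasted --- so the threading has to be carried out within the merely linear, tightly metered room that the factor $1+\epsilon$ provides, uniformly over all $H$ meeting the hypothesis.
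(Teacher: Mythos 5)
Your plan departs from the paper in two natural ways: the weak-pancyclicity reduction (Brandt--Faudree--Goddard together with Andr\'asfai--Erd\H{o}s--S\'os), where the paper instead builds cycles of every length directly via an absorbing path; and the use of the Gallai--Edmonds decomposition in the deficient case, where the paper constructs a bespoke partition by iterating augmenting paths (\cref{lem:partition}). The arithmetic you carry out is correct and hits the right threshold: the inequality $\tfrac{1}{2}(1-\alpha)^2-1+2\alpha>0$, equivalently $\alpha^2+2\alpha-1>0$, with root $\sqrt2-1$, is precisely the role played by the paper's \eqref{equa:tightbound}, and your conclusion that the number of $I$-pieces drops to at most $|S|-\Omega(n)$ after using the random edges inside $I$ is the crucial slack that the factor $1+\epsilon$ buys. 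The deduction that $|S|\geq\alpha n$, that there are $\Omega(n)$ singletons, and that $|W|<\alpha n$ (which uses $\alpha>2/5$) also checks out.

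The gap is that the proof stops exactly where the paper's work begins. The paragraph you yourself call \emph{the main obstacle} --- routing the $\le|S|-\Omega(n)$ $I$-pieces through internally-disjoint $(S,S)$-paths that together cover $S\cup W$, stitching them with the as-yet-unused random edges, all while spending only the linear slack available --- is named but not carried out, and a Hall/P\'osa phrase does not fill it. One must show that the needed path system actually exists in $H[S\cup W]$ together with the residual random edges, which is delicate: in the near-extremal case $W$ may be empty, $|S|$ may be barely above $\alpha n$, and essentially every $G_{n,1}$-edge inside $I$ must be used, so there is no redundancy to absorb mistakes, and this must hold uniformly over all $H$ meeting the hypothesis (not just the bipartite extremal example). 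That construction is exactly what occupies the paper's Steps~2--6: an absorbing path, several rounds of endpoint relocation between $B_1,B_2,C_1,C_2,R$, careful bookkeeping with the `unavailable' sets $D$ and $K$, and a final merging step --- several pages of argument. As written, your proposal contains no argument for this step, and without it the Hamiltonicity claim in the large-deficiency case is unproved.
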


This result is best possible.
Indeed, for every $\alpha<\sqrt{2}-1$, there exist graphs $H$ with $\delta(H)\geq\alpha n$ such that $H\cup G_{n,1}$ is not a.a.s.~Hamiltonian.
As we will discuss later (see \cref{sect:1factor}), the main extremal construction of $H$ for the lower bound is a complete unbalanced bipartite graph.
One key feature of this example is that $H$ does not contain a very large matching.
Indeed, when we further impose that $H$ contains an (almost) perfect matching, we can obtain the following result analogous to \cref{thm:2factor}.

\begin{theorem}\label{thm:matching2}
Let $\alpha=\omega((\log n/n)^{1/4})$, and let $H$ be an $n$-vertex graph with $\delta(H)\geq \alpha n$ which contains a matching $M$ which covers $n-o(\alpha^2n)$\COMMENT{Again, constant times $\alpha^2n$ would be enough.} vertices.
Then, a.a.s.~$H\cup G_{n,1}$ is pancyclic. 
\end{theorem}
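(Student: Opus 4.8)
The plan is to exploit the near-perfect matching $M$ in $H$ to reduce this setting to something essentially as good as the random $2$-factor case treated in \cref{thm:2factor}. The key observation is that a random perfect matching $G=G_{n,1}$, together with the fixed matching $M\sub H$, produces a union $M\cup G$ whose components are (apart from $o(\alpha^2 n)$ isolated vertices or trivial pieces coming from the vertices uncovered by $M$) vertex-disjoint paths and even cycles that alternate between $M$-edges and $G$-edges; a.a.s.\ almost all of these components are long. Concretely, I would first condition on $G=G_{n,1}$ and analyse the structure of $M\cup G$: standard results on the permutation/involution model show that a.a.s.\ $M\cup G$ has only $O(\log n)$ cycle components, all but $o(\alpha^2 n)$ vertices lie on paths/cycles of length $\omega(1)$, and in particular $M\cup G$ contains a collection of vertex-disjoint paths covering all but $o(\alpha^2 n)$ vertices, each path having its two endpoints joined (within the path) by a route using edges of $H$. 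The point of the $\omega((\log n/n)^{1/4})$ and the $o(\alpha^2 n)$ slack is that the number of ``bad'' vertices — those not covered by $M$, plus those on short components — is negligible compared to $\alpha^2 n$, which is the error term we can afford when invoking the absorbing/connecting machinery that presumably underlies \cref{thm:2factor}.

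Next I would run the same argument as for \cref{thm:2factor}, but feeding in $M\cup G$ in place of the random $2$-factor $G_{n,2}$. The structural features that proof needs of the random graph are: (i) it is $2$-regular on almost all vertices, so it supplies, for almost every vertex, a pair of ``free'' edges that the host graph $H$ does not control; (ii) it decomposes the vertex set into a bounded number of long cycles/paths. Property (ii) holds for $M\cup G$ by the first step; property (i) holds on the $n-o(\alpha^2 n)$ vertices covered by $M$ (each such vertex has one $M$-edge and one $G$-edge), and the $o(\alpha^2 n)$ exceptional vertices are absorbed using the minimum-degree condition $\delta(H)\ge\alpha n$ via the standard connecting/absorbing lemma: one reserves a small absorbing structure inside $H$ of size $\Theta(\alpha^2 n)$ (or uses the fact that every vertex has $\ge\alpha n$ neighbours in $H$ to splice bad vertices into long $M\cup G$-paths), then patches the long paths of $M\cup G$ together into a single Hamilton cycle, and finally shortcuts this cycle using chords in $H$ to realise every cycle length in $\{3,\dots,n\}$.

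The main obstacle, I expect, is handling the $o(\alpha^2 n)$ vertices not covered by $M$ (and the handful of vertices on short $M\cup G$-components) and showing they can be absorbed without damaging pancyclicity: these vertices have no guaranteed ``random'' edge at all, so every incidence to them must come from $H$, and one must argue that $\delta(H)\ge\alpha n$ together with the long-path structure of $M\cup G$ leaves enough room to insert each of them into the developing cycle while still keeping the full spectrum of cycle lengths available. A secondary technical point is proving the structural claim about $M\cup G$ with the right quantitative bounds — specifically that a.a.s.\ all but $o(\alpha^2 n)$ vertices lie on components of length $\omega(1)$ — which follows from a first-moment computation on the number of vertices in short alternating cycles of $M\cup G$, but needs to be done uniformly over the (adversarial) choice of $M$. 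Once these are in place, the pancyclicity conclusion follows exactly as in the proof of \cref{thm:2factor}, with the random $2$-factor there replaced by the near-$2$-regular graph $M\cup G$ here.
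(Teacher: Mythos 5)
Your proposal is correct and matches the paper's approach: the proof of \cref{thm:matching2} is carried out simultaneously with that of \cref{thm:2factor}, substituting $M\cup G_{n,1}$ for $G_{n,2}$ and using \cref{lem:fewcycles}\ref{item:fewcycles2}, which gives that $M\cup G_{n,1}$ a.a.s.\ has at most $\log^2 n$ cycles and at most $n/2-|M|+\log^2 n=o(\alpha^2 n)$ components in total, after which the absorbing-path construction and the iterative merging of paths and cycles proceed verbatim. One small correction to your narrative: since $G_{n,1}$ is a \emph{perfect} matching, every vertex — including the $o(\alpha^2 n)$ vertices left uncovered by $M$ — has exactly one $G$-edge, so these vertices are not ``edgeless''; they simply appear as endpoints of paths (or degenerate single-vertex paths) in $M\cup G_{n,1}$ and are threaded into the growing cycle by the same connecting step as every other path component, with no separate absorption scheme needed for them.
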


The rest of the paper is organised as follows.
In \cref{sect:prelim} we present our notation and some preliminary results which will be useful for us, and in \cref{sect:aux} we prove some properties of $G_{n,1}$ and $G_{n,2}$ regarding their edge distribution and component structure which are crucial for our proofs.
We devote \cref{sect:2factor} to proving \cref{thm:2factor,thm:matching2}, and defer the proof of \cref{thm:matching} to \cref{sect:1factor}.
Finally, we discuss some possible extensions in \cref{sect:concl}.


\section{Preliminaries}\label{sect:prelim}

\subsection{Notation}

For any $n\in\mathbb{Z}$, we denote $[n]\coloneqq\{i\in\mathbb{Z}:1\leq i\leq n\}$ and $[n]_0\coloneqq\{i\in\mathbb{Z}:0\leq i\leq n\}$.
In particular, $[0]=\varnothing$.
Given any $a,b,c\in\mathbb{R}$, we write $c=a\pm b$ if $c\in[a-b,a+b]$.
Whenever we use a hierarchy, the constants are chosen from right to left. 
To be more precise, when claiming that a statement holds for $0<a\ll b\leq1$, we mean that it holds for all $0<b\leq1$ and all $0<a\leq f(b)$, for some unspecified non-decreasing function $f$.
For our asymptotic statements we often use the standard $\bigO$ notation; when doing so, we always understand that the functions are non-negative.

Throughout this paper, the word \emph{graph} will refer to simple, undirected graphs.
Whenever we consider graphs with loops or parallel edges, we will call them \emph{multigraphs}.
Whenever we consider a (multi)graph $G$ on $n$ vertices, we implicitly assume that $V(G)=[n]$ (in particular, all graphs are labelled).

Given any (multi)graph $G=(V,E)$ and any two (not necessarily disjoint) sets $A,B\subseteq V$, we denote the (multi)set of edges of $G$ with both endpoints in $A$ by $E_G(A)$, and the (multi)set of edges with one endpoint in $A$ and one in $B$ by $E_G(A,B)$.
If $A=\{a\}$ is a singleton, for simplicity we will write $E_G(a,B)\coloneqq E_G(\{a\},B)$, and similarly in the rest of the notation.
We denote $e_G(A)\coloneqq|E_G(A)|$ and $e_G(A,B)\coloneqq|E_G(A,B)|$.
We write $G[A]\coloneqq(A,E_G(A))$ for the subgraph of $G$ induced by $A$.
If~$A$ and~$B$ are disjoint, we write $G[A,B]\coloneqq(A\cup B,E_G(A,B))$ for the bipartite subgraph of $G$ induced by $A$ and $B$.
Given two (multi)graphs $G$ and $H$, we write $G\cup H\coloneqq(V(G)\cup V(H),E(G)\cup E(H))$, and if $V(H)\subseteq V(G)$, we write $G\setminus H\coloneqq(V(G),E(G)\setminus E(H))$.
We denote $G-A\coloneqq G[V\setminus A]$.
We will write $A\triangle B\coloneqq (A\cup B)\setminus (A\cap B)$ for the symmetric difference of $A$ and $B$.
We will often abbreviate edges $e=\{x,y\}$ as $e=xy$; recall, however, that edges are sets of vertices, and will be used as such throughout.
Given any vertex $x\in V$, we let $d_G(x)\coloneqq|\{e\in E:x\in e\}|+|\{e\in E:e=xx\}|$ denote its \emph{degree} in $G$ (that is, loops count twice towards the degree of a vertex).
We write $\Delta(G)$ and $\delta(G)$ for the maximum and minimum vertex degrees in $G$, respectively.
We write $N_G(x)$ for the neighbourhood of $x$ in $G$, that is, the set of vertices $y\in V$ such that $xy\in E$.
Given any set $A\subseteq V$, we denote $N_G(A)\coloneqq\bigcup_{x\in A}N_G(x)$.
Given any set of edges $E'$, we write $V(E')\coloneqq\bigcup_{e\in E'}e$.

A \emph{path} $P$ can be seen as a set of vertices which can be labelled in such a way that there is an edge between any two consecutive vertices.
Equivalently, they can be defined by the corresponding set of edges.
We will often consider isolated vertices as a degenerate case of paths.
If the \emph{endpoints} of $P$ (that is, the first and last vertices, according to the labelling) are $x$ and $y$, we often refer to $P$ as an $(x,y)$-path.
We refer to all vertices of a path $P$ which are not its endpoints as \emph{internal} vertices.
The \emph{length} of a path is equal to the number of edges it contains (the same definition holds for cycles).
The \emph{distance} between two vertices $x,y$ in a (multi)graph $G$, denoted by $\dist_G(x,y)$, is the length of the shortest $(x,y)$-path $P\subseteq G$ (if there is no such path, the distance is said to be infinite).
Given any two sets of vertices $A,B\subseteq V(G)$, we define $\dist_G(A,B)\coloneqq\min_{x\in A,y\in B}\dist_G(x,y)$.

Whenever we consider asymptotic statements, we will use the convention of abbreviating a sequence of graphs $\{G_i\}_{i\in\mathbb{N}}$ such that $|V(G_i)|\to\infty$ by simply writing that $G$ is an $n$-vertex graph (and implicitly understanding that $n\to\infty$).
In this setting, we say that an event $\mathcal{E}$ holds \emph{asymptotically almost surely} (a.a.s.~for short) if $\mathbb{P}[\mathcal{E}]=1-o(1)$.
We will ignore rounding issues whenever this does not affect the arguments.

\subsection{The configuration model}

In order to study random regular graphs, one of the most useful tools is the \emph{configuration model}, first introduced by \citet{Bol80}, which gives a process that samples $d$-regular graphs uniformly at random.
In its simplest form, the process works as follows.
For each $i\in[n]$, consider a set of $d$ vertices $x_{i,1},\ldots,x_{i,d}$.
Then, choose a uniformly random perfect matching $M$ covering the set $\{x_{i,j}:i\in[n],j\in[d]\}$.
Now, generate a multigraph $G=G(M)=([n],E)$ where, for each edge $x_{i,j}x_{i',j'}\in M$, an edge $ii'$ is added to $E$ (if $i=i'$, this creates a loop).
For any $i\in[n]$, we will refer to $\{x_{i,j}:j\in[d]\}$ as the \emph{extended set} of~$i$.
Similarly, for any $A\subseteq[n]$, we will refer to $\{x_{i,j}:i\in A,j\in[d]\}$ as the \emph{extended set} of $A$.

Whenever we produce a (multi)graph $G$ following the process above, we will write $G\sim\mathcal{C}_{n,d}$.
We will write $M\sim\mathcal{C}^*_{n,d}$ for the perfect matching $M$ such that $G=G(M)$.
In order to easily distinguish both models, we will refer to $M\sim\mathcal{C}^*_{n,d}$ as a \emph{configuration}.
By abusing notation, we will also sometimes write $\mathcal{C}^*_{n,d}$ to denote the set of all configurations with parameters $n$ and $d$.
Observe that, in particular, for every $n$-vertex $d$-regular multigraph $G$, there exists at least one configuration $M\in\mathcal{C}_{n,d}^*$ such that $G=G(M)$.

Given any two configurations $M,M'\in\mathcal{C}^*_{n,d}$, we write $M\sim M'$ if there exist $u_1u_2, v_1v_2 \in M$ such that $M' = (M\setminus \{u_1u_2, v_1v_2\}) \cup \{u_1v_1, u_2v_2\}$.
The following lemma bounds the probability that certain variables on configurations deviate from their expectation (see, e.g.,~\cite[Lemma~4.3]{CEGKO19}).

\begin{lemma}\label{lem:martingale}
Let $d\in\mathbb{N}$ be fixed.
Let $c>0$ and let $X$ be a random variable on $\mathcal{C}^*_{n,d}$ such that, for every pair of configurations $M\sim M'$, we have $|X(M)-X(M')| \le c$.
Then, for all $t\in\mathbb{N}$, 
\[\mathbb{P}[|X - \mathbb{E}[X]| \ge t] \le 2\nume^{- \frac{t^2}{2ndc^2}}.\]
\end{lemma}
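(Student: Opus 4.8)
The statement to prove is \cref{lem:martingale}, a concentration inequality (Azuma--Hoeffding type) for random variables on the configuration model $\mathcal{C}^*_{n,d}$, where the Lipschitz condition is with respect to the switching relation $M\sim M'$.

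\medskip

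The plan is to set up an edge-exposure martingale on the random perfect matching $M$ and apply the Azuma--Hoeffding inequality. Recall that $M$ is a uniformly random perfect matching on the $nd$ points $\{x_{i,j}:i\in[n],j\in[d]\}$, which we can think of as generated sequentially: order the $nd$ points in some fixed way, and at each step take the first point not yet matched and choose its partner uniformly at random among all remaining unmatched points. This reveals $M$ in $nd/2$ rounds, one new edge per round. Let $\mathcal{F}_0\subseteq\mathcal{F}_1\subseteq\dots\subseteq\mathcal{F}_{nd/2}$ be the corresponding filtration, and set $Y_k\coloneqq\Exp[X\mid\mathcal{F}_k]$, so $Y_0=\Exp[X]$ and $Y_{nd/2}=X$. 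First I would verify that $(Y_k)$ is a martingale (immediate) and then bound the martingale differences $|Y_k-Y_{k-1}|$.

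\medskip

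The key step is to show $|Y_k-Y_{k-1}|\leq 2c$ for every $k$. The standard way to do this is a coupling/switching argument: condition on $\mathcal{F}_{k-1}$ (so the first $k-1$ edges of $M$ are fixed, say forming a partial matching $N$), and consider the point $u$ to be matched at round $k$. Conditionally, $u$'s partner $v$ is uniform over the set $R$ of currently unmatched points (other than $u$). For any two candidate partners $v,v'\in R$, one can pass from the matching ``$u$ matched to $v$, then complete randomly'' to ``$u$ matched to $v'$, then complete randomly'' by a single switch on the remaining random portion: if in the first completion $v'$ is matched to some $w$, then swapping $\{uv, v'w\}$ for $\{uv', vw\}$ (or $\{uv',vv'\}$-type degenerate cases handled similarly) gives a bijection between completions with $uv\in M$ and completions with $uv'\in M$, under which the configuration changes by exactly one application of $\sim$. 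Hence by hypothesis the value of $X$ changes by at most $c$ under this bijection, so $|\Exp[X\mid \mathcal{F}_{k-1}, uv\in M]-\Exp[X\mid \mathcal{F}_{k-1}, uv'\in M]|\leq c$ for all $v,v'$. Since $Y_{k-1}$ is a convex combination (over $v$) of the quantities $\Exp[X\mid\mathcal{F}_{k-1},uv\in M]$ and $Y_k$ equals one of them, we get $|Y_k-Y_{k-1}|\leq c$; being slightly more careful with the degenerate switch cases (where one of the two ``old'' edges is incident to $v$ itself) may cost a factor $2$, giving the safe bound $|Y_k-Y_{k-1}|\leq 2c$. Actually, chaining two switches at worst yields $2c$, which is what we want, since there are $nd/2$ steps and $(2c)^2\cdot (nd/2) = 2ndc^2$.

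\medskip

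Finally, apply the Azuma--Hoeffding inequality: for a martingale $(Y_k)_{k=0}^{N}$ with $|Y_k-Y_{k-1}|\leq c_k$, one has $\mathbb{P}[|Y_N-Y_0|\geq t]\leq 2\exp(-t^2/(2\sum_k c_k^2))$. Here $N=nd/2$ and $c_k=2c$, so $\sum_k c_k^2 = (nd/2)(2c)^2 = 2ndc^2$, yielding $\mathbb{P}[|X-\Exp[X]|\geq t]\leq 2\exp(-t^2/(2ndc^2))$, as claimed. The main obstacle is the switching/coupling argument bounding the martingale differences: one must carefully construct the measure-preserving bijection between completions of the partial matching and track that it corresponds to a bounded number (one, or at worst two) of $\sim$-moves, including the degenerate cases where the switch involves an edge incident to the point being re-matched. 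Everything else is routine. (Alternatively, one could cite a general-purpose statement such as \cite[Lemma~4.3]{CEGKO19} directly, but the above is the self-contained argument.)
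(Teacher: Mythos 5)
The paper does not actually prove \cref{lem:martingale}; it is stated with a pointer to \cite[Lemma~4.3]{CEGKO19}. Your proposed self-contained proof — an edge-exposure martingale on the random configuration $M$ (reveal one matching edge per round, $nd/2$ rounds), bound the martingale increments via a switching coupling, and finish with Azuma--Hoeffding — is exactly the standard route that such a citation invokes, and the skeleton is sound. Two small corrections are in order, though. First, the ``degenerate'' cases you worry about do not occur: for $v\neq v'$, the partner $w$ of $v'$ in a completion containing $uv$ is automatically distinct from $u$ and from $v$ (since $v$ is already matched to $u$), so one clean switch always relates the two completions and you get the clean bound $|Y_k-Y_{k-1}|\le c$, no factor $2$ needed. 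This yields the \emph{stronger} bound $2\nume^{-t^2/(ndc^2)}$, from which the stated $2\nume^{-t^2/(2ndc^2)}$ follows a fortiori. Second, your final arithmetic as written is off: with $c_k=2c$ and $N=nd/2$, the Azuma form you quote gives $2\exp\bigl(-t^2/(2\sum_k c_k^2)\bigr)=2\exp\bigl(-t^2/(2\cdot 2ndc^2)\bigr)=2\exp\bigl(-t^2/(4ndc^2)\bigr)$, not $2\exp\bigl(-t^2/(2ndc^2)\bigr)$, so your factor-$2$ safety margin would in fact \emph{lose} the claimed constant. Using the correct $c_k=c$ fixes this.
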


Whenever sampling $G\sim\mathcal{C}_{n,d}$, the process described above may yield a multigraph with both loops and parallel edges.
(Of course, this does not apply to the case $d=1$, where we simply generate a uniformly random perfect matching.)
However, upon conditioning on the event that $G$ is a simple graph, such a graph is a uniformly random $d$-regular graph.
The following is by now a well-established fact (see, e.g.,~\cite[section~2.2]{Wor99}):

\begin{lemma}\label{lem:simple}
Let $d\in\mathbb{N}$ be fixed, and let $G\sim\mathcal{C}_{n,d}$.
If $n$ is sufficiently large, then 
\[\mathbb{P}[G\text{ is simple}]\geq \nume^{-d^2/4}.\]
\end{lemma}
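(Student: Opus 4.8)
The plan is to use the classical method of moments. Write $L=L(G)$ for the number of loops of $G\sim\mathcal{C}_{n,d}$ and $D=D(G)$ for the number of (unordered) pairs of parallel edges, so that $G$ is simple precisely when $L=D=0$. I would first compute the joint falling factorial moments of $L$ and $D$, writing $(x)_a\coloneqq x(x-1)\cdots(x-a+1)$. For a single loop at a vertex $i$, two of the $d$ points in its extended set must be matched to one another; there are $\binom{d}{2}$ choices, and the probability that a fixed pair among the $nd$ points lies in a uniformly random perfect matching is $1/(nd-1)$, so $\mathbb{E}[L]=n\binom{d}{2}/(nd-1)\to(d-1)/2$. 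A similar but slightly longer count — choose two distinct vertices, two points in each of their extended sets, and one of the two ways of pairing up these four points across the two vertices, and note that a fixed set of two disjoint pairs lies in a random matching with probability $1/((nd-1)(nd-3))$ — gives $\mathbb{E}[D]\to(d-1)^2/4$. Set $\lambda_1\coloneqq(d-1)/2$ and $\lambda_2\coloneqq(d-1)^2/4$. More generally, for fixed $a,b\in\mathbb{N}$ one sums over all ways of placing $a$ loops and $b$ parallel-edge pairs on pairwise distinct vertices; the terms in which the chosen substructures share a vertex (or a point) are of lower order, and the leading term factorises, so $\mathbb{E}[(L)_a(D)_b]\to\lambda_1^a\lambda_2^b$.

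Since the Poisson law is determined by its moments, it follows that $(L,D)$ converges in distribution to a pair of independent Poisson random variables with parameters $\lambda_1$ and $\lambda_2$. In particular,
\[
  \mathbb{P}[G\text{ is simple}]=\mathbb{P}[L=0,\,D=0]\;\longrightarrow\;\nume^{-\lambda_1-\lambda_2}=\nume^{-(d^2-1)/4}.
\]
Since $\nume^{-(d^2-1)/4}>\nume^{-d^2/4}$, for all sufficiently large $n$ we obtain $\mathbb{P}[G\text{ is simple}]\geq\nume^{-d^2/4}$, as required. (When $d=1$ the statement is degenerate: a uniformly random perfect matching on $\{x_{i,1}:i\in[n]\}$ contains neither a loop nor a parallel edge, so the probability equals $1$, consistently with the displayed formula.)

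The only delicate point is the verification that all higher joint factorial moments converge to those of the product Poisson distribution: one must check that the ``overlapping'' contributions (two loops at a common vertex, a loop meeting a double edge, two double edges sharing a vertex, etc.) are $\bigO(1/n)$, and that reserving a bounded number of points for the chosen substructures alters the relevant matching probabilities only by a factor $1+o(1)$, uniformly for fixed $a$ and $b$. I expect no genuine obstacle beyond this bookkeeping, the result being classical (see, e.g.,~\cite{Bol80} and the survey~\cite{Wor99}).
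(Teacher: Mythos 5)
Your argument is correct and is precisely the classical one from the literature. The paper itself does not prove \cref{lem:simple}; it simply records it as a well-established fact and refers to \cite[Section~2.2]{Wor99}, which in turn relies on the Poisson method-of-moments computation you have reconstructed: loops and double edges are asymptotically independent Poisson with means $(d-1)/2$ and $(d-1)^2/4$, giving $\mathbb{P}[G\text{ simple}]\to\nume^{-(d^2-1)/4}$, which exceeds $\nume^{-d^2/4}$. Your computations of $\lambda_1$ and $\lambda_2$, the handling of the degenerate case $d=1$, and the final comparison are all correct; the only thing left implicit is the routine verification that overlapping configurations contribute $\bigO(1/n)$ to the joint factorial moments, which you correctly flag and which is indeed standard bookkeeping.
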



\section{Properties of random regular graphs of low degree}\label{sect:aux}

We will need to have bounds on the number of components of random $2$-factors (note that every $2$-regular graph is a union of vertex-disjoint cycles), as well as the number of cycles in the union of a (not necessarily spanning) matching and a random perfect matching.

\begin{lemma}\label{lem:fewcycles}
The following statements hold.
\begin{enumerate}[label=$(\mathrm{\roman*})$]
    \item\label{item:fewcycles1} A.a.s.~the number of components of $G_{n,2}$ is at most $\log^2 n$.
    \item\label{item:fewcycles2} Let $M$ be any (not necessarily perfect) matching on $[n]$.
    Then, a.a.s.~$M\cup G_{n,1}$ contains at most $\log^2n$ cycles, and its number of components is at most $n/2-|M|+\log^2n$.
\end{enumerate}
\end{lemma}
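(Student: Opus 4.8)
The plan is to use the configuration model together with the concentration inequality of \cref{lem:martingale}. For part \ref{item:fewcycles1}, observe that if $G\sim\cC_{n,2}$ then the number of cycles of the multigraph $G$ (counting loops and digons as cycles) only changes by a bounded amount under a single switch $M\sim M'$: replacing two edges $u_1u_2,v_1v_2$ of the configuration by $u_1v_1,u_2v_2$ can merge two cycles into one or split one cycle into two, so the number of cycles changes by at most $1$. Hence \cref{lem:martingale} applies with $c=1$, $d=2$, giving concentration of the cycle count around its expectation with window $t=\omega(\sqrt{n}\log n)$, say. It remains to bound $\Exp[X]$: a standard first-moment computation in the configuration model shows that the expected number of cycles of length $k$ is $O(1/k)$ uniformly (each fixed cyclic arrangement of $k$ labelled vertices is realised with probability $\Theta(n^{-k})$ and there are $\binom{n}{k}\cdot\tfrac{(k-1)!}{2}\cdot 2^k$ such arrangements, times the contribution from the remaining $(2n-2k)$ points), so summing over $k\in[n]$ gives $\Exp[X]=O(\log n)$. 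Combining, a.a.s.\ $X\le O(\log n)\cdot(1+o(1))\le\log^2 n$. Finally, by \cref{lem:simple} the conditional probability that $G$ is simple is bounded below by a constant, so an a.a.s.\ event for $\cC_{n,2}$ remains a.a.s.\ for $G_{n,2}$; and the cycle count of the simple graph equals the component count.

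For part \ref{item:fewcycles2}, fix the matching $M$ and sample $N\sim\cC^*_{n,1}=G_{n,1}$. The graph $M\cup N$ has maximum degree $2$, so each component is a path or a cycle. The key observation is that $N$ itself contributes no cycles (it is a matching), and each cycle of $M\cup N$ alternates between $M$-edges and $N$-edges; so the number of cycles is exactly the number of cycles in the multigraph where we contract each $M$-edge to a vertex and keep the $N$-edges — equivalently, it is at most the number of short cycles one would count by a similar first-moment bound. Concretely, I would again view $X\coloneqq$ (number of cycles of $M\cup N$) as a function of the configuration $N\sim\cC^*_{n,1}$, check that a single switch changes $X$ by at most $1$ (the same merge/split dichotomy), apply \cref{lem:martingale} with $d=1$, $c=1$, and bound $\Exp[X]$ by a first-moment calculation: the expected number of cycles of $M\cup N$ using exactly $j$ edges of $M$ (hence $2j$ vertices and $j$ edges of $N$) is $O(1/j)$, uniformly in $M$, since there are at most $\binom{|M|}{j}\cdot\tfrac{(j-1)!}{2}\cdot 2^{j}$ ways to arrange $j$ chosen $M$-edges into a cyclic pattern and each is completed by $N$ with probability $\Theta(n^{-j})$, while $\binom{|M|}{j}\le (2n)^j/j!$. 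Summing over $j\le n/2$ gives $\Exp[X]=O(\log n)$, hence a.a.s.\ $X\le\log^2 n$. For the component count: $M\cup N$ has $2n$ endpoint-incidences among $n$ vertices from $N$ and $2|M|$ from $M$; a graph with maximum degree $2$ on $n$ vertices that is a disjoint union of paths and cycles has (number of components) $=$ $n-e$ $+$ (number of cycles), where $e=|E(M\cup N)|$. Since $N$ is a perfect matching, $|E(N)|=n/2$, and because $M$ and $N$ are both matchings the edge sets are disjoint except that an edge of $M$ could coincide with an edge of $N$; in the worst case $|E(M\cup N)|\ge n/2$, so (number of components) $\le n - n/2 + (\text{number of cycles}) = n/2 + (\text{number of cycles})$. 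Hmm — this gives $n/2+\log^2 n$, but we want $n/2 - |M| + \log^2 n$. The correction is that each of the $|M|$ edges of $M$ either lies inside a path/cycle of $N\cup M$ (reducing the component count by $1$ relative to $N$ alone, unless it closes a cycle) — more carefully, $|E(M\cup N)| = n/2 + |M| - |M\cap N| \ge n/2 + |M| - (\text{number of cycles})$ roughly, since each coincidence $M\cap N$ is a digon, i.e.\ a $2$-cycle, already counted. Plugging $|E(M\cup N)|\ge n/2+|M|-(\text{number of cycles})$ into (number of components) $= n - |E(M\cup N)| + (\text{number of cycles})$ yields (number of components) $\le n/2 - |M| + 2(\text{number of cycles}) \le n/2 - |M| + O(\log^2 n)$, and absorbing constants (or redoing the bookkeeping slightly more carefully so that digons are not double-counted) gives the claimed bound $n/2-|M|+\log^2 n$.

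The main obstacle I anticipate is not the concentration step — that is a routine application of \cref{lem:martingale} once one checks the bounded-difference property, which is immediate from the merge/split behaviour of a single switch — but rather the first-moment estimate for $\Exp[X]$, where one must be careful to get the $O(1/k)$ (resp.\ $O(1/j)$) decay uniformly so that the sum is $O(\log n)$ rather than $O(n)$; in the perturbed case \ref{item:fewcycles2} this requires tracking the interaction between the fixed matching $M$ and the random matching $N$, in particular handling the (negligible) contribution of coincidences $M\cap N$ and the alternating structure of cycles. A secondary bookkeeping point is the exact component-count identity in \ref{item:fewcycles2}, where one must account for edges of $M$ that are also edges of $N$ (digons) so as not to over- or under-count components; this is purely combinatorial and deterministic. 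Passing from the configuration model to $G_{n,d}$ for $d=2$ is handled verbatim by \cref{lem:simple}, and for $d=1$ there is nothing to do since $\cC_{n,1}^*$ already is a uniform perfect matching.
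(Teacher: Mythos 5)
Your route (first moment estimate on cycle counts in the configuration model, then pass to the simple graph via \cref{lem:simple}) is genuinely different from the paper's: the paper reveals the configuration one edge at a time, choosing the exposure order so that each connected component is completed before the next one is started, so the number of components is a sum of indicators $X_i$ with $\Pr[X_i=1]=\tfrac{1}{2n-2i+1}$; this gives $\mathbb{E}[X]\le 2\log n$ directly and bypasses any enumeration of cycle structures. Your first-moment computation can be made to work (it is the textbook short-cycle count in the pairing model), but two parts of your write-up, as stated, do not.

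First, the concentration step is not merely a ``routine application of \cref{lem:martingale}'' — it is vacuous here. With $d\in\{1,2\}$ and $c=1$, \cref{lem:martingale} gives $\Pr[|X-\mathbb{E}[X]|\ge t]\le 2\nume^{-t^2/(2nd)}$, which is nontrivial only for $t=\omega(\sqrt{n})$. Since $\mathbb{E}[X]=\Theta(\log n)$ and the target threshold is $\log^2 n$, you need concentration on a window of size $O(\log^2 n)$, which \cref{lem:martingale} cannot provide; the line ``a.a.s.\ $X\le O(\log n)\cdot(1+o(1))\le\log^2 n$'' therefore does not follow from the bound you established. The fix is simply Markov's inequality: $\Pr[X\ge\log^2 n]\le\mathbb{E}[X]/\log^2 n=O(1/\log n)=o(1)$. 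This is in fact what the paper does. The bounded-difference machinery should be dropped from this lemma entirely.

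Second, in part~\ref{item:fewcycles2} your first-moment estimate as written does not give $O(1/j)$. You bound $\binom{|M|}{j}\le (2n)^j/j!$; feeding that into your own expression gives $\frac{(2n)^j}{j!}\cdot\frac{(j-1)!}{2}\cdot 2^j\cdot\Theta(n^{-j})=\Theta(4^j/j)$, which is exponentially large. You need the sharp bound $\binom{|M|}{j}\le\binom{n/2}{j}\le (n/2)^j/j!$ (since $M$ is a matching in an $n$-vertex graph, $|M|\le n/2$), which yields $\Theta(1/j)$ and hence $\mathbb{E}[X]=O(\log n)$. Finally, your component-count bookkeeping can be streamlined: for a disjoint union of paths and cycles the number of components equals $(\text{vertices})-(\text{edges})+(\text{cycles})$, and $|E(M\cup G_{n,1})| = n/2 + |M| - |M\cap E(G_{n,1})|$, so the component count is exactly $n/2-|M|+|M\cap E(G_{n,1})|+c$. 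You then need $|M\cap E(G_{n,1})|+c\le\log^2 n$ a.a.s., which follows from Markov since both summands have expectation $O(\log n)$ (for the first, $\mathbb{E}[|M\cap E(G_{n,1})|]=|M|/(n-1)\le 1$); your ``$+2(\text{number of cycles})$'' step overcounts and the $O(\cdot)$ in ``$n/2-|M|+O(\log^2 n)$'' should be removed.
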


\begin{proof}
We are going to generate a multigraph $G\sim\mathcal{C}_{n,2}$ according to the configuration model (for the proof of \ref{item:fewcycles2}, this will be conditioned on certain edges being present).
For this, it is important to note that a uniformly random perfect matching on any set $A$ of $2n$ vertices (that is, a configuration $M\sim\mathcal{C}_{n,2}^*$) can be obtained iteratively by choosing $n$ edges in $n$ steps.
For step~$i$, let $B_i$ be the set of all vertices which are covered by the edges of the first $i-1$ steps.
Then, choose any vertex $x_i\in A\setminus B_i$, and choose a neighbour $y_i\in A\setminus(B_i\cup\{x_i\})$ for $x_i$ uniformly at random.
The choice of $x_i$ in each step of this process can be made arbitrarily.
If we generate a random perfect matching on $2n$ vertices conditioned on $m$ given pairs being present in this matching, we can follow the process above starting at step $m+1$, assuming that the given $m$ edges were chosen in the first $m$ steps of the process.
For our proofs, we are going to generate a uniformly random configuration following this process.

For each $j\in[n]$, let $A_j$ be the extended set of $j$, and let $A\coloneqq\bigcup_{j\in[n]}A_j$.
Consider first the proof of \ref{item:fewcycles1}.
In the $i$-th step, the choice of the vertex $x_i$ will be made as follows:
if $i\geq2$ and for the unique $j\in[n]$ such that $y_{i-1}\in A_j$ we have that $|A_j\cap B_i|=1$, then let $x_i$ be the unique vertex in $A_j\setminus\{y_{i-1}\}$;
otherwise\COMMENT{for all $j\in[n]$ we have that $|A_j\cap B_i|$ is even}, let $x_i\in A\setminus B_i$ be arbitrary.
Roughly speaking, this choice of $x_i$ at each step means that we are revealing the edges of $G\sim\mathcal{C}_{n,2}$ in such a way that we reveal all edges of each connected component of $G$ before moving on to the next one.
In particular, we only start a new component at step $i+1$ when the randomly chosen vertex $y_{i}$ lies in the unique $A_j$ where exactly one vertex had already been picked\COMMENT{Note this vertex might be $x_{i}$, this has already been picked when $y_{i}$ is.}.
Whenever this happens, we say that the process has \emph{finished} a component at step $i$.

For each $i\in[n]$, let $X_i$ be an indicator random variable which takes value $1$ whenever the process finishes a component at step $i$, and $0$ otherwise.
Observe that, since $y_i$ is chosen uniformly at random, for each $i\in[n]$ we have that\COMMENT{Even more, the variables are independent, so if we wanted to obtain a tighter result we could use Chernoff bounds.}
\[\mathbb{P}[X_i=1]=\frac{1}{2n-2i+1}.\]
The total number of components $X$ of $G$ is then given by the sum of these indicator variables.
In particular, for $n$ sufficiently large it follows that\COMMENT{We have
\[\mathbb{E}[X]=\sum_{i=1}^n\frac{1}{2n-2i+1}=\sum_{i=1}^n\frac{1}{2i-1}\leq1+\sum_{i=2}^{2n}\frac{1}{i}\leq1+\log2+\log n\leq 2\log n.\]}
\[\mathbb{E}[X]=\sum_{i=1}^n\frac{1}{2n-2i+1}\leq2\log n.\]
The claim now follows immediately by Markov's inequality\COMMENT{We have that
\[\mathbb{P}[X\geq\log^2n]\leq\frac{\mathbb{E}[X]}{\log^2n}\leq\frac{2\log n}{\log^2n}=o(1).\]} and \cref{lem:simple}.

Consider now the proof of \ref{item:fewcycles2}.
First, extend the given matching $M$ into a perfect matching $M'$ arbitrarily.
For each $j\in[n]$, let $\sigma(j)$ be the unique $k\in[n]$ such that $\{j,k\}\in M'$.
Now, consider the extended set $A$ and, for each $\{j,k\}\in M'$, add the pair $x_{j,1}x_{k,1}$ to a matching $\tilde{M}$ on $A$, so we have that $G(\tilde{M})=M'$.
We are going to obtain a configuration $M''\sim\mathcal{C}_{n,2}^*$ conditioned on containing this matching $\tilde{M}$.
Observe that, since exactly one point in the extended set of each vertex has been covered by $\tilde{M}$, a random matching on the uncovered points corresponds to a random perfect matching on~$[n]$.
In particular, $G(M'')$ may contain some parallel edges (which correspond to `isolated' edges in $M'\cup G_{n,1}$), but it cannot contain any loops.

For each $i\in[n]\setminus[n/2]$, the choice of the vertex $x_i$ will be made as follows: if $i\geq n/2+2$ and for the unique $j\in[n]$ such that $y_{i-1}\in A_j$ we have that $|A_{\sigma(j)}\cap B_i|=1$, let $x_i$ be the unique vertex in $A_{\sigma(j)}\setminus B_i$; otherwise, choose $x_i$ arbitrarily.
Similarly to the proof of \ref{item:fewcycles1}, this allows us to count the number of components of $G(M'')$ (and, thus, of $M'\cup G_{n,1}$).
Note that we only start a new component at step $i+1$ if the randomly chosen $y_{i}$ lies in the unique (not completely covered) $A_j$ such that $A_{\sigma(j)}$ was covered before the choice of $y_{i}$.
Whenever this happens, we say that the process has finished a component at step $i$.

For each $i\in[n]\setminus[n/2]$, let $Y_i$ be a random variable which takes value $1$ whenever the process finishes a component at step $i$, and $0$ otherwise, and let $Y$ be the number of components of $G(M'')$.
As before, since $y_i$ is chosen uniformly at random, for each $i\in[n]\setminus[n/2]$ we have that
\[\mathbb{P}[Y_i=1]=\frac{1}{2n-2i+1},\]
and it follows that a.a.s.~$Y\leq\log^2n$.
Each of these components is either a cycle or an isolated pair of parallel edges.
Finally, $M\cup G_{n,1}$ can be obtained by contracting parallel edges into a single edge and then removing the edges of $M'\setminus(M\cup G_{n,1})$ from $G(M'')$, and this never creates any new cycles, so the bound on the number of cycles follows.

In order to bound the number of components of $M\cup G_{n,1}$, note that the deletion of each edge of $M'\setminus(M\cup G_{n,1})$ may create at most one new component.
Thus, a.a.s.~the number of components of $M\cup G_{n,1}$ is at most $\log^2n+(n/2-|M|)$.
\end{proof}

\begin{remark}\label{rem:fewcycles}
The proof of \cref{lem:fewcycles}\ref{item:fewcycles2} gives the following simple bound:
if $M$ is any (not necessarily perfect) matching on $[n]$, then a.a.s.~$|E(M)\cap E(G_{n,1})|\leq\log^2n$.
\end{remark}

The following edge-distribution property of $G_{n,1}$ and $G_{n,2}$ will also be useful for us.

\begin{lemma}\label{lem:triangsand3paths}
Let $\epsilon>0$ and $k=k(n)\leq n^3$\COMMENT{Actually true for any polynomial, cubic is enough for our proofs.}.
For each $i\in[k]$, let $\alpha_i=\alpha_i(n)$ and $\beta_i=\beta_i(n)$ be such that $\alpha_i\beta_i=\omega((\log n/n)^{1/2})$ and let $A_i,B_i\subseteq[n]$ be two not necessarily disjoint sets of vertices with $|A_i|\geq\alpha_in$ and $|B_i|\geq\beta_in$.
Let $G=G_{n,1}$ or $G=G_{n,2}$.
Then, a.a.s., for every $i\in[k]$, $A_i$ contains at least $(1-\epsilon)\alpha_i\beta_in$ vertices $z$ such that $N_G(z)\cap B_i\neq\varnothing$.
\end{lemma}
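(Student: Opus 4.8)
The plan is to use the configuration model together with the concentration bound in \cref{lem:martingale}. Fix $i\in[k]$ and write $A=A_i$, $B=B_i$, $\alpha=\alpha_i$, $\beta=\beta_i$. Let $Z=Z_i$ denote the number of vertices $z\in A$ with $N_G(z)\cap B\neq\varnothing$; equivalently, $|A|-Z$ counts the vertices of $A$ whose entire (extended) set of edges lands outside $B$. First I would compute $\Exp[Z]$ (over $M\sim\mathcal{C}^*_{n,d}$) and show it is at least $(1-\epsilon/2)\alpha\beta n$. For a fixed $z\in A$, the probability that none of its $d$ points is matched to a point in the extended set of $B$ is, by a short direct calculation in the configuration model, $\prod_{j=0}^{d-1}\bigl(1-\tfrac{d|B|}{nd-2j-1}\bigr)\leq (1-|B|/n)^d + O(1/n) \le 1 - d|B|/n + O(|B|^2/n^2)$. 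Hence $\mathbb{P}[N_G(z)\cap B\neq\varnothing]\geq d\beta - O(\beta^2) \ge (1-\epsilon/4)\cdot d\beta \ge (1-\epsilon/4)\beta$ for $n$ large (using $d\ge1$), and summing over $z\in A$ gives $\Exp[Z]\ge(1-\epsilon/4)\alpha\beta n$, which is $\omega((n\log n)^{1/2})$ by the hypothesis $\alpha\beta=\omega((\log n/n)^{1/2})$.

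Next I would apply \cref{lem:martingale}. The variable $Z$ is $1$-Lipschitz with respect to the switching relation $\sim$ on $\mathcal{C}^*_{n,d}$: replacing two edges $u_1u_2,v_1v_2$ of a configuration by $u_1v_1,u_2v_2$ changes the adjacency of at most two vertices of $[n]$, hence changes the indicator $[N_G(z)\cap B\neq\varnothing]$ for at most $O(1)$ values of $z$ (with $d$ fixed, each of the four endpoints $u_1,u_2,v_1,v_2$ lies in the extended set of one vertex, so at most four indicators move). Taking $c=O(1)$ and $t=(\epsilon/4)\alpha\beta n$, \cref{lem:martingale} gives $\mathbb{P}[Z<(1-\epsilon/2)\alpha\beta n]\le 2\exp(-\Omega(\alpha^2\beta^2 n^2 / n))=2\exp(-\Omega(\alpha^2\beta^2 n))$. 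Since $\alpha^2\beta^2 n = \omega(\log n)$, this probability is $o(n^{-3})$, so a union bound over the at most $k\le n^3$ indices $i$ gives that a.a.s., for every $i$, $Z_i\ge(1-\epsilon/2)\alpha_i\beta_i n\ge(1-\epsilon)\alpha_i\beta_i n$. Finally, since we worked in $\mathcal{C}_{n,d}$, I would transfer to $G_{n,d}$ using \cref{lem:simple}: the failure probability $o(1)$ in the configuration model, divided by $\mathbb{P}[G\text{ simple}]\ge \nume^{-d^2/4}$ (a constant, for $d\in\{1,2\}$), remains $o(1)$ conditioned on simplicity. For $d=1$ no conditioning is needed since $\mathcal{C}_{n,1}$ already produces a uniform perfect matching.

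The only slightly delicate point is making sure the individual concentration bound beats $n^{-3}$ so that the union bound over $k\le n^3$ events survives: this is exactly where the hypothesis $\alpha_i\beta_i=\omega((\log n/n)^{1/2})$ is used, since it is equivalent to $\alpha_i^2\beta_i^2 n=\omega(\log n)$, giving $\exp(-\Omega(\alpha_i^2\beta_i^2 n))=n^{-\omega(1)}$. Everything else — the first-moment computation and the Lipschitz verification — is routine for fixed $d$; note that the argument in fact works for any fixed $d\in\mathbb{N}$, not just $d\in\{1,2\}$, and for $k$ bounded by any fixed polynomial in $n$.
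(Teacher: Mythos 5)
Your proposal is correct and follows essentially the same route as the paper: compute the first moment in the configuration model, apply the edge-switching Lipschitz concentration bound (\cref{lem:martingale}) with an $O(1)$ Lipschitz constant, union bound over the $k\le n^3$ indices using $\alpha_i^2\beta_i^2 n=\omega(\log n)$, and transfer to $G_{n,d}$ via \cref{lem:simple}. The only cosmetic difference is that your first-moment calculation tracks the factor $d$, whereas the paper deliberately uses the weaker uniform bound $\mathbb{P}[z\in Z]\ge(2|B_i|-1)/(2n-1)\ge(1-\epsilon/2)\beta_i$ valid for both $d=1$ and $d=2$.
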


\begin{proof}
We will show how to prove the statement when $G=G_{n,2}$; the other case can be shown in exactly the same way, but avoiding the reference to \cref{lem:simple}.

Let $G'\sim\mathcal{C}_{n,2}$.
Let $\mathcal{E}$ be the event that the statement holds for $G'$.
First, fix some $i\in[k]$, let $Z\coloneqq\{z\in A_i:N_{G'}(z)\cap B_i\neq\varnothing\}$, and let $X\coloneqq|Z|$.
By using the configuration model, for each $z\in A_i$ and $n$ sufficiently large we have that\COMMENT{Consider the configuration model, i.e., we generate a uniformly random perfect matching on the extended set of $[n]$.
Each vertex is extended into two for the configuration model.
Let $z_1$ and $z_2$ conform the extended set of $z$.
The event that we are interested in is the event that $z_1$ or $z_2$ are joined to vertices of the extended set of $B_i$, which has size $2|B_i|$.
The probability that $z_1$ satisfies this is at least $(2|B_i|-1)/(2n-1)\geq(2|B_i|-1)/(2n)$.
Of course, if this does not hold for $z_1$, then it can still hold for $z_2$, so the probability is even larger (roughly twice as large). 
However, this is the best we can do for $G_{n,1}$ (in this case, the extended set of $B_i$ has size $|B_i|$ and the bound we get on the probability is $(|B_i|-1)/(n-1)\geq(|B_i|-1)/n\geq(1-\epsilon/2)\beta_i$), so we just use this bound here.} 
\[\mathbb{P}[z\in Z]\geq\frac{2|B_i|-1}{2n-1}\geq(1-\epsilon/2)\beta_i,\]
which implies that $\mathbb{E}[X]\geq(1-\epsilon/2)\alpha_i\beta_in$.

Now observe that any random variable on $d$-regular multigraphs obtained according to the configuration model can also be seen as a random variable on uniformly random configurations.
In particular, since any pair of configurations $M\sim M'$ are equal except for their edges at four vertices, it follows that, given any two configurations $M\sim M'$, we have $|X(M)-X(M')|\leq4$\COMMENT{This is just the trivial bound: we are counting edges and, since $M\sim M'$, only four edges change between one configuration and the other. We could be more careful and prove a better bound, but we do not care.}.
Thus, by \cref{lem:martingale} we conclude that $\mathbb{P}[X<(1-\epsilon)\alpha_i\beta_in]\leq \nume^{-\Omega(\alpha_i^2\beta_i^2n)}$\COMMENT{We have that
\[\mathbb{P}[X<(1-\epsilon)\alpha_i\beta_in]\leq\mathbb{P}[|X-\mathbb{E}[X]|\geq(\epsilon/2)(1-\epsilon/2)\alpha_i\beta_in]\leq2\nume^{-\frac{(\epsilon/2)^2(1-\epsilon/2)^2\alpha_i^2\beta_i^2n^2/16}{2n\cdot2\cdot16}}=2\nume^{-(\epsilon/2)^2(1-\epsilon/2)^2\alpha_i^2\beta_i^2n/2^{10}}.\]}.

By a union bound over all $i\in[k]$ and the bound on $\alpha_i\beta_i$ in the statement, we conclude that
\[\mathbb{P}[\mathcal{E}]\geq1-\sum_{i=1}^k\nume^{-\Omega(\alpha_i^2\beta_i^2n)}=1-o(1).\]
Finally, by applying \cref{lem:simple}, we have that the statement holds a.a.s.~for $G=G_{n,2}$.
\end{proof}


\section{Randomly perturbing graphs by a \texorpdfstring{$2$}{2}-factor}\label{sect:2factor}

We can now prove \cref{thm:2factor,thm:matching2} simultaneously.
We note that we believe the bound on $\delta(H)$ in the statements is far from optimal and, thus, we make no effort to optimise the constants throughout.

We will let $G=G_{n,2}$ or $G=G_{n,1}$ and want to show that $H\cup G$ is pancyclic.
Our general strategy will be to first construct a special path $P$ which can be used to `absorb' vertices into a given cycle.
To be more precise, we will set aside an arbitrary set of vertices of a suitable size and then construct a path $P$ which avoids this set of vertices.
Furthermore, we will ensure that each of the vertices set aside forms a triangle with one of the edges of $P$ (and each of the vertices does this with a different edge).
Then, by replacing the corresponding edge of $P$ by the path of length $2$ that forms the triangle, we can incorporate each of the vertices set aside into the path (thus `absorbing' the vertex). 
This will allow us to have some control over the length of the cycles which we can produce.
Once we have constructed the special path $P$, we will show that we can find an `almost' spanning cycle $C$ with $P\subseteq C$ which avoids the vertices that can be absorbed with $P$. 
We can then use the cycle $C$, together with the set of vertices that we can absorb into $P$, to find cycles of all lengths larger than that of $C$.
A similar strategy using $P$ will allow us to find all shorter cycles.

\begin{proof}[Proof of \cref{thm:2factor,thm:matching2}]
Let $\alpha=\omega((\log n/n)^{1/4})$, and let $G=G_{n,2}$ or $G=G_{n,1}$, depending on which of the statements we want to prove.
Condition on the event that the statements of \cref{lem:fewcycles,lem:triangsand3paths} hold, which occurs a.a.s., where we apply \cref{lem:triangsand3paths} to the pairs of sets $N_H(x)$ and $N_H(y)$ for each pair $(x,y)\in V(H)\times V(H)$ with $\epsilon=1/2$.
Thus, 
\begin{equation}\label{equa:3pathsandtriangles}
\begin{minipage}[c]{0.83\textwidth}
for all $(x,y)\in V(H)\times V(H)$ we have that $N_H(x)$ contains at least $\alpha^2n/2$ vertices $z$ such that $N_G(z)\cap N_H(y)\neq\varnothing$.
\end{minipage}\ignorespacesafterend 
\end{equation}
Whenever necessary, for our claims we assume that $n$ is sufficiently large.

For each pair of (not necessarily distinct) vertices $(x,y)\in V(H)\times V(H)$, consider the set
\[Z(x,y)\coloneqq\{zz'\in E(G):z\in N_H(x),z'\in N_H(y)\}.\]
This is a set of `available' edges for $(x,y)$\COMMENT{Note there can be some degenerate cases with $z=y$ or $z'=x$ (or maybe even both), but these are constantly many out of the linearly many, and should work fine with our notation as well.}.
Throughout the proof, we will use these edges to make alterations on graphs; every time we do so, we will update this set of available edges (in particular, we will always restrict the list to edges of a `current' graph; this will become clear later in the proof).
For simplicity of notation, whenever we consider an edge $zz'$ from some set $Z(x,y)$ (or any of their updated versions), we will implicitly assume that $z\in N_H(x)$ and $z'\in N_H(y)$; note, however, that these edges are not really `oriented' in the definition, and so $zz'\in Z(y,x)$ too.
By \eqref{equa:3pathsandtriangles}, for all $(x,y)\in V(H)\times V(H)$ we have 
\begin{equation}\label{equa:Zsize}
    |Z(x,y)|\geq\alpha^2n/4.
\end{equation}
Note further that, since $Z(x,y)\subseteq G$ (by abusing notation and identifying $Z(x,y)$ with a graph),
\begin{equation}\label{equa:degZ}
\Delta(Z(x,y))\leq2.
\end{equation} 

Take an arbitrary set $U\subseteq V(H)$ of $m\coloneqq\alpha^2n/1000$ vertices and label them as $u_1,\ldots,u_m$.
For each $j\in[m]$, iteratively, choose an edge $e_j=z_jz_j'\in Z(u_j,u_j)$ such that $e_j\cap U=\varnothing$ and $e_j\cap\bigcup_{k\in[j-1]}e_k=\varnothing$ (the existence of such edges is guaranteed in every step by \eqref{equa:Zsize} and \eqref{equa:degZ}).
Let $W\coloneqq\bigcup_{j\in[m]}e_j$\COMMENT{Note this is a set of vertices.}.
Now, for each $(x,y)\in V(H)\times V(H)$, we update the list of `available' edges for $(x,y)$ by letting 
\[Z'(x,y)\coloneqq\{zz'\in E(G):z\in N_H(x)\setminus(U\cup W),z'\in N_H(y)\setminus(U\cup W)\},\]
so it follows from \eqref{equa:Zsize} and \eqref{equa:degZ} and since $|U\cup W|=3m$ that\COMMENT{We could do better as follows. We count the number of `forbidden' edges. First, each vertex of $U$ forbids at most two edges. Then, each edge $e_j$ forbids at most three edges: itself and at most one more incident to each of its endpoints. Overall, this gives $5m=\alpha^2n/200$.
What is written below is a bit more careless: just the number of deleted vertices ($3m$) times at most $2$ edges forbidden for each vertex.}
\begin{equation}\label{equa:Zsize2}
    |Z'(x,y)|\geq\alpha^2n/4-3\alpha^2n/500\geq\alpha^2n/5.
\end{equation}
Now, for each $j\in[m-1]$, iteratively, choose an edge $f_j=w_jw_j'\in Z'(z_j',z_{j+1})$ satisfying that $f_j\cap\bigcup_{k\in[j-1]}f_k=\varnothing$.
The existence of such edges in each step follows by \eqref{equa:Zsize2} and \eqref{equa:degZ}.

Consider the path $P\coloneqq z_1z_1'w_1w_1'z_2\ldots w_{m-1}'z_mz_m'$ (the fact that this is a path follows by the previous choices of edges).
Note $|V(P)|\leq4m$.
Let $W'\coloneqq V(P)\setminus\{z_1,z_m'\}$.
Consider now the graph $G_0\coloneqq(G-(W'\cup U))\cup P$ or $G_0\coloneqq((M\cup G)-(W'\cup U))\cup P$\COMMENT{This seemingly weird definition is a short way to ensure that we delete all edges of the cycles incident to $P$ which do not lie in $P$; this is important so that we can later consider that we have a set of paths.}, depending on the statement we are trying to prove.
By \cref{lem:fewcycles}, this is a union of at most $\log^2n$ cycles and at most $(1+o(1))\alpha^2n/200$ paths, all vertex-disjoint (where we might have degenerate cases where some paths have length~$0$, that is, they are an isolated vertex).
Indeed, note that \cref{lem:fewcycles} asserts that the number of components of $G_{n,2}$ or $M\cup G_{n,1}$ is $o(\alpha^2n)$.
The removal of each of the vertices in $W'\cup U$ may increase the number of components by at most one, and $|W'\cup U|\leq5m=\alpha^2n/200$, so the claim follows.
Furthermore, after adding the path $P$ again, we have that $\Delta(G_0)\leq2$.
Indeed, the fact that the endpoints of $P$ have degree at most $2$ in $G_0$ follows since the first and last edges of $P$ lie in $E(G)$.

We will now use the edges of $H$ to iteratively combine these paths and cycles into a single cycle $C\subseteq(H\cup G)-U$ of length $n-m$ with $P\subseteq C$, which we will later use to obtain a Hamilton cycle (and, indeed, cycles of all lengths).
Let $V\coloneqq V(H)\setminus U$.
For each $(x,y)\in V^2$, we update the set of available edges by setting 
\[Z_0(x,y)\coloneqq\{zz'\in E(G):z\in N_H(x)\setminus(V(P)\cup U),z'\in N_H(y)\setminus(V(P)\cup U)\}.\]
It follows by \eqref{equa:Zsize} and \eqref{equa:degZ} and since $|V(P)\cup U|\leq5m$ that\COMMENT{Again, we could improve the constants here, but an easy bound is that we remove $5m$ vertices, each of which removes at most $2$ edges.}
\begin{equation}\label{equa:Zsize3}
|Z_0(x,y)|\geq\alpha^2n/4-\alpha^2n/100=6\alpha^2n/25.
\end{equation}

Throughout the upcoming process, we will define graphs $G_1,\ldots,G_t$, for some $t\in\mathbb{N}$, where each one is obtained from the previous by some `small' alteration.
All these graphs will be unions of vertex-disjoint paths and cycles spanning $V$.
The process will end when $G_t=C$, that is, when we obtain the desired cycle spanning $V$.
For each $i\in[t]$, the alteration that we perform on $G_{i-1}$ to construct $G_i$ will depend on its structure.
We present here a sketch; the full details are given in cases~\hyperlink{case1}{1} to \hyperlink{case3}{3} below.
While the current graph $G_{i-1}$ contains at least two paths (case~\hyperlink{case1}{1}), we create a new graph~$G_i$ with one fewer path and whose number of components does not increase by more than one.
If~$G_{i-1}$ contains exactly one path (case~\hyperlink{case2}{2}), then we either decrease the number of components or make sure that~$G_i$ has no paths (while not increasing the number of components).
Finally, if~$G_{i-1}$ contains at least two cycles and no paths (case~\hyperlink{case3}{3}), then we create a graph $G_i$ with one fewer component, but we possibly create a path in the process.
It follows from this description that the process must really end (assuming it can be carried out, which we prove later).
Indeed, we only apply case~\hyperlink{case1}{1} while at least two components are paths, and each time reduce the number of paths and do not increase the number of components by more than one.
Since $G_0$ contains at most $\log^2n$ cycles and at most $(1+o(1))\alpha^2n/200$ paths, after some number of iterations we will have a graph containing only one path and at most $(1+o(1))\alpha^2n/100$ cycles.
Furthermore, once there is only one path, there will never be more than one again, so we will never apply case~\hyperlink{case1}{1} again.
From this point on, we apply cases~\hyperlink{case2}{2} or~\hyperlink{case3}{3} intermittently, as needed, and we always either reduce the number of components of the graph, or make sure that we will reduce this number in the next iteration (while not increasing the number of components now).
In particular, this guarantees that every two steps we decrease the number of components.
Thus, at some point we will have a unique component.
If it is not a cycle, an application of case~\hyperlink{case2}{2} will yield the desired cycle.

The alterations performed throughout the process will use some of the `available' edges.
For instance, in some cases we will pick two vertices $x$ and $y$ and an edge $zz'\in Z_0(x,y)$, and use these to alter the graph.
With this alteration, we will remove $zz'$ from $G_{i-1}$ (and our process is correct only if $zz'\in E(G_{i-1})$).
Thus, for future iterations, we need to remove $zz'$ from the set of `available' edges $Z_0(x,y)$.
For all $i\in[t]$ and all pairs of vertices $x,y$, we will denote by $Z_i(x,y)$ a subset of $Z_0(x,y)$ which is `available' for the next iteration.
For all $i\in[t]$, we will always define $G_i$ as $G_i\coloneqq(G_{i-1}\setminus E_1)\cup E_2$, where $E_1$ and $E_2$ are sets of edges of $H\cup G$.
Then, for all $(x,y)\in V^2$, we will set $Z_i(x,y)\coloneqq Z_{i-1}(x,y)\setminus E_1$.
Thus, we will not write this definition explicitly in each case, as it will follow from the definition of $G_i$.

Throughout the process, we will assume that some conditions hold.
In particular, for all $i\in[t]_0$ we will have $\Delta(G_i)\leq2$ and $P\subseteq G_i\subseteq(H\cup G)\setminus U$ (note that these hold for $G_0$ by definition; for larger values of $i$, they will be a consequence of the specific process which we follow) and, for all $(x,y)\in V^2$, that $Z_i(x,y)\subseteq G_i$ and $Z_i(x,y)$ and $P$ are vertex-disjoint (both of which hold by definition).
Note that the conditions that $Z_i(x,y)\subseteq G_i$ and $\Delta(G_i)\leq2$ imply \eqref{equa:degZ} holds for each $Z_i(x,y)$ (which also holds simply by the definition of $Z_i(x,y)$); we will keep referring to \eqref{equa:degZ} when we wish to use this fact.
Furthermore, we will assume that in each step we have 
\begin{equation}\label{equa:Zsize4}
    |Z_{i-1}(x,y)|\geq\alpha^2n/8;
\end{equation}
it will follow from our process that the number of steps $t$ that we perform and the number of edges that become `unavailable' at each step are small, so that \eqref{equa:Zsize4} holds.

We now provide the details of the process.
Let $i\in\mathbb{N}$ and assume we have already defined $G_{i-1}$.
If~$G_{i-1}$ is a cycle of length $n-m$ (which spans $V$), we are done.
Otherwise, we will want to create a new graph $G_i$.
For this, we will need to consider several cases.

\hypertarget{case1}{\textbf{Case 1.}}
Assume at least two of the components of $G_{i-1}$ are paths.
Let $P'$ be one of the path components of $G_{i-1}$, and let its endpoints be $x$ and $y$.
Choose any edge $zz'\in Z_{i-1}(x,y)$ such that $\dist_{G_{i-1}}(zz',\{x,y\})>1$\COMMENT{This condition ensures that $P'\neq xzz'y$, or other possibilities which would make this step not work. This is not a problem since this forbids at most $2$ edges, and we have many more by \eqref{equa:Zsize4}.} and let $G_i\coloneqq(G_{i-1}\setminus\{zz'\})\cup\{xz,yz'\}$.
Depending on the relative position of $zz'$ with respect to $P'$, the total number of components will either decrease by one, remain the same, or increase by one\COMMENT{If $zz'$ lies outside the path, the number of components decreases. If it lies inside $P'$, then it wil either remain the same or increase by one depending on the relative order of $x,z,z',y$ in $P'$.}.

\hypertarget{case2}{\textbf{Case 2.}}
Assume exactly one component of $G_{i-1}$ is a path $P'$.
Let its endpoints be $x$ and $y$.
Consider the following cases.
\begin{enumerate}[label=2.\arabic*.]
    \item\label{case21} Assume that $N_H(x)\cup N_H(y)\nsubseteq U\cup V(P)\cup V(P')$.
    Suppose that there exists some $z\in N_H(x)\setminus(U\cup V(P)\cup V(P'))$ (the other case is analogous).
    Then, choose a vertex $z'\in N_{G_{i-1}}(z)$ (so $zz'$ is an edge of a cycle of $G_{i-1}$) and let $G_i\coloneqq(G_{i-1}\setminus\{zz'\})\cup\{xz\}$.
    This reduces the number of components and increases the length of the unique path $P'$.
    \item\label{case22} Otherwise, we have that $Z_{i-1}(x,y)\subseteq E(P')$.
    Suppose there is an edge $zz'\in Z_{i-1}(x,y)$ such that $\dist_{P'}(x,z')<\dist_{P'}(x,z)$.
    If so, let $G_i\coloneqq(G_{i-1}\setminus\{zz'\})\cup\{xz,yz'\}$.
    In this case, we turn~$P'$ into a cycle.
    \item\label{case23} Otherwise, every $zz'\in Z_{i-1}(x,y)$ satisfies $z,z'\in V(P')$ and $\dist_{P'}(x,z)<\dist_{P'}(x,z')$.
    By \eqref{equa:degZ} and \eqref{equa:Zsize4}, we can find a set $Z_{i-1}'(x,y)\subseteq Z_{i-1}(x,y)$ with 
    \begin{equation}\label{equa:Zsize5}
        |Z_{i-1}'(x,y)|\geq\alpha^2n/16
    \end{equation}
    and such that any two $zz',ww'\in Z_{i-1}'(x,y)$ are vertex-disjoint.
    Now choose two edges $zz',ww'\in Z_{i-1}'(x,y)$ with $\dist_{P'}(x,z)<\dist_{P'}(x,w)$ which minimise $\dist_{P'}(z,w)$ over all possible such pairs of edges.
    We now have two cases.
    \begin{enumerate}[label=2.3.\arabic*.]
        \item If $\dist_{P'}(z',w)=1$, let $G_i\coloneqq(G_{i-1}\setminus\{z'w\})\cup\{xw,yz'\}$.
        This, again, turns $P'$ into a cycle.
        \item Otherwise, let $z'',w''\in V(P')$ be such that $\dist_{P'}(x,z'')=\dist_{P'}(x,z')+1$ and\linebreak{} $\dist_{P'}(x,w'')=\dist_{P'}(x,w)-1$.
        Let $P''$ be the subpath of $P'$ whose endpoints are~$z''$ and~$w''$, and let $\ell\coloneqq\dist_{P'}(z'',w'')$ be the length of $P''$\COMMENT{Note this distance could be $0$.}.
        By an averaging argument using \eqref{equa:Zsize5}, it follows that 
        \[\ell\leq16|V(P')|/(\alpha^2n)=\bigO(\alpha^{-2})=o(\alpha^2n).\]
        By \eqref{equa:Zsize4}, this guarantees that we can pick an edge $z^3w^3\in Z_{i-1}(z'',w'')$ such that $\dist_{G_{i-1}}(z_3w_3,V(P''))\geq1$\COMMENT{No need to subtract $zz'$ or $ww'$ because these are still part of the graph.}\COMMENT{The fact that the chosen edge cannot be incident to any internal vertex of $P''$ follows since $Z\subseteq G_i$ and $\Delta(G_i)\leq2$. But, similarly to case 1, we need to avoid choosing an edge incident to the endpoints of $P''$, as this would actually make our claims false.
        In case 1 we really need the distance to be greater than $1$; here, I believe at least $1$ suffices.}.
        Now, let \[G_i\coloneqq(G_{i-1}\setminus\{z'z'',ww'',z^3w^3\}))\cup\{xw,yz',z''z^3,w''w^3\}.\]
        In this case, if $z^3w^3\in E(P')$, we turn $P'$ into a cycle; otherwise, we combine the vertices of $P'$ and the cycle containing $z^3w^3$ into two new cycles.
    \end{enumerate}
\end{enumerate}

\hypertarget{case3}{\textbf{Case 3.}}
Assume all components of $G_{i-1}$ are cycles.
We consider the following cases.
\begin{enumerate}[label=3.\arabic*.]
    \item\label{case31} For each edge $xy\in E(G_{i-1})$, let $C_{xy}$ be the cycle of $G_{i-1}$ which contains this edge.
    Assume that there exists some $xy\in E(G_{i-1})\setminus E(P)$ satisfying that $Z_{i-1}(x,y)\nsubseteq E(C_{xy})$.
    If so, let $zz'\in Z_{i-1}(x,y)\setminus E(C_{xy})$ and let $G_i\coloneqq(G_{i-1}\setminus\{xy,zz'\})\cup\{xz,yz'\}$.
    This combines two cycles into one.
    \item\label{case32} Otherwise, from \eqref{equa:Zsize4}, it follows that each cycle of $G_{i-1}$ must contain at least $\alpha^2n/8$ vertices.
    Now let $x,y\in V\setminus V(P)$ be such that $x$ and $y$ lie in different cycles (note, in particular, that we may pick a vertex in any of the cycles since \mbox{$|V(P)|\leq4m<\alpha^2n/8$}) and let $zz'\in Z_{i-1}(x,y)$.
    Assume without loss of generality that $zz'$ lies in a cycle other than the one containing $x$. 
    By the definition of $Z_0(x,y)$, this means that $xz$ is an edge of $H$ joining two of the cycles of $G_{i-1}$.
    Now let $w\in N_{G_{i-1}}(x)$
    and let $G_i\coloneqq(G_{i-1}\setminus\{wx,zz'\})\cup\{xz\}$\COMMENT{Here again we could do better in some cases, but it will not change the asymptotic behaviour.}.
    This combines two cycles into a path.
\end{enumerate}

The fact that $\Delta(G_i)\leq2$ follows by the construction in each case, since we only add at most one edge incident to each vertex, and we only do this for vertices which had degree one in $G_{i-1}$ or for which we first delete an incident edge.
The fact that $P\subseteq G_i$ follows since we have made sure not to delete any of the edges of $P$, and $G_i\subseteq(H\cup G)\setminus U$ since we only add edges of $H$.

Now let $C=G_t$ be the graph resulting from the above process.
The argument that we provided when arguing that the process must end shows that $t\leq(1+o(1))\alpha^2n/40$\COMMENT{An easy upper bound: we start with at most $(1+o(1))\alpha^2n/200$ paths.
In the worst case, each of these is turned into two cycles, so at some point (after at most $(1+o(1))\alpha^2n/200$ iterations) we will have at most $(1+o(1))\alpha^2n/100$ cycles.
Now in every two iterations we are guaranteed to decrease the number of components by at least one, so the number of iterations needed to combine all cycles is at most $(1+o(1))\alpha^2n/50$.
This gives a total of at most $(1+o(1))\alpha^2n/40$ iterations}, and this implies that \eqref{equa:Zsize4} holds (indeed, observe that in all cases the number of edges which are deleted from each $Z_i(x,y)$ is at most $3$, so at most $(3+o(1))\alpha^2n/40$ edges are removed, and the conclusion follows from \eqref{equa:Zsize3}).

By the iterative process above, we have proved the existence of a cycle $C$ of length $n-m$ such that $P\subseteq C$.
We must now prove that there is a cycle of length $k$, for all $3\leq k\leq n$. 
Recall that $Z_t(x,y)\subseteq E(C)\setminus E(P)$ for all $(x,y)\in V^2$ and \eqref{equa:Zsize4} holds.
We split our analysis into three cases.

Suppose first that $3\leq k\leq\alpha^2n/20$.
In such a case, consider any subpath $P'\subseteq C$ of length $k-3$, and let its endpoints be $x$ and $y$\COMMENT{When $k=3$ we have a degenerate case with $x=y$.}.
Now choose any edge $zz'\in Z_t(x,y)$ such that $z,z'\notin V(P')$ (the existence of such an edge follows by \eqref{equa:Zsize4} and \eqref{equa:degZ}).
Then, the union of $P'$ and the path $xzz'y$ forms a cycle of length $k$.

Assume next that $n-m\leq k\leq n$.
Consider a set $J\subseteq[m]$ with $|J|=k+m-n$.
In $C$, for each $j\in J$, replace the edge $e_j=z_jz_j'$ by the path $z_ju_jz_j'$.
This yields a cycle of the desired length.

Finally, assume $\alpha^2n/20<k<n-m$.
Consider a subpath $P'\subseteq C$ of length $k-3$ such that $P\subseteq P'$.
Let the endpoints of $P'$ be $x$ and $y$, respectively, and consider the set $Z_t(x,y)$.
We consider the following three cases.
\begin{enumerate}[label=\arabic*.]
    \item Assume that there exists $zz'\in Z_t(x,y)$ such that $z,z'\notin V(P')$.
    Then, the union of $P'$ and the path $xzz'y$ forms a cycle of length $k$.
    \item Otherwise, let $Z_t'(x,y)\coloneqq\{e\in Z_t(x,y):e\subseteq V(P')\}$, so we have $Z_t'(x,y)\subseteq E(P')$ and $|Z_t'(x,y)|\geq|Z_t(x,y)|-2>\alpha^2n/16$ by \eqref{equa:Zsize4}.
    Recall that $Z_t'(x,y)$ and $P$ are vertex-disjoint.
    Suppose there is an edge $zz'\in Z_t'(x,y)$ such that $\dist_{P'}(x,z')<\dist_{P'}(x,z)$.
    If so,\linebreak{} $(P'\setminus\{zz'\})\cup\{xz,yz'\}$ is a cycle of length $k-2$ which contains $P$. 
    To obtain a cycle of length~$k$, replace $e_1=z_1z_1'$ and $e_2=z_2z_2'$ by the paths $z_1u_1z_1'$ and $z_2u_2z_2'$, respectively.
    \item Otherwise, $Z_t'(x,y)\subseteq E(P')$ and all $zz'\in Z_t'(x,y)$ satisfy that $\dist_{P'}(x,z)<\dist_{P'}(x,z')$.
    Recall, again, that $Z_t'(x,y)$ and $P$ are vertex-disjoint.
    We now proceed similarly to case~\ref{case23}
    First, by \eqref{equa:degZ} and the current bound on $|Z_t'(x,y)|$, we may restrict ourselves to a subset of available edges $Z_t''(x,y)\subseteq Z_t'(x,y)$ such that $|Z_t''(x,y)|\geq\alpha^2n/32$ and any two edges $zz',ww'\in Z_t''(x,y)$ are vertex-disjoint.
    Now, choose two edges $zz',ww'\in Z_t''(x,y)$ with $\dist_{P'}(x,z)<\dist_{P'}(x,w)$ which minimise $\dist_{P'}(z,w)$ over all possible pairs.
    Let $P''$ be the subpath of $P'$ whose endpoints are $z'$ and $w$, and let $\ell\coloneqq\dist_{P'}(z,w)$.
    By an averaging argument using the fact that $|Z_t''(x,y)|\geq\alpha^2n/32$, it follows that $\ell\leq32|V(P')|/(\alpha^2n)<m$.
    Then, the graph \mbox{$(P'\setminus E(P''))\cup\{xw,yz'\}$} is a cycle of length $k-\ell$ which contains $P$\COMMENT{This is because $P$ is longer than the distance between the two edges, and since the two edges avoid $P$, it cannot be that we break $P$.}.
    In order to obtain a cycle of length $k$, for each $j\in[\ell]$ replace $e_j=z_jz_j'$ by the path $z_ju_jz_j'$.\qedhere
\end{enumerate}
\end{proof}


\section{Randomly perturbing graphs by a perfect matching}\label{sect:1factor}

Let $\alpha^*\coloneqq\sqrt{2}-1$.
We first show that, for any constant $\alpha<\alpha^*$, there exist $n$-vertex graphs $H$ with $\delta(H)=\alpha n$ such that $H\cup G_{n,1}$ does not a.a.s.~contain a Hamilton cycle\COMMENT{We can also prove the stronger statement that it a.a.s.~does not contain a Hamilton cycle.}.
Indeed, let $H=(A,B,E)$ be a complete unbalanced bipartite graph, where $|A|=\alpha n$ and $|B|=(1-\alpha)n$ (so $d_H(v)=\alpha n$ for all $v\in B$), and let $G$ be a perfect matching on the same vertex set as $H$.
It is easy to check that a necessary condition for $G$ so that $H\cup G$ contains a Hamilton cycle is that $G[B]$ contains at least $(1-2\alpha)n$ edges\COMMENT{Indeed, observe that, since $G$ is a matching, $(H\cup G)[B]$ does not contain any paths of length $2$ (and the same is true for $(H\cup G)[A]$).
Thus, each cycle in $H\cup G$, seen as a word which only shows the set to which each vertex belongs, must be a sequence of vertices which contains at most to consecutive repeated symbols (i.e., think of a sequence of the form $ABABAABABBABBAB\ldots$).
Let $e_A$ and $e_B$ be the number of times that two consecutive vertices of the sequence lie in $A$ or $B$, respectively, and let $v_A$ and $v_B$ be the number of times that there is an `isolated' vertex in the sequence that lies in $A$ or $B$, respectively.
In order to have a Hamilton cycle, we must have the following:
\[
\begin{cases}
2e_B+v_B=|B|=(1-\alpha)n,\\
2e_A+v_A=|A|=\alpha n,\\
e_B+v_B=e_A+v_A.
\end{cases}
\]
Indeed, the first condition is needed so that all vertices in $B$ are used, and the second is required so that all vertices in $A$ are used.
Finally, the third condition is given by the fact that the number of segments of consecutive vertices in a cycle must be the same for $A$ and $B$.
By substituting the third equation into the second, we have that
\[
\begin{cases}
2e_B+v_B=(1-\alpha)n,\\
e_B+v_B+e_A=\alpha n.
\end{cases}
\]
Subtracting the second equation from the first, we conclude that $e_B-e_A=(1-2\alpha)n$, and the claim follows using the fact that $e_A\geq0$.}.
Now, in $G_{n,1}$, each edge appears with probability $1/(n-1)$, so\COMMENT{The last inequality follows since $\alpha<\alpha^*$.}
\[\mathbb{E}[e_{G_{n,1}}(B)]=\binom{(1-\alpha)n}{2}\frac{1}{n-1}\leq\frac{(1-\alpha)^2}{2}n<(1-2\alpha)n.\]
The conclusion then follows by Markov's inequality\COMMENT{Let $\beta\coloneqq(1-2\alpha+(1-\alpha)^2/2)/2$, so this number is strictly between the other two (in particular, strictly less than $1-2\alpha$).
By Markov's inequality, we have that
\[\mathbb{P}[H\cup G_{n,1}\text{ is Hamiltonian}]\leq\mathbb{P}[e(G_{n,1}[B])\leq\beta n]\leq\frac{\mathbb{E}[e(G_{n,1}[B])]}{\beta n}\leq\frac{1-\alpha^2}{2\beta},\]
which is a constant less than $1$.}.

In order to prove \cref{thm:matching}, we first need the following lemma. 

\begin{lemma}\label{lem:partition}
Let $1/n \ll \beta < \alpha/2 < 1/4$.
Let $H$ be an $n$-vertex graph with $\delta(H)\geq\alpha n$ which does not contain a matching of size greater than $(n-\sqrt{n})/2$.
Let $M$ be a maximum matching in $H$.
Then, the vertex set of $H$ can be partitioned into sets $A\cupdot B_1\cupdot B_2\cupdot C_1\cupdot C_2\cupdot R$ in such a way that the following hold:
\begin{enumerate}[label=$(\mathrm{H\arabic*})$]
    \item\label{partP1} $|A| \leq 12\beta^{-2}$;
    \item\label{partP4} $n-2|M|-|A|\leq|R|\leq n-2|M|$;
    \item\label{partP2} $|B_1|=|B_2|$ and $H[B_1,B_2]$ contains a perfect matching; furthermore, for every $v\in B_2\cup R$ we have $e_H(v,B_1)\geq(\alpha -2\beta) n$, and
    \item\label{partP3} $|C_1|=|C_2|$ and $H[C_1,C_2]$ contains a perfect matching; furthermore, for all $v\in C_2$ we have $e_H(v,B_2\cup R)\leq\beta^{-1}+1$.
\end{enumerate} 
\end{lemma}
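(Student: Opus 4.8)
The plan is to build the partition greedily around the structure forced by a maximum matching $M$. Since $H$ has no matching larger than $(n-\sqrt n)/2$, the set $R_0$ of $M$-unsaturated vertices has size $|R_0|=n-2|M|\geq\sqrt n$, and, as $M$ is maximum, $R_0$ is an independent set whose neighbourhoods behave rigidly. I would first extract from $R_0\cup N_H(R_0)$ the `obstruction' that prevents a large matching: because $R_0$ is independent and $M$ is maximum, no $M$-edge can have both endpoints dominated by $R_0$ in the usual augmenting-path sense, so the vertices of $N_H(R_0)$ can be grouped according to which $M$-edges they sit on, and a Kőnig/Gallai–Edmonds-type argument will show that there is a small `core' set $A$ (the vertices whose removal destroys the augmenting structure) with $|A|\le 12\beta^{-2}$ that meets every $M$-edge incident to a `bad' vertex. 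The bound $12\beta^{-2}$ should come from a double-counting/defect argument: if $A$ were large, one could reroute $M$ along short augmenting structures to get a bigger matching, contradicting maximality.

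Next I would peel off $A$ together with an appropriate set of $M$-edges, and set $R$ to be the remaining $M$-unsaturated vertices; by construction $n-2|M|-|A|\le|R|\le n-2|M|$, giving \ref{partP4}. The remaining $M$-saturated vertices, minus those in $A$ and those whose $M$-partner went into $A$, split into two types according to a degree dichotomy with threshold roughly $(\alpha-2\beta)n$ into a reference set. Concretely, I would let $B_1$ be the set of `high-degree' endpoints and $C_1$ the `low-degree' endpoints, with $B_2,C_2$ their $M$-partners; since each set is obtained by orienting $M$-edges, automatically $|B_1|=|B_2|$, $|C_1|=|C_2|$, and $M$ restricted to these sets gives the required perfect matchings in $H[B_1,B_2]$ and $H[C_1,C_2]$, yielding the matching parts of \ref{partP2} and \ref{partP3}. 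For the degree conditions: a vertex $v\in B_2\cup R$ has $d_H(v)\ge\alpha n$, and at most $|A|+2\cdot(\text{number of }C\text{-edges})\le 2\beta n$ of these neighbours can fail to land in $B_1$ — here is where the choice of threshold and the bound $|A|\le12\beta^{-2}\ll\beta n$ are used — so $e_H(v,B_1)\ge(\alpha-2\beta)n$. Dually, $C_2$ is defined exactly as the endpoints with few neighbours into $B_2\cup R$; the constant $\beta^{-1}+1$ is what the defect/averaging argument delivers for how many such neighbours a `low' vertex can have before it would have to be reclassified or before $M$ could be augmented.

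The step I expect to be the main obstacle is establishing \ref{partP1}, i.e.\ that the `core' set $A$ has size bounded by $12\beta^{-2}$, independent of $n$. This is the only genuinely combinatorial input; everything else is bookkeeping once the dichotomy is set up. The right tool is the Gallai–Edmonds decomposition: the set $D$ of vertices missed by some maximum matching, its neighbourhood $\mathcal A=N_H(D)$, and the rest. Maximality of $M$ together with $\delta(H)\ge\alpha n$ forces each component of $H[D]$ to be either tiny or to have huge minimum degree, and a counting argument on $e_H(D,\mathcal A)$ versus the deficiency $|R_0|\ge\sqrt n$ pins $|\mathcal A|=O(\beta^{-2})$; one then takes $A\supseteq\mathcal A$ padded to absorb a few extra `bad' vertices, still of bounded size. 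Making the constant exactly $12\beta^{-2}$ will require choosing the threshold in the $B/C$ dichotomy carefully so that the only vertices not cleanly classified are those incident to $\mathcal A$, but the precise constant is not delicate — any $O(\beta^{-2})$ bound suffices and the authors note as much elsewhere. After that, verifying \ref{partP2}--\ref{partP3} is a routine check of the three defining properties (equal sizes, perfect matching from $M$, degree bound) for each of the two pairs.
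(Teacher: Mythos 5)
Your high-level framing is partly right---$R$ is essentially the set of $M$-unsaturated vertices, the pairs $(B_1,B_2)$ and $(C_1,C_2)$ come from orienting $M$-edges so the matching parts of \ref{partP2} and \ref{partP3} are automatic, and \ref{partP1} is indeed the crux---but the mechanism you propose for \ref{partP1} is wrong, and the degree dichotomy would not give the constant bound in \ref{partP3}.

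The central problem is the claim that the Gallai--Edmonds barrier $\mathcal A=N_H(D)$ has size $O(\beta^{-2})$. This fails badly, and in fact fails on the paper's own extremal example. Take $H=K_{\alpha n,(1-\alpha)n}$ with parts $A'$ (small) and $B'$ (large): every vertex of $B'$ is missed by some maximum matching while no vertex of $A'$ is, so $D=B'$ and $\mathcal A=N_H(D)\setminus D=A'$, giving $|\mathcal A|=\alpha n$. The minimum-degree hypothesis actually pushes $|\mathcal A|$ \emph{up}: an $M$-unsaturated vertex $v\in R_0\subseteq D$ lying in a singleton component of $H[D]$ sends all of its $\geq\alpha n$ edges into $\mathcal A$, so $|\mathcal A|\geq\alpha n$ as soon as one such vertex exists, and in the example every vertex of $D$ is such. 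Your intended counting of $e_H(D,\mathcal A)$ against the deficiency therefore proves a \emph{lower} bound on $|\mathcal A|$ of order $n$, not an upper bound of order $\beta^{-2}$. The barrier is simply the wrong candidate for $A$.

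The paper achieves $|A|\leq 12\beta^{-2}$ by an iterative layering argument, not a barrier. Starting from $B_2^0=R_0$, one sets $B_1^i=\{v\ \text{fresh}: e_H(v,B_2^{i-1})\geq 2\}$ and $B_2^i=M(B_1^i)$, and proves by augmenting-path arguments that each $B_2^i$ is independent, $B_1^i$ spans no $M$-edge, and---this is the key quantitative step---that at most $4\beta^{-1}$ vertices of each $B_2^{i-1}$ fail to have $\geq(\alpha-\beta)n$ edges into the accumulated $B_1$-layers (else one could reroute to a larger matching). The layers exhaust themselves after $i^*\leq\beta^{-1}$ rounds, and $A$ collects the $O(\beta^{-1})$ exceptional vertices per layer over $O(\beta^{-1})$ layers together with their $M$-partners; that is where $12\beta^{-2}$ comes from. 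In the bipartite example this gives $A=\varnothing$, whereas $\mathcal A$ has linear size, so the two objects are not even comparable.

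The layering is also essential for \ref{partP3}: a vertex $v\in C_2$ satisfies $e_H(v,B_2^j)\leq 1$ for each of the $i^*+1\leq\beta^{-1}+1$ layers (otherwise $v$ would have been pulled into $B_1^{j+1}$), hence $e_H(v,B_2\cup R)\leq\beta^{-1}+1$. This is an absolute constant, independent of $n$. A dichotomy at a linear threshold like $(\alpha-2\beta)n$---or any threshold of the form $cn$---cannot distinguish that from ``fewer than $cn$''; so even granting a bounded $A$, the $B/C$ split you describe would not yield \ref{partP3} without the layer structure.
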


\begin{proof}
Let $M$ be a maximum matching in $H$.
Let $V\coloneqq V(M)$ and $R'\coloneqq V(H)\setminus V$.
By the maximality of $M$ and the condition on $H$ in the statement, we have that $E(H[R'])=\varnothing$ and $|R'|\geq\sqrt{n}$.
For each vertex $v\in V$, let $M(v)$ denote the unique vertex $w\in V$ such that $vw\in M$.
For any set $S\subseteq V$, we define $M(S)\coloneqq\{M(v):v\in S\}$.

To prove the statement we will follow an iterative process.
We will inductively construct two sequences of sets $\{B_1^{i}\}_{i\geq1}$ and $\{B_2^{i}\}_{i\geq1}$ with $B_1^{i},B_2^{i}\subseteq V$ and use these to construct the vertex partition.
First, for notational purposes we let $B_2^{-1}=B_1^0\coloneqq\varnothing$, and we define $B_2^0\coloneqq R'$.
Then, for each $i\geq1$, we define
\begin{equation}\label{equa:bidef}
    B_1^{i}\coloneqq\left\{v\in V\setminus \left(\bigcup_{j=0}^{i-1} (B_1^{j}\cup B_2^{j})\right) : e_H(v,B_2^{i-1})\geq 2\right\}\quad\text{ and }\quad B_2^{i}\coloneqq  M(B_1^{i}).
\end{equation}

\begin{claim}\label{claim:properties}
For all $i\in[\beta n/2]_0$, the following properties hold.
\begin{enumerate}[label=$(\mathrm{\roman*})$]
    \item\label{item: p3} $B_1^i$ does not span any edge from $M$.
    \item\label{item: p4} $B_2^i$ is an independent set.
    \item\label{item: p5} All but at most $4\beta^{-1}$ vertices $v\in B_2^{i-1}$ satisfy that
    \[e_H\left(v,\bigcup_{j=1}^{i} B_{1}^{j}\right)\geq(\alpha-\beta)n.\]
\end{enumerate}
\end{claim}

\begin{claimproof}
We proceed by induction on $i$.
As a base case, note that \ref{item: p3} holds trivially for $B_1^0$, we have already established that $B_2^0=R'$ is an independent set, and \ref{item: p5} is vacuously true.
Now, for some $i\in[\beta n/2-1]_0$, assume that the properties in the statement hold for all $j\in[i]_0$ and that we want to show the properties also hold for $i+1$.
Note that \eqref{equa:bidef} and property \ref{item: p3} imply that 
\begin{equation}\label{equa:bidisjoint}
\begin{minipage}[c]{0.83\textwidth}\centering
for all $(j,\ell),(j',\ell')\in([i]_0)\times[2]$ with $(j,\ell)\neq(j',\ell')$ we have $B_\ell^j\cap B_{\ell'}^{j'}=\varnothing$.
\end{minipage}\ignorespacesafterend 
\end{equation} 

Observe that, if $|B_1^{i+1}|\leq1$, then \ref{item: p3} and \ref{item: p4} hold trivially, and $B_1^{i+j}=\varnothing$ for all $j\geq2$.
Therefore, for the proof of these two properties we may assume that $|B_1^{i+1}|\geq2$ and, thus, $|B_1^j|\geq2$ for all $j\in[i]$.

In order to prove \ref{item: p3}, suppose for a contradiction that $B_1^{i+1}$ spans some edge $e=xy\in M$.
Our aim is to construct a matching larger than $M$.
To do so, first note that, by the definition of $B_1^{i+1}$ in \eqref{equa:bidef}, we may find two distinct vertices $x_2^i,y_2^i\in B_2^{i}$ such that $xx_2^i,yy_2^i\in E(H)$.
We now proceed recursively for $j\in[i]$, starting with $j=i$ and decreasing its value, as follows:
\begin{itemize}
    \item Let $x_1^j\coloneqq M(x_2^j)$ and $y_1^j\coloneqq M(y_2^j)$, so $x_1^j,y_1^j\in B_1^j$.
    \item Choose two distinct vertices $x_2^{j-1},y_2^{j-1}\in B_2^{j-1}$ such that $x_1^jx_2^{j-1},y_1^jy_2^{j-1}\in E(H)$ (which can be done by the definition of $B_1^j$).
\end{itemize}
Now consider the path $P\coloneqq x_2^0x_1^1x_2^1\ldots x_1^ix_2^ixyy_2^iy_1^i\ldots y_2^1y_1^1y_2^0$ (where \eqref{equa:bidisjoint} guarantees that this is indeed a path).
We may then take $M_1\coloneqq M\triangle E(P)$.
It is easy to verify that $|M_1|=|M|+1$, which contradicts the maximality of $M$. 

In order to prove \ref{item: p4}, we proceed similarly.
Suppose for a contradiction that $B_2^{i+1}$ is not an independent set and let $e=v_2^{i+1}w_2^{i+1}\in E(H[B_2^{i+1}])$.
We now proceed recursively for $j\in[i+1]$, starting with $j=i+1$ and decreasing its value, as follows:
\begin{itemize}
    \item Let $v_1^j\coloneqq M(v_2^j)$ and $w_1^j\coloneqq M(w_2^j)$, so $v_1^j,w_1^j\in B_1^j$.
    \item Choose two distinct vertices $v_2^{j-1},w_2^{j-1}\in B_2^{j-1}$ such that $v_1^jv_2^{j-1},w_1^jw_2^{j-1}\in E(H)$ (which can be done by the definition of $B_1^j$).
\end{itemize}
Consider the path $P'\coloneqq v_2^0v_1^1v_2^1\ldots v_1^iv_2^iv_1^{i+1}v_2^{i+1}w_2^{i+1}w_1^{i+1}w_2^iw_1^i\ldots w_2^1w_1^1w_2^0$ (in order to guarantee that this is indeed a path, we are using the fact that we have now proved that \eqref{equa:bidisjoint} also holds for $i+1$).
We may now take $M_1'\coloneqq M\triangle E(P')$ and, again, verify that $|M_1'|=|M|+1$. 

Finally, suppose there are $k\coloneqq\lceil4\beta^{-1}\rceil$ distinct vertices $v_1,\ldots,v_k\in B_2^i$ such that for all $\ell\in[k]$ we have 
\begin{equation}\label{equa:claimiiineweq}
    e_H\left(v_\ell,\bigcup_{j=1}^{i+1} B_{1}^{j}\right)<(\alpha-\beta)n.
\end{equation}
By \eqref{equa:bidef}, for each $j\in[i-1]_0$ and $\ell\in[k]$ we have that $e_H(v_\ell,B_2^j)\leq1$.
It follows from this, the fact that $E(H[B_2^i])=\varnothing$ (by \ref{item: p4}), the assumption on the minimum degree of $H$ and \eqref{equa:claimiiineweq} that for each $\ell\in[k]$ we have 
\[e_H\left(v_\ell,V\setminus\left(B_{1}^{i+1}\cup\bigcup_{j=1}^{i} (B_{1}^{j}\cup B_2^j)\right)\right)>\beta n-i\geq\beta n/2.\]
Furthermore, again by \eqref{equa:bidef}, we must have $e_H(B_2^i,B_2^{i+1})\leq n$\COMMENT{Indeed, otherwise there would have to be some vertex $v\in B_2^{i+1}$ with $e(v,B_2^i)\geq2$, but this would contradict the definition of $B_1^{i+1}$.}.
Combining this with the previous bound, it follows that
\[e_H\left(B_2^i,V\setminus\bigcup_{j=1}^{i+1} (B_{1}^{j}\cup B_2^j)\right)>k\beta n/2-n\geq n,\]
so there must exist some vertex $y\in V\setminus\bigcup_{j=1}^{i+1} (B_{1}^{j}\cup B_2^j)$ with $e_H(y,B_2^i)\geq2$, which contradicts the definition of $B_1^{i+1}$, so \ref{item: p5} holds.
\end{claimproof}

Consider the smallest $j\geq1$ such that $|B_1^j|<\beta n/2$ and let $i^*\coloneqq j-1$.
Note that $i^*\leq\beta^{-1}<\beta n/2$\COMMENT{We have that $n\geq|\bigcup_{i=1}^{i^*}(B_1^i\cup B_2^i)|\geq 2i^*\beta n/2$, so directly $i^*\leq 1/\beta$.} and that \cref{claim:properties}\ref{item: p5} for $i=1$ guarantees that $i^*\geq1$\COMMENT{For this to be true, we need that $\alpha-\beta\geq\beta/2$; this is guaranteed by the condition that $\beta<\alpha/2$ from the statement (which could be relaxed to this condition, but would yield meaningless results).}.

\begin{claim}\label{claim:properties2}
All but at most $4\beta^{-1}$ vertices $v\in B_2^{i^*}$ satisfy that
\[e_H\left(v,\bigcup_{j=1}^{i^*} B_{1}^{j}\right)\geq(\alpha-\beta)n.\]
\end{claim}

\begin{claimproof}
Similarly to the proof of property \ref{item: p5} in \cref{claim:properties}, suppose there is a set $S\subseteq B_2^{i^*}$ of size $k\coloneqq\lceil4\beta^{-1}\rceil$ such that for every $v\in S$ we have $e_H(v,\bigcup_{j=1}^{i^*} B_1^{j})<(\alpha-\beta)n$.
By \eqref{equa:bidef}, for each $j\in[i^*-1]_0$ and $v\in S$ we have that $e_H(v,B_2^j)\leq1$.
Combining these two bounds with the fact that $E(H[B_2^{i^*}])=\varnothing$ (by \cref{claim:properties}\ref{item: p4}) and the assumption on the minimum degree of $H$, it follows that for each $v\in S$ we have 
\[e_H\left(v,V\setminus\bigcup_{j=1}^{i^*} (B_{1}^{j}\cup B_2^j)\right)>\beta n-i^*,\]
so we conclude that
\[e_H\left(S,V\setminus\bigcup_{j=1}^{i^*} (B_1^{j}\cup B_2^{j})\right)>k(\beta n-i^*).\]
On the other hand, by the definition in \eqref{equa:bidef}, we have that\COMMENT{Each vertex (of which there are at most $n$) which sends at least 2 edges must lie in $B_1^{i^*+1}$ by definition, and all other vertices cannot send more than 1 edge.}
\[e_H\left(S,V\setminus\bigcup_{j=1}^{i^*} (B_1^{j}\cup B_2^{j})\right)\leq n+k|B_1^{i^*+1}|.\]
It follows that $|B_1^{i^*+1}|>\beta n-i^*-n/k\geq\beta n/2$, which contradicts the definition of $i^*$.
\end{claimproof}

Now, a partition of $V(H)$ as described in the statement can be obtained as follows.
Let $W$ be the set of all vertices $v\in\bigcup_{i=1}^{i^*}B_2^{i}$ for which $e_H(v,\bigcup_{i=1}^{i^*} B_{1}^{i})<(\alpha-\beta)n$, and let $W'$ be the set of all vertices $v\in B_2^0$ for which $e_H(v,B_{1}^{1})<(\alpha-\beta)n$.
Let $A\coloneqq W'\cup W\cup M(W)$.
It follows by \cref{claim:properties}\ref{item: p5} together with \cref{claim:properties2} that $|A|\leq 4\beta^{-1}(2i^*+1)\leq 12\beta^{-2}$, so \ref{partP1} holds.
Then, for each $\ell\in[2]$, let $B_\ell\coloneqq\bigcup_{i=1}^{i^*} B_\ell^{i}\setminus A$, and let $R\coloneqq R'\setminus A$ (so \ref{partP4} holds).
It follows that, for every $v\in R\cup B_2$, we have $e_H(v,B_1)\geq(\alpha-\beta)n-|A|\geq(\alpha-2\beta)n$, so \ref{partP2} holds.
Finally, let $C\coloneqq V\setminus(A\cup B_1\cup B_2)$.
Note that, if $C$ is not empty, then it contains a perfect matching by construction.
Consider a bipartition of $C$ into sets $C_1$ and $C_2$ such that $C_2=M(C_1)$ and $B_1^{i^*+1}\subseteq C_1$.
Then, \ref{partP3} follows by \eqref{equa:bidef} and the fact that $i^*\leq\beta^{-1}$.
\end{proof}

We will also use the following observations repeatedly.
\Cref{remark:trivial} is a trivial observation, while \cref{remark:newcycles,remark:newcycles2} follow from elementary case analyses.

\begin{remark}\label{remark:trivial}
Let $G$ be a graph and consider a bipartition $V(G)=A\cupdot B$.
Let $P\subseteq G$ be a path of length at least $5$ which does not contain two consecutive edges in $A$ or in $B$.
Then, for each $X\in\{A,B\}$, the path $P$ contains two distinct vertices $x,y\in X$ with $\dist_P(x,y)\leq3$.\COMMENT{
\begin{proof}
Given any path $P$ of length at least five, consider a string of length $|V(P)|$ made of two symbols, $A$ and $B$, one for each vertex, representing whether it lies in $A$ or in $B$.
The condition in the statements asserts that there never are three consecutive symbols which are equal.
Since the string contains at least $6$ symbols, it must thus contain at least $2$ instances of each of the symbols.
The conclusion that, for each $X\in\{A,B\}$, there exist two instances of $X$ which lie at distance at most $3$ from each other follows immediately by considering consecutive instances of $X$ in the string.
(Assume they are at distance at least $4$. Then, there are at least three consecutive elements in the string distinct from $X$, but this violates the condition in the statement.)
\end{proof}
}
\end{remark}

\begin{remark}\label{remark:newcycles}
Let $G$ be a union of vertex-disjoint paths and cycles, where there are $p$ paths and $c$ cycles.
Let $P_1,P_2\subseteq G$ be two non-degenerate vertex-disjoint subpaths of $G$.
Let $x$ be an endpoint of $P_1$ and~$y$ be an endpoint of $P_2$, and assume $e\coloneqq\{x,y\}\notin E(G)$.
Then, $(G\setminus(P_1\cup P_2))\cup\{e\}$ is a union of vertex-disjoint paths and cycles, with at most $p+1$ non-degenerate paths and at most $c+1$ cycles.
\COMMENT{
\begin{proof}
Let $G'\coloneqq (G\setminus(P_1\cup P_2))\cup\{e\}$.
The fact that $G'$ is a union of vertex-disjoint paths and cycles follows directly from the fact that no vertex increases its degree with respect to the degree in $G$, that is, $\Delta(G')\leq2$.\\
To derive the rest of the statement, we consider each possible case.
Here, for simplicity, we treat isolated vertices which are inner vertices of $P_1$ or $P_2$ as not being part of the graph (otherwise, they may increase the number of degenerate paths arbitrarily), but endpoints of $P_1$ or $P_2$ will be considered as paths if they are isolated.
\begin{enumerate}[label=\arabic*.]
    \item $P_1$ and $P_2$ lie in distinct paths.
    Then, the number of paths increases by one (each of the original paths is split into two, and two of the resulting four paths are joined by $e$, thus giving a total of three paths) and the number of cycles does not change.
    \item $P_1$ and $P_2$ lie in the same path.
    The removal of $P_1$ and $P_2$ results in a total of three subpaths.
    Now there are four possibilities for $e$.
    In three of them, two of these paths will be joined into a single path, thus resulting in a total of two paths; in the remaining case, one of the paths (the `middle' path) will be closed into a cycle.
    Overall, the number of paths increases by at most one, as does the number of cycles.
    \item $P_1$ and $P_2$ lie in distinct cycles.
    Then, the removal of the two paths results in two new paths, which are then joined by $e$ into a single path.
    Hence, the number of paths goes up by one, and the number of cycles does not increase.
    \item $P_1$ and $P_2$ lie in the same cycle.
    Again, the removal of $P_1$ and $P_2$ results in two new paths.
    In turn, there are two possibilities depending on which endpoints are joined by $e$: either we end up with a single path, or we end up with a path and a shorter cycle.
    In either case, the number of paths increases by one and the number of cycles does not increase.
    \item $P_1$ lies in a path, and $P_2$ lies in a cycle.
    In this case, one can readily check that we end up with two paths, so the number of paths goes up by one and the number of cycles does not increase.\qedhere
\end{enumerate}
\end{proof}
}
\end{remark}

\begin{remark}\label{remark:newcycles2}
Let $G$ be a union of vertex-disjoint paths and cycles.
Let $x,y,z,z'\in V(G)$ such that $e_1\coloneqq\{x,z\},e_2\coloneqq\{y,z'\}\notin E(G)$ and $x$ and $y$ lie in the same cycle $\cC$ of $G$.
Let $P_1,P_2,P_3\subseteq G$ be three non-degenerate vertex-disjoint subpaths of $G$ such that $P_1$ has $z$ as an endpoint, $P_2$ has $z'$ as an endpoint, and $P_3$ is an $(x,y)$-path.
Then, $(G\setminus(P_1\cup P_2\cup P_3))\cup\{e_1,e_2\}$ may contain a\COMMENT{At most $2$} cycle $\cC'\nsubseteq G$ only if at least one of the following holds: in $G\setminus(P_1\cup P_2\cup P_3)$, $x$ and $z$ lie in the same component, or $y$ and $z'$ lie in the same component, or $z$ and $z'$ lie in the same component.
\COMMENT{
\begin{proof}
Let $G'\coloneqq(G\setminus(P_1\cup P_2\cup P_3))\cup\{e_1,e_2\}$.
Note that $G'$ is a union of vertex-disjoint paths and cycles (this follows directly from the fact that no vertex increases its degree with respect to the degree in $G$, that is, $\Delta(G')\leq2$).\\
To derive the statement, we consider each possible case.
Here, for simplicity, we treat isolated vertices which are inner vertices of $P_1$, $P_2$ or $P_3$ as not being part of the graph, but endpoints of $P_1$ or $P_2$ will be considered as (degenerate) paths if they are isolated.
\begin{enumerate}[label=\arabic*.]
    \item $P_1$ and $P_2$ lie in distinct paths.
    Then, the number of paths increases by one (each of these original paths is split into two, and two of the resulting four paths are joined by $e_1$ and $e_2$ through $\cC\setminus P_3$, thus giving a total of three paths).
    No new cycle is created.
    \item $P_1$ and $P_2$ lie in the same path.
    The removal of $P_1$ and $P_2$ results in a total of three subpaths.
    Now there are different possibilities depending on where $z$ and $z'$ lie.
    If they lie in distinct components of the resulting graph (which are paths), then these are joined by $e_1$ and $e_2$ through $\cC\setminus P_3$ into a path, and no new cycle is created.
    However, if they lie in the same component, then the addition of $e_1$ and $e_2$ together with $\cC\setminus P_3$ forms a new cycle.
    \item $P_1$ and $P_2$ lie in distinct cycles (and distinct from $\cC$).
    Then, the removal of the two paths results in two new paths, which are then joined by $e_1$ and $e_2$ through $\cC\setminus P_3$ into a single path.
    Hence, the number of paths goes up by one, but no new cycles are created.
    \item $P_1$ and $P_2$ lie in the same cycle (distinct from $\cC$).
    Again, the removal of $P_1$ and $P_2$ results in two new paths.
    In turn, there are two possibilities depending on the positions of $z$ and $z'$.
    If they lie in distinct components of the resulting graph (which are paths), then these are joined by $e_1$ and $e_2$ through $\cC\setminus P_3$ into a path, and no new cycle is created.
    Otherwise, the addition of $e_1$ and $e_2$ together with $\cC\setminus P_3$ forms a new cycle.
    \item $P_1$ lies in a path, and $P_2$ lies in a cycle (other than $\cC$).
    In this case, the removal of $P_1$ and $P_2$ results in a total of three new paths, two of which are joined by $e_1$ and $e_2$ through $\cC\setminus P_3$ into a single path.
    No new cycles are created.
    \item In any other case, we must have that $\cC$ contains at least one of the paths $P_1$ or $P_2$.
    Assume $P_1\subseteq\cC$.
    Then, if after the removal of the paths $z$ lies in the same component of $x$, we create a new cycle.\qedhere
\end{enumerate}
\end{proof}
}
\end{remark}

We are finally ready to prove \cref{thm:matching}.
The general idea is similar to the proof of \cref{thm:2factor,thm:matching2}, though the details are more involved.
We will use \cref{lem:partition} to show that $H$ satisfies certain properties and combine these with the properties given by \cref{lem:fewcycles,lem:triangsand3paths}.
We will then consider the graph given by the union of $G_{n,1}$ and a large matching of~$H$, given by \cref{lem:partition}, and iteratively modify this graph to obtain cycles of all lengths.
To achieve this, we will construct a special path $P$ which will be used to absorb other vertices, in a similar fashion to the proof of \cref{thm:2factor,thm:matching2}, and then find an almost spanning cycle~$C$ containing~$P$.
A key difference with respect to that proof is that the iterative process by which we construct~$C$ is more involved, since the graph we start with is more structured.
Another key difference is that, while creating $C$, we will isolate some vertices which cannot be absorbed with~$P$; we need to make sure that these can be absorbed by the cycle itself at a later stage, so that we may find a Hamilton cycle (but these vertices play no role when constructing cycles shorter than $C$).
The properties given by \cref{lem:partition} are crucial in proving that our construction works.

\begin{proof}[Proof of \cref{thm:matching}]
By \cref{thm:matching2}, if $H$ contains a matching covering all but $o(n)$ vertices, we are done, so we may assume that the largest matching in $H$ covers at most $n-\sqrt{n}$ vertices.

Let $0<\eta\ll\epsilon\ll1$.
Throughout, we always assume $n$ is sufficiently large.
Apply \cref{lem:partition} with $\alpha\coloneqq(1+\epsilon)(\sqrt{2}-1)$ and $\beta=\eta$.
(Note, in particular, that we may assume $\alpha<1/2$.)
This yields a partition of $V(H)$ into $A\cupdot B_1\cupdot B_2\cupdot C_1\cupdot C_2\cupdot R$ satisfying properties \ref{partP1}--\ref{partP3} of the statement of the lemma.
We define $\gamma_1\coloneqq|C_1|/n$ and $\gamma_2\coloneqq|B_1|/n$, so by \ref{partP1} and \ref{partP4} it follows that 
\begin{equation}\label{equa:Rsize}
    |R|=(1-2\gamma_1-2\gamma_2)n \pm 12\eta^{-2}\geq\sqrt{n}/2
\end{equation}
(where the lower bound follows from the bound on the size of a maximum matching in~$H$).
Note that by \ref{partP2} and since $\alpha>\sqrt{2}-1$ we have\COMMENT{Indeed, we are guaranteed that $|B_1|\geq(\alpha-2\eta)n=(1-2\eta/\alpha)\alpha n\geq(1-6\eta)\alpha n$, where the last inequality follows from the fact that $\alpha>1/3$.}
\begin{equation}\label{extraneweq}
    \gamma_2 \geq (1-6\eta)\alpha.
\end{equation}
Observe that $\gamma_1\leq1/2-\gamma_2$.
It follows from this, \eqref{extraneweq}, the bound on $\alpha$, and by taking $\eta$ sufficiently small that\COMMENT{We have
\begin{align*}
    \gamma_1\leq1/2-\gamma_2\leq1/2-(1-6\eta)\alpha\leq1/2-(1-6\eta)(\sqrt{2}-1)=3/2-\sqrt{2}+6\eta(\sqrt{2}-1)\leq86/1000,
\end{align*}
where for the last inequality we need to take a sufficiently small $\eta$.
}
\begin{equation}\label{equa:gamma1bound}
    \gamma_1\leq86/1000.
\end{equation}
It follows from \eqref{extraneweq}, \ref{partP1}, \ref{partP3}, the fact that $e_H(v,C_1\cup C_2)\leq|C_1\cup C_2|\leq(1-2\gamma_2)n$, and the bounds on $\delta(H)$ and $\alpha$ that\COMMENT{Using that $|C_1\cup C_2|\leq(1-2\gamma_2)n$, we have that
\begin{align*}
    e_H(v,B_1)&=e_H(v,V(H))-e_H(v,B_2\cup R)-e_H(v,C_1\cup C_2)-e_H(v,A)\\
    &\geq\alpha n-\eta^{-1}-1-|C_1\cup C_2|-|A|\geq\alpha n-\eta^{-1}-1-(1-2\gamma_2)n-12\eta^{-2}\\
    &\geq(3\alpha-12\eta-1)n-12\eta^{-2}-\eta^{-1}-1\geq n/5.
\end{align*}}
\begin{equation}\label{equa:CedgestoB}
\begin{minipage}[c]{0.8\textwidth}\centering
for all $v\in C_2$ we have $e_H(v,B_1)\geq n/5$.
\end{minipage}\ignorespacesafterend 
\end{equation}
Indeed, for each $v\in C_2$ we have that
\begin{align*}
    e_H(v,B_1)&=e_H(v,V(H))-e_H(v,B_2\cup R)-e_H(v,C_1\cup C_2)-e_H(v,A)\\
    &\geq\alpha n-\eta^{-1}-1-(1-2\gamma_2)n-12\eta^{-2}\\
    &\geq(3\alpha-12\eta-1)n-12\eta^{-2}-\eta^{-1}-1\geq n/5.
\end{align*}
Furthermore, by \ref{partP2}, for any $x,y\in B_2\cup R$ we have $|N_H(x)\cap N_H(y)\cap B_1|\geq(2\alpha-\gamma_2-4\eta)n$.
It then follows from the fact that $\gamma_2\leq1/2$ and the bound on $\alpha$ that\COMMENT{We have $|N_H(x)\cap N_H(y)\cap B_1|\geq(2\alpha-\gamma_2-4\eta)n=(1-4\eta/(2\alpha-\gamma_2))(2\alpha-\gamma_2)n$.
Now, since $\gamma_2\leq1/2$, we have that $2\alpha-\gamma_2\geq2\sqrt{2}-5/2\geq3/10$, which means that $4/(2\alpha-\gamma_2)\leq40/3<20$.}
\begin{equation}\label{equa:RBintersect}
\begin{minipage}[c]{0.8\textwidth}\centering
for all $x,y\in B_2\cup R$ we have $|N_H(x)\cap N_H(y)\cap B_1|\geq(1-20\eta)(2\alpha-\gamma_2)n$.
\end{minipage}\ignorespacesafterend 
\end{equation}
Let $M$ be the union of a perfect matching in $H[C_1,C_2]$ and a perfect matching in $H[B_1,B_2]$.
For any $v\in V(M)$, we let $M(v)$ be the unique vertex $w$ such that $vw\in M$.
Similarly, for any set $A\subseteq V(M)$, we let $M(A)\coloneqq\bigcup_{v\in A}M(v)$.

Let $G=G_{n,1}$.
Throughout the rest of the proof, we will use the fact that $d_G(x)=1$ for every $x\in V(H)$ repeatedly and without further mention.
We now apply \cref{lem:triangsand3paths}, with $\eta$ playing the role of $\epsilon$, to the pairs of sets $(A_i,B_i)=(N_H(x),N_H(y))$ for all $(x,y)\in V(H)\times V(H)$ (with $\alpha_i=\beta_i=\alpha$) and the pairs of sets $(A_i,B_i)=(N_H(x)\cap N_H(y)\cap B_1,N_H(x)\cap N_H(y)\cap B_1)$ for all pairs $(x,y)\in (B_2\cup R)\times(B_2\cup R)$ (with $\alpha_i=\beta_i=(1-20\eta)(2\alpha-\gamma_2)$, see \eqref{equa:RBintersect}).
Condition on the event that the statement of \cref{lem:triangsand3paths} holds for these pairs of sets, and that the statements of \cref{lem:fewcycles}\ref{item:fewcycles2} and \cref{rem:fewcycles} hold, which occurs a.a.s.
Let $G_0\coloneqq M\cup G$.
We claim that the following properties hold:
\begin{enumerate}[label=(G\arabic*)]
    \item\label{item:PMup2} For every $(x,y)\in V(H)\times V(H)$, we have $e_G(N_H(x), N_H(y))\geq(1-\eta)\alpha^2n/2$.\COMMENT{The division by $2$, both here and in the next one, comes from counting edges twice in the case that $N_H(x)$ and $N_H(y)$ intersect (in the next one it is more clear, as every edge is counted twice).}
    \item\label{item:PMup3} For every $(x,y)\in (B_2\cup R)\times(B_2\cup R)$, we have 
    \[e_G(N_H(x)\cap N_H(y)\cap B_1)\geq (1-50\eta)(2\alpha-\gamma_2)^2n/2.\]
    \item\label{item:PMup4} $G_0$ is the union of at most $\log^2n$ cycles and $(1+o(1))|R|/2$\COMMENT{Here we are using that $|R|=\Omega(\sqrt{n})$, otherwise we would need further terms. Of course, we are also using \ref{partP1}.} paths, all vertex-disjoint.
    \item\label{item:PMup6} Each of the paths of $G_0$\COMMENT{Here, and in many other places, I hope it is clear that we mean `all paths which are connected components', and not all paths that can be found as subgraphs of $G$.} either has both endpoints in $A\cup R$ or is comprised of a single edge in $E(M)\cap E(G)$.
    Furthermore, every vertex in $A\cup R$ is the endpoint of some path.
    \item\label{item:PMup7} All subpaths of $G_0$ alternate between edges of $G$ and edges of $M$.
    \item\label{item:PMup5} $|E(M)\cap E(G)|\leq\log^2n$.
\end{enumerate}
Indeed, both \ref{item:PMup2} and \ref{item:PMup3} follow from \cref{lem:triangsand3paths}, \ref{item:PMup4} follows from \cref{lem:fewcycles}\ref{item:fewcycles2} together with \ref{partP4} and \eqref{equa:Rsize}\COMMENT{The point here is that we need to have $|R|=(1-o(1))(n-2|M|)$, and this follows by \ref{partP4}, and also that $|R|=\omega(\log^2n)$, which follows from the bound on the size of the maximum matching (which we have already incorporated into \eqref{equa:Rsize}).}, and \ref{item:PMup5} holds by \cref{rem:fewcycles}.
Finally, \ref{item:PMup6} and \ref{item:PMup7} must hold by definition.

A simple algebraic manipulation with \eqref{equa:Rsize}, \eqref{extraneweq} and the definition of $\alpha$ shows that\COMMENT{We want to show that 
\[(1-50\eta)(2\alpha-\gamma_2)^2\geq(1+\eta)|R|/n=(1+\eta)(1-2\gamma_1-2\gamma_2)\pm(1+\eta)12\eta^{-2}/n.\]
Since \[(1+\eta)(1-2\gamma_1-2\gamma_2)\pm(1+\eta)12\eta^{-2}/n\leq(1+3\eta/2)(1-2\gamma_1-2\gamma_2),\]
it suffices to verify that
\[(1-50\eta)(2\alpha-\gamma_2)^2\geq(1+3\eta/2)(1-2\gamma_1-2\gamma_2).\]
Let $\beta\coloneqq(1-50\eta)/(1+3\eta/2)\geq(1-100\eta)$, and note that we can make $\beta$ arbitrarily close to $1$ by adjusting the value of $\eta$.
Since $\gamma_1\geq0$, it suffices to check that
\[\beta(4\alpha^2-4\alpha\gamma_2+\gamma_2^2)\geq1-2\gamma_2.\]
Let us write $\alpha=(1+\epsilon)\alpha^*$, where $\alpha^*=\sqrt{2}-1$, and $\gamma_2=\alpha^*+\gamma'$, where $\gamma'\geq\epsilon\alpha^*/3$ (by \eqref{extraneweq}).
Then, we want to check that
\begin{align*}
    &4\alpha\beta(\alpha-\gamma_2)+\beta\gamma_2^2\geq1-2\gamma_2\\
    \iff&4\beta(1+\epsilon)\alpha^*(\epsilon\alpha^*-\gamma')+\beta((\alpha^*)^2+2\alpha^*\gamma'+(\gamma')^2)\geq1-2\alpha^*-2\gamma'\\
    \iff&\beta(\alpha^*)^2+2\alpha^*-1+(2+2\beta\alpha^*-4\beta(1+\epsilon)\alpha^*)\gamma'+4\beta(1+\epsilon)\epsilon(\alpha^*)^2+\beta(\gamma')^2\geq0.
\end{align*}
By the definition of $\alpha^*$, we have that $\beta(\alpha^*)^2+2\alpha^*-1\geq-100\eta\alpha^*$.
Furthermore, $4\beta(1+\epsilon)\epsilon(\alpha^*)^2+\beta(\gamma')^2\geq0$, so it suffices to check that
\[(2+2\beta\alpha^*-4\beta(1+\epsilon)\alpha^*)\gamma'\geq100\eta\alpha^*.\]
But $2+2\beta\alpha^*-4\beta(1+\epsilon)\alpha^*\geq1$ (for choices of $\epsilon$ and $\eta$ sufficiently small), so by the condition on $\gamma'$ we just need to check that
\[\gamma'\geq\epsilon\alpha^*/3\geq100\eta\alpha^*,\]
which holds since $\eta\ll\epsilon$.
}
\begin{equation}\label{equa:tightbound}
    (1-50\eta)(2\alpha-\gamma_2)^2n\geq(1+\eta)|R|.
\end{equation}
Using \eqref{extraneweq} it is also easy to check that $(1-3\eta)\alpha^2n\geq(1-50\eta)(2\alpha-\gamma_2)^2n$\COMMENT{We need to check that $(1-3\eta)\alpha^2\geq(1-50\eta)(2\alpha-\gamma_2)^2$.
By \eqref{extraneweq}, it suffices to check that $(1-3\eta)\alpha^2\geq(1-50\eta)(2\alpha-(1-6\eta)\alpha)^2=(1-50\eta)((1+6\eta)\alpha)^2$, that is, it suffices to check that
\[1-3\eta\geq(1-50\eta)(1+6\eta)^2=(1-50\eta)(1+12\eta+36\eta^2)=1-38\eta\pm\bigO(\eta^2),\]
which is clearly true for sufficiently small $\eta$.}, hence
\begin{equation}\label{equa:tightbound2}
    (1-3\eta)\alpha^2 n\geq(1+\eta)|R|.
\end{equation}
The bounds in \eqref{equa:tightbound} and \eqref{equa:tightbound2} are crucial for our proof.

\vspace{1em}

Our goal is to show that we can find cycles of all possible lengths in $H\cup G_0=H\cup G$; to achieve this, we will need to modify $G_0$ through an algorithm, where we will delete some edges of $G_0$ and add some edges of $H$ for each subsequent alteration.
The outcome of this process will be an `almost' spanning cycle $C$, and we will make sure to satisfy certain properties which will allow us to obtain all the desired cycles.

We begin with a high-level sketch of the process.
At each step of the algorithm, we always think of the graph as a union of vertex-disjoint paths and cycles; the vertex set can become smaller at each step, though, as we sometimes `delete' some vertices (which we will need to `absorb' at the end of the process).
We split the process into six steps (see Steps~\hyperlink{Step1}{1}--\hyperlink{Step6}{6} below).
In Step~\hyperlink{Step1}{1}, we simply remove all `isolated' edges of $G_0$, since these will not be useful for us.
In Step~\hyperlink{Step2}{2} we create an absorbing path $P\subseteq H\cup G_0$ that will allow us to incorporate a suitable number of vertices into a cycle; this choice will be used at the end of the process to guarantee that we can construct cycles of all lengths.
Crucially, we must make sure that $P$ is not modified through the remaining steps of the process.
At this point all paths in the graph\COMMENT{Meaning all components which are paths (e.g., $P$ itself does not satisfy this); we hope this distinction is clear throughout.} will have their endpoints in $A\cup B_2\cup R$ (when constructing $P$ we may produce new paths, but we make sure that the new endpoints lie in $B_2$).
We would like to have all their endpoints in $B_2\cup R$ so that we may use \ref{item:PMup3} in the future, so through Step~\hyperlink{Step3}{3} we will make it so that all endpoints are in $B_2\cup R$.
Step~\hyperlink{Step4}{4} is used to guarantee that $P$ does not lie in a cycle; this helps us avoid possible problems in Step~\hyperlink{Step5}{5}, where we turn all cycles into paths (making sure that all the resulting endpoints lie in $B_2\cup R$).
At this point, the graph we are considering is a union of vertex-disjoint paths with all their endpoints in $B_2\cup R$.
Then, using the aforementioned \ref{item:PMup3}, in Step~\hyperlink{Step6}{6} we can iteratively combine all the paths into a single, `almost' spanning path containing $P$, which we later turn into an almost spanning cycle $C$ containing $P$.
The bound given in \eqref{equa:tightbound} is crucial to prove that the almost spanning path can be constructed.
For simplicity of notation, we will always refer to the graph by the same name throughout each of the steps, but the graph is continuously updated in each step of the process.

As already mentioned above, throughout the process we sometimes `delete' some vertices from the graph.
What this means is that they no longer play a role in this process and will not be vertices of the resulting cycle $C$.
We will need to ensure that these vertices can later be `absorbed' into the cycle (some via $P$, and the rest without help from $P$).
We will denote this set of `deleted' vertices by $S$.
We think of these vertices simply as being isolated.
Note, however, that not all vertices of $G_0$ that become isolated through the process are added to $S$, as we will still allow some degenerate paths of length $0$ to be part of the graph.
Thus, we will always explicitly say which vertices are added to $S$ (whenever we do this, we mean that these vertices are removed from (the current version of) $G_0$, so we may keep thinking of $G_0$ as a union of paths and cycles with the desired properties\COMMENT{In particular, isolated vertices in $V(H)\setminus(B_2\cup R)$ would be bad for us, as we can never see them as paths with endpoints in the desired set.}).
In particular, we will always think of the graph at any point throughout the process as a graph on vertex set $V(H)\setminus S$ (and, when choosing vertices for any purpose before the end of Step~\hyperlink{Step6}{6}, we will always avoid $S$, even if not explicitly stated).

When altering the graph throughout the process, we will often need to use some edges of~$G$ which are `available' to us.
To keep track of these, we define a set~$D$ of `unavailable' edges.
In particular, all edges deleted from $G_0$ are automatically added to $D$ (so we will not add them explicitly), but some other edges are added to $D$ to ensure that our process will work (in particular, to `protect' $P$ throughout the process).
Whenever we add edges to $D$ without removing them from the graph, we say so explicitly.

Finally, for some of the alterations, given some vertex $v$, we will want to find a neighbour $b_1\in N_H(v)\cap B_1$.
When using these neighbours, we will want to delete the edge of $M$ containing them (this will help us to guarantee that the resulting graph remains a union of disjoint paths and cycles).
However, we cannot always delete this edge of $M$ (in particular, if it has already been deleted).
Therefore, it will be important for us to keep track of those vertices $b_1\in B_1$ which we cannot choose at any given step (they are also `unavailable').
We will denote the set of these vertices by~$K$.
As happens for $D$, we will often update $K$ implicitly; in particular, whenever an edge $b_1b_2\in E(M)$ with $b_1\in B_1$ is added to $D$, implicitly or explicitly, $b_1$ is added to $K$.
However, we will sometimes explicitly add extra vertices to this set (in particular, to `protect' $P$).
We remark that $K$ and $S$ need not be disjoint.

Let us now describe the steps of our process.
The fact that the sets $S$, $D$ and $K$ are updated immediately throughout the iterations in each of the steps is crucial in some of the choices we make below.
We also remark that we continually use the fact that the graph we consider is a union of vertex-disjoint paths and cycles, often implicitly.

\vspace{1em}

\hypertarget{Step1}{\textbf{Step 1.}}
For each edge $e=xy\in E(M)\cap E(G)$, delete $e$ from $G_0$ and add $x$ and $y$ to $S$.

Let the resulting graph be denoted by $G_1$.
We claim that the following properties hold:
\begin{enumerate}[label=(A\arabic*)]
    \item\label{equa:sizeboundsStep1} $|S|,|D|,|K|\leq2\log^2n$.
    \item\label{item:A1} $G_1$ is the union of at most $\log^2n$ cycles and $(1+o(1))|R|/2$ paths, all vertex-disjoint.
    \item\label{item:A2} Each of the paths of $G_1$ has both endpoints in $A\cup R$.
    Furthermore, $d_{G_1}(x)=1$ for every $x\in A\cup R$.
    \item\label{item:A3} All subpaths of $G_1$ alternate between edges of $G$ and edges of $M$.
\end{enumerate}
Indeed, \ref{equa:sizeboundsStep1} follows immediately by \ref{item:PMup5}, \ref{item:A2} follows by \ref{item:PMup6} and the deletions in Step~\hyperlink{Step1}{1}, and \ref{item:A1} and \ref{item:A3} follow from \ref{item:PMup4} and \ref{item:PMup7}, respectively.

In particular, note that \ref{item:A3} and the fact that all edges of $M$ are contained in $B_1\cup B_2\cup C_1\cup C_2$ implies the following:
\begin{enumerate}[label=(A\arabic*),start=5]
    \item\label{item:A5} If $e\in E(G_1[A\cup R])$, then $e$ is an isolated edge in $G_1$\COMMENT{That is, a component of $G_1$ consisting of this edge only}.
\end{enumerate}


\vspace{1em}

\hypertarget{Step2}{\textbf{Step 2.}}
Let $G_1'\coloneqq G_1$; this is a copy of the `original' $G_1$ which we will not update.
We now wish to construct an absorbing path $P$.
Let $U\subseteq R$ be an arbitrary set of $t\coloneqq\lceil3\eta^{-1}\alpha^{-2}\rceil$ vertices (such a set must exist by \eqref{equa:Rsize} and \ref{equa:sizeboundsStep1}\COMMENT{\ref{equa:sizeboundsStep1} is not really needed here, since by construction we have $R\cap(S\cup K)=\varnothing$, but this makes everything super clear even if you no longer remember what happened in the previous step.}).
Fix a labelling of the vertices in~$U$ as $u_1,\ldots,u_t$.
For each $i\in[t]$,
\begin{enumerate}[label=\arabic*.]
    \item choose an edge $x_iy_i\in E_G(N_H(u_i)\cap B_1)\setminus D$ and add it to $D$ (but do not remove it from $G_1$)\COMMENT{So, because we update $D$ in each step, this guarantees that the edges which are chosen here are distinct. Recall that, since they are distinct, these must indeed be disjoint.}.
\end{enumerate}
These edges $x_iy_i$ will later be part of $P$, and they are added to $D$ in order to `protect' $P$.
Their existence, for each $i\in[t]$, follows from \ref{item:PMup3}, \eqref{equa:tightbound}, \eqref{equa:Rsize}, \ref{equa:sizeboundsStep1}, and the value of $t$.

Let $U_E\coloneqq\{x_i,y_i:i\in[t]\}$ and $U_M\coloneqq M(U_E)\subseteq B_2$. 
Remove all the edges of $M$ incident to $U_E\setminus\{x_1,y_t\}$ from~$G_1$\COMMENT{Recall this means by the definition above that $U_E\setminus\{x_1,y_t\}$ is added to $K$. Since we will later choose edges which do not intersect this set, we are guaranteed that the path which we eventually construct does not contain any of these edges.} (in particular, by \ref{item:A3}, now each edge $x_iy_i$ becomes an isolated edge, and the endpoints of the deleted edges in $U_M$ become endpoints of some paths in the resulting graph) and, in order to `protect' $P$, add $x_1$ and $y_t$ to $K$, even if their respective edges in~$M$ are not removed from the graph.

Next we are going to connect the edges $x_iy_i$ we just chose into a path $P$.
We will achieve this by using a $(y_i,x_{i+1})$-path of length $3$ for each $i\in[t-1]$.
We are going to follow a process to choose a new set of edges in $G$, which will be the central edges of the aforementioned paths.
By the end of the process, we will have constructed a path $P$ satisfying the following properties:
\begin{enumerate}[label=(B\arabic*)]
    \item\label{item:P1} $V(P)\cap U=\varnothing$.
    \item\label{item:P3} $M(x_1),M(y_t)\in B_2$ are the endpoints of $P$.
    \item\label{item:P2} $P$ and $E(G)\setminus D$ are vertex-disjoint.
\end{enumerate}
Together with $P$, we will have a graph $G_2$ which satisfies the following properties:
\begin{enumerate}[label=(B\arabic*),start=4]
    \item\label{item:B0} $P\subseteq G_2$.
    \item\label{item:B1} $G_2$ is the union of at most $2\log^2n$ cycles and $(1+o(1))|R|/2$ paths, all vertex-disjoint.
    \item\label{item:B2} Each of the paths of $G_2$ has both endpoints in $A\cup B_2\cup R$.
    \item\label{item:B3} All subpaths of $G_2\setminus P$ have at most two consecutive vertices in $A\cup R\cup B_2\cup C_2$ or in $B_1\cup C_1$.
    \item\label{item:B101} If $e\in E(G_1'[A\cup R])$, then either $e\in E(P)$ or $e$ is an isolated edge in $G_2$.
\end{enumerate}

Let us now describe the process.
We note that \ref{item:A3} is crucial and will be used implicitly throughout.
Let $W_1\coloneqq U_E\cup M(\{x_1,y_t\})$.
For each $i\in[t]$, we will have a set $W_i$ which will consist of all vertices of $P$ which have already been chosen.
We claim that, at the start of each iteration of the process (that is, for each $i\in[t-1]$), we may assume that the following properties hold:
\begin{enumerate}[label=(B\arabic*),start=9]
    \item\label{equa:sizeboundsStep2} $|S|,|D|,|K|\leq3\log^2n$.
    \item\label{item:B100} $W_1\subseteq W_i$.
    \item\label{item:B14} $|W_i|=2t+2i$.
    \item\label{item:B12} $W_i\cap B_1\subseteq K$.
    \item\label{item:B13} $M(W_i\cap B_2)\subseteq K$.
    \item\label{item:B10} $G_1$ is a union of vertex-disjoint paths and cycles.
    \item\label{item:B11} All of the paths of $G_1$ have both endpoints in $A\cup B_2\cup R$, or have one endpoint in $A\cup B_2\cup R$ and the other is $y_i$, or have one endpoint in $A\cup B_2\cup R$ and the other is $x_t$, or consist of a unique edge $x_jy_j$ for some $j\in[t-1]\setminus[i]$.
    \item\label{item:B17} For any vertex $v\in B_2$ which is an endpoint of some path in $G_1$ we have $M(v)\in K$.
    \item\label{item:B16} All edges in $G_1\setminus G_1'$ are contained in $W_i$ or have one endpoint in $B_1$ and the other in $C_2$.
    \item\label{item:B18} All edges $e\in E(G_1'\setminus G_1)$ with $e\in M$ contained in $C_1\cup C_2$ are incident to $W_i$, or contained in~$S$, or satisfy that $\dist_{G_1'}(e,K)\leq1$.
    \item\label{item:B102} $N_{G_1}(A\cup U)\cap K\subseteq W_i$.
\end{enumerate}
Before starting the process (that is, for $i=1$), \ref{equa:sizeboundsStep2} holds by \ref{equa:sizeboundsStep1}, the value of $t$ and the fact that, so far in this step, we have increased the sizes of $S$, $D$ and $K$ by at most $3t$; \ref{item:B100} is trivial; \ref{item:B14} and \ref{item:B12} follow by the definition of $W_1$ and $K$; \ref{item:B13} holds trivially since $W_1\cap B_2=M(\{x_1,y_t\})\subseteq K$ by definition; \ref{item:B10} holds by \ref{item:A1}, since so far in this step we have only deleted edges; \ref{item:B11} and \ref{item:B17} follow from \ref{item:A2} and the fact that, so far, throughout this step we have only deleted edges of $M$ with one endpoint in $B_2$ (which then becomes an endpoint of some path) and the other endpoint in one of the edges $x_iy_i$ (but not the edges containing $x_1$ or $y_t$); \ref{item:B16} is vacuously true since $E(G_1\setminus G_1')=\varnothing$; \ref{item:B18} is vacuously true since all edges of $G_1'$ which have been deleted so far are contained in $B_1\cup B_2$, and \ref{item:B102} holds by construction, since at this point we have $K\subseteq W_1\cup S$ and $N_{G_1}(A\cup U)\cap S=\varnothing$ by the definition of $S$\COMMENT{That is, the idea that $G_1$ has vertex set $V(H)\setminus S$ by definition.}.
We will later show that these properties must indeed hold throughout.

For each $i\in[t-1]$, we proceed as follows:
\begin{enumerate}[label=\arabic*.,start=2]
    \item \hypertarget{step2.1}{Choose} an edge $w_iz_i\in E_G(N_H(y_i)\setminus U,N_H(x_{i+1})\setminus U)\setminus D$\COMMENT{Note in particular that, by removing $D$, we cannot choose an edge in $E(M)$, because these have already been removed.} such that
    \begin{enumerate}[label=(B\arabic*),start=20]
        \item\label{B21} $\dist_{G_1}(\{w_i,z_i\},W_i)\geq5$,
        \item\label{B22} $\dist_{G_1'}(\{w_i,z_i\},W_i)\geq2$, and
        \item\label{B23} $\dist_{G_1'}(\{w_i,z_i\},K)\geq5$.
    \end{enumerate}
    Note that such an edge must exist by \ref{item:PMup2}, \ref{equa:sizeboundsStep2}, \ref{item:B14}, \ref{item:B10}, and the value of $t$\COMMENT{From the lower bound on the number of edges given by \ref{item:PMup2}, we have to subtract at most $2|U|$ edges, then $|D|$ edges, and then all edges `close' to any vertex in $W_i$ (where we want to use the fact that $\Delta(G_0)=2$ to guarantee that these are just constant many).
    Furthermore, recall that we must also subtract all edges incident to $S$, which are at most $2|S|$.}.
    In order to `protect' $P$, add $w_iz_i$ to $D$ (but do not remove it from $G_1$).
    \item Now we make some modifications to ensure $G_1$ will remain a union of vertex-disjoint paths and cycles after adding $y_iw_i$ and $z_ix_{i+1}$.
    If the component of $G_1$ containing $w_iz_i$ is a cycle of length at most $8$, we remove all edges of this cycle (except $w_iz_i$) from $G_1$ and add all its vertices (except $w_i$ and $z_i$) to $S$\COMMENT{This is needed because, for very short cycles, case 4 below would read things which are not consistent. I have verified that cycles of length $4$ and $6$ are a problem, I think cycles of length $8$ or higher are okay. But since, because of the other cases, we remove up to $8$ edges, here it does not bother us to consider up to $8$ edges to remove, and then it is easier to check that the possibilities for $x$ do not interfere with each other.}.
    Otherwise, for each $x\in\{w_i,z_i\}$, we consider several cases:
    \begin{enumerate}[label=3.\arabic*.]
        \item \hypertarget{step2.2.1}{If} $x\in A\cup R$, do nothing.
        \item \hypertarget{step2.2}{If} $x\in B_1$, remove the edge of $M$ containing $x$ from $G_1$ (which must have belonged to $G_1$ by the definition of $K$ and \ref{B23}\COMMENT{By \ref{item:B11} and the choice of $x$ we know that $x$ cannot be an endpoint of any path; but this seems unnecessary since, by \ref{B23}, we are already guaranteed that the edge is there.})\COMMENT{Recall again that this implies we add $x$ to $K$.}.
        Note that $M(x)\in B_2$ becomes an endpoint of a path in the resulting graph.\COMMENT{Note: here and throughout, we sometimes abuse language when we claim that it becomes an endpoint. Indeed, it could be that this vertex was already an endpoint of some path, and in such a case, what we have is that the vertex becomes a degenerate path. In some sense, this vertex would be an endpoint with multiplicity $2$. (If one is careful, one can prove this actually does not happen here, but it will happen in other cases, and also we do not want to be so careful.)}
        \item \hypertarget{step2.3}{If} $x\in C_1$, consider the edge $xy\in E(M)$ (which must lie in $G_1$ by \ref{item:B18}, \ref{B21}, \ref{B22} and \ref{B23}\COMMENT{By \ref{item:B18}, if $xy\notin E(G_1)$, we must have that $x\in W_i$ (which contradicts \ref{B21}) or $y\in W_i$ (which contradicts \ref{B22}) or $\dist_{G_1'}(xy,K)\leq1$ (which contradicts \ref{B23}) or $xy\subseteq S$, which makes no sense.}) and let $z^*$ be the other neighbour of $y\in C_2$ in~$G_1$ (note that it must exist by \ref{item:B10} and \ref{item:B11}\COMMENT{Observe that this suffices since we know that $W_1\subseteq B_1\cup B_2$, so we do not need to worry about the other endpoints which are possible in \ref{item:B11}.}).
        Choose some vertex $z\in (N_H(y)\cap B_1)\setminus (K\cup N_{G_1}(A\cup U)\cup \{w_i,z_i,z^*\})$, which must exist by \eqref{equa:CedgestoB}, \ref{equa:sizeboundsStep2}, \ref{partP1}, and the value of $t$.
        Then, add $yz$ to $G_1$ and remove both $xy$ and the edge of $M$ containing $z$ (which must have belonged to $G_1$ by the definition of $K$).
        Now $M(z)\in B_2$ becomes an endpoint of a path in the resulting graph.
        \item \hypertarget{step2.4}{If} $x\in B_2\cup C_2$, consider the edge $xy\in E(M)$ (which must lie in $G_1$ by \ref{item:B18}, \ref{B21}, \ref{B22} and \ref{B23}\COMMENT{Assume first that $x\in B_2$.
        If $xy\notin E(G_1)$, then we must have $y\in K$.
        But then $x$ would not have been chosen by \ref{B23}.
        Assume now that $x\in C_2$.
        If $xy\notin E(G_1)$, then by \ref{item:B18} we must have $y\in W_i$ (the other options are $x\in W_i$, which would contradict \ref{B21}, or $\dist_{G_1'}(xy,K)\leq1$, which would contradict \ref{B23}, or $xy\subseteq S$, which just makes no sense).
        But this contradicts \ref{B22}.}) and let $z$ be the other neighbour of $y$ in~$G_1$ (which must exist by \ref{item:B100}, \ref{item:B10}, \ref{item:B11} and \ref{B21}\COMMENT{By \ref{item:B10} we must have that $y$ has degree one or two, and we want to prove it has degree $2$.
        Assume it has degree one.
        Then, by \ref{item:B11}, it must be one of the vertices $x_i$ or $y_i$ (since all other endpoints must not lie in $B_1$ and this is the only place where $y$ could lie and be an endpoint). 
        But by \ref{item:B100} and \ref{B21}, in this case we would not have chosen $x$.}).
        Now consider the following cases:
        \begin{enumerate}[label=3.4.\arabic*.]
            \item \hypertarget{step2.4.0}{If} $z\in A\cup R$, remove $xy$ and $yz$ from $G_1$ and add $y$ to $S$.
            We now think of $z$ as a degenerate path.
            \item \hypertarget{step2.4.1}{If} $z\in B_2$, let $z'$ be the other neighbour of $z$ (which must exist by \ref{item:B10}, \ref{item:B17}, \ref{item:B16} and \ref{B23}\COMMENT{By \ref{item:B10}, we know that $z$ must have degree one or two. We want to prove that it must have degree two.
            Assume it has degree one.
            Then, it must be the endpoint of some path.
            Then, by \ref{item:B17}, we must have that $M(z)\in K$. We now want to claim that this contradicts \ref{B23}.
            However, this is not immediately obvious, since it could be that the path $xyzM(z)$ contains some edge not in $G_1'$.
            But we already know that $xy\in E(M)$ and, by \ref{item:B16}, all added edges have one endpoint in $B_1$ and the other in $C_2$. Since $z\in B_2$, the path cannot contain any of these newly added edges, so it must have been a path of length $3$ in $G_1'$. So we reach our desired contradiction.}) and observe that, by \ref{item:B16} and \ref{B23}, we must have $zz'\in M$\COMMENT{Again, if this was not the case, we would have $M(z)\in K$, but then the path $xyzM(z)$ would contradict \ref{B23} (where in order to prove the contradiction we must use \ref{item:B16}).}.
            Remove $xy$ and $yz$ from $G_1$ and add $y$ to $S$.
            Furthermore, add $z'$ to $K$ (but do not remove $zz'$ from $G_1$).
            Then, $z\in B_2$ becomes the endpoint of a path in the resulting graph.
            \item If $z\in B_1$, let $zz'\in E(M)$ (which must lie in $G_1$ by \ref{item:B16}, \ref{B23} and the definition of~$K$\COMMENT{Assume otherwise.
            Then, by definition of $K$, we must have $z\in K$.
            But then, the path $xyz$ of length $2$ contradicts \ref{B23} (this contradiction uses the fact that no edge of this path has been added throughout this step, as follows from \ref{item:B16}).}), remove $xy$, $yz$ and $zz'$ from $G_1$, and add $y$ and $z$ to $S$.
            Now $z'\in B_2$ becomes an endpoint of a path.
            \item \hypertarget{step2.4.3}{If} $z\in C_2$, let $z^*$ be the other neighbour of $z$ in $G_1$ (which must exist by \ref{item:B10} and \ref{item:B11}), and choose some vertex $z'\in (N_H(z)\cap B_1)\setminus(K\cup N_{G_1}(A\cup U)\cup\{w_i,z_i,z^*\})$\COMMENT{At first thought, it must be that $z^*\in C_1$ and thus we do not need to remove it. However, we are applying changes throughout the process that might make $z^*$ change its position (in particular, $z^*\in B_1$ seems possible because of \ref{item:B16}). So to make our lives easier, we just forbid it.} (which must exist by \eqref{equa:CedgestoB}, \ref{equa:sizeboundsStep2}, \ref{partP1}, and the value of $t$).
            Then, add $zz'$ to $G_1$ and remove $xy$, $yz$ and the edge of $M$ containing $z'$ (which must lie in $G_1$ by the definition of $K$) from $G_1$, and add $y$ to $S$.
            Now $M(z')\in B_2$ becomes an endpoint of a path.
            \item \hypertarget{step2.4.4}{If} $z\in C_1$, let $zz'\in E(M)$ (which must lie in $G_1$ by \ref{item:B16}, \ref{item:B18}, \ref{B21} and \ref{B23}\COMMENT{Assume it does not.
            Then, by \ref{item:B18}, we must have that $z\in W_i$ (which, by \ref{item:B16}, would contradict \ref{B21}, since the path $xyz$ would be contained in $G_1'$) or $z'\in W_i$ (which, also by \ref{item:B16}, would contradict \ref{B21}) or $\dist_{G_1'}(zz',K)\leq1$ (which would contradict \ref{B23}) or $zz'\subseteq S$ (which does not make sense).}), let $z^*$ be the other neighbour of $z'\in C_2$ in $G_1$ (which must exist by \ref{item:B10} and \ref{item:B11}), and choose some vertex $z''\in (N_H(z')\cap B_1)\setminus(K\cup N_{G_1}(A\cup U)\cup\{w_i,z_i,z^*\})$ (which must exist by \eqref{equa:CedgestoB}, \ref{equa:sizeboundsStep2}, \ref{partP1}, and the value of $t$).
            Then, add $z'z''$ to $G_1$ and remove $xy$, $yz$, $zz'$ and the edge of $M$ containing $z''$ (which must lie in $G_1$ by the definition of $K$) from $G_1$, and add $y$ and $z$ to $S$.
            The vertex $M(z'')\in B_2$ becomes an endpoint of a path.
        \end{enumerate}
    \end{enumerate}
    \item Add $y_iw_i$ and $z_ix_{i+1}$ to $G_1$.
    Let $W_{i+1}\coloneqq W_i\cup\{w_i,z_i\}$ and iterate.
\end{enumerate}

Note now that \ref{equa:sizeboundsStep2} follows from \ref{equa:sizeboundsStep1} and the fact that $S$, $K$ and $D$ increase their sizes by at most~$8$ in each step of the process above\COMMENT{Each of these can potentially be bounded by a smaller number, but we do not really care what this number is. The upper bound is given by case \hyperlink{step2.4.4}{3.4.5}.}.
This, combined with the at most $3t$ increase before the process and the value of $t$, immediately yields the bound.
\ref{item:B100} holds trivially by definition.
\ref{item:B14} holds trivially by the definition of $W_{i+1}$, since $w_i,z_i\notin W_i$ by \ref{B21} and are distinct.
\ref{item:B12} and \ref{item:B13} hold because, throughout the process, if $x\in B_1\cup B_2$, we delete the edge of $M$ containing $x$, which implies the conditions.
\ref{item:B10} follows from the fact that, throughout, we guarantee that all vertices of the graph~$G_1$ have degree at most $2$: indeed, we make sure to delete one edge incident to each of the vertices that lie in one of the edges which will be added; the only exception to this is case~\hyperlink{step2.2.1}{3.1}, in which there is no need to delete edges by \ref{item:A2} and the fact that the different vertices $x$ throughout the process are always distinct.
In order to prove \ref{item:B11}, note that all the newly created endpoints lie in $B_2$, as remarked throughout the process.
Furthermore, the edges which are added to the graph in cases~\hyperlink{step2.3}{3.3}, \hyperlink{step2.4.3}{3.4.4} and~\hyperlink{step2.4.4}{3.4.5} have one of their endpoints in $C_2$ and the other in $B_1\setminus K$, which by \ref{item:B12} cannot belong to any of the vertices in $W_i$; this guarantees that the different paths consisting of a single edge $x_jy_j$ with $j\in[t-1]\setminus\{1\}$ do not become part of any longer paths or cycles until the iteration in which $w_{j-1}z_{j-1}$ is considered.
Observe that avoiding the vertex $z^*$ in cases~\hyperlink{step2.3}{3.3}, \hyperlink{step2.4.3}{3.4.4} and~\hyperlink{step2.4.4}{3.4.5} is crucial, as otherwise we would create an endpoint in $C_2$.
\ref{item:B17} follows directly from the process, since every time we create a new endpoint $v\in B_2$ we do so by deleting the edge of~$M$ containing $v$, except in case~\hyperlink{step2.4.1}{3.4.2}, where we artificially add $M(v)$ to $K$.
\ref{item:B16} holds directly by construction: in the $i$-th iteration, we add two edges, $y_iw_i$ and $z_ix_{i+1}$, which are contained in $W_{i+1}$ by definition, and possibly some edges with an endpoint in $B_1$ and the other in $C_2$ (this happens in cases~\hyperlink{step2.3}{3.3}, \hyperlink{step2.4.3}{3.4.4} and~\hyperlink{step2.4.4}{3.4.5}).
Now, consider \ref{item:B18}.
In case~\hyperlink{step2.3}{3.3}, the deleted edge is incident to a vertex which is added to $W_{i+1}$; the same is true for the edge containing $x$ in all subcases of case~\hyperlink{step2.4}{3.4}\COMMENT{This is relevant in the case when $x\in C_2$.}; by \ref{item:B16}, we are guaranteed that the other edge $e$ deleted in case~\hyperlink{step2.4.4}{3.4.5} now satisfies $\dist_{G_1'}(e,K)=1$\COMMENT{The vertex $z$ in that case is incident to $y$, and by \ref{item:B16} we are guaranteed that this edge must have belonged to $G_1'$; the vertex $y$ is then added to $K$ (in fact, to $S$), so we have what we wanted.}; the only other case in which one such edge may be deleted is when $w_iz_i$ is contained in a cycle $C$ of length at most $8$, but here all deleted edges are either incident to $W_{i+1}$ or contained in $S$.
Finally, \ref{item:B102} can be checked throughout the process by \ref{item:B11}, \ref{item:B16} and the choices in cases~\hyperlink{step2.3}{3.3}, \hyperlink{step2.4.3}{3.4.4} and~\hyperlink{step2.4.4}{3.4.5}\COMMENT{Choose some $x\in N_{G_2}(A\cup U)\cap K$. By \ref{item:B11}, this vertex cannot be an endpoint, but since it lies in $K$ and $N_{G_1}(A\cup U)\cap S=\varnothing$, either it lies in the path (in which case, we are done) or it must have seen its edge deleted, so since it is not an endpoint, it must have had some edge added to it. But by \ref{item:B16}, all added edges are contained in $V(P)$ (so done) or have an endpoint in $B_1$ and the other in $C_2$, as chosen in cases~\hyperlink{step2.3}{3.3}, \hyperlink{step2.4.3}{3.4.4}. But in all these cases we avoided $N_{G_2}(A\cup U)\cap K$ by design.}.

Note also that \ref{equa:sizeboundsStep2}--\ref{item:B13} hold after the last iteration of the process (that is, for $i=t$).
Similarly, \ref{item:B16} and \ref{item:B102} also hold after the process (that is, replacing $G_1$ by $G_2$ and with $i=t$).
These will come in useful later.

Let $G_2$ be the graph resulting from the process above.
Let 
\[P\coloneqq M(x_1)x_1y_1w_1z_1x_2\ldots y_{t-1}w_{t-1}z_{t-1}x_ty_tM(y_t)\]
(recall that all these vertices are distinct, as follows from the choices throughout the process).
Observe that \ref{item:P1}, \ref{item:P3} and \ref{item:P2} hold by definition.
In order to prove that \ref{item:B0} holds, note first that all the edges of $P$ must belong to $G_1$ at some point throughout the process, since $E(P)\cap E(G)\subseteq E(G_1')$, $x_1M(x_1),y_tM(y_t)\in E(G_1')$ and all other edges of $P$ are added in the fourth step of the process throughout the different iterations.
The fact that none of these edges are deleted throughout the process follows from \ref{item:B12}, \ref{item:B13} and \ref{B21}: indeed, throughout the process, all deleted edges either lie at distance at most $3$\COMMENT{With $3$ being achieved only in the case of a cycle of length $8$ containing $w_iz_i$.} from $w_i$ or $z_i$, so they cannot be incident to $W_i$ by \ref{B21}, or they have an endpoint in $B_1\setminus K$ and belong to $M$ (where we further ensure that said endpoint cannot be either~$w_i$ or~$z_i$), and therefore cannot be incident to $W_i$ by \ref{item:B12} and \ref{item:B13}.
Note, furthermore, that the edges deleted in cases~\hyperlink{step2.2}{3.2}, \hyperlink{step2.3}{3.3} and~\hyperlink{step2.4}{3.4} for $w_i$ cannot `interfere' with those deleted for $z_i$, since in these cases we are guaranteed that $\dist_{G_i\setminus\{w_iz_i\}}(w_i,z_i)\geq9$.
\ref{item:B1} holds by \ref{item:A1}, \ref{item:B10} and since, by \cref{remark:newcycles}, the number of paths and cycles does not increase too much throughout the process.
Indeed, the number of paths increases by at most~$2t$ due to the deletions before the process, and then by at most~$2t$ due to the deletions throughout the process, and at most~$2t$ new cycles are created overall (see \cref{remark:newcycles} for cases~\hyperlink{step2.3}{3.3}, \hyperlink{step2.4.3}{3.4.4} and~\hyperlink{step2.4.4}{3.4.5}).
This, combined with the bounds in \ref{item:A1}, \eqref{equa:Rsize} and the value of $t$, guarantees that \ref{item:B1} holds.
\ref{item:B2} is a direct consequence of \ref{item:B11} together with the final iteration of the process.
Now, by \ref{item:A3} we know that all subpaths of $G_1'$ have at most two consecutive vertices in $A\cup R\cup B_2\cup C_2$ or in $B_1\cup C_1$. 
By \ref{item:B16}, all edges added throughout the process are either contained in $W_t=V(P)$\COMMENT{In particular, contained in $P$ by definition.} or have one endpoint in $B_1$ and the other in $C_2$.
This guarantees that all subpaths of $G_2$ (except possibly those that intersect $P$) must also have at most two consecutive vertices in $A\cup R\cup B_2\cup C_2$ or in $B_1\cup C_1$.
That is, \ref{item:B3} holds.
Finally, \ref{item:B101} follows directly from \ref{item:A5} and \ref{item:B16}.

\vspace{1em}

\hypertarget{Step3}{\textbf{Step 3.}}
Our next goal is to obtain a graph $G_3$ which satisfies the following properties:
\begin{enumerate}[label=(C\arabic*)]
    \item\label{item:C1} $P\subseteq G_3$.
    \item\label{item:C2} $G_3$ is the union of at most $3\log^2n$ cycles and $(1+o(1))|R|/2$ paths, all vertex-disjoint.
    \item\label{item:C3} Each of the paths of $G_3$ has both endpoints in $B_2\cup R$.
    \item\label{item:C4} All subpaths of $G_3\setminus P$ have at most two consecutive vertices in $R\cup B_2\cup C_2$ or in $B_1\cup C_1$.
    \item\label{item:C5} $U\cap V(G_3)=\varnothing$.
\end{enumerate}
The goal of \ref{item:C5} is to have all vertices in $U$ isolated, so that they can later be absorbed by $P$.

In order to achieve this, roughly speaking, we are going to remove all edges incident to $(A\cup U)\setminus V(P)$ from $G_2$ and then perform a few more alterations on the graph to guarantee that \ref{item:C3} holds.
For these alterations, we are are going to follow a process.
We claim that, throughout, we may assume that the following properties hold:
\begin{enumerate}[label=(C\arabic*),start=6]
    \item\label{equa:sizeboundsStep3} $|S|,|D|,|K|\leq4\log^2n$.
    \item\label{item:C7} $G_2$ is a union of vertex-disjoint paths and cycles.
    \item\label{item:C8} Each of the paths of $G_2$ has both endpoints in $A\cup B_2\cup R$.
    \item\label{item:C9} All edges in $G_2\setminus G_1'$ are contained in $V(P)$ or have one endpoint in $B_1$ and the other in $C_2$.
\end{enumerate}
Note that, before we start the process, \ref{equa:sizeboundsStep3}, \ref{item:C7}, \ref{item:C8} and \ref{item:C9} are guaranteed by \ref{equa:sizeboundsStep2}, \ref{item:B1}, \ref{item:B2} and \ref{item:B16}, respectively.

We proceed as follows.
For each edge $e=xy\in E(G_2)$ with $x\in (A\cup U)\setminus V(P)$, we proceed as follows depending on the position of $y$.
Note that, by \ref{item:B1}, we are guaranteed that $y\notin V(P)\setminus M(\{x_1,y_t\})$\COMMENT{Assume $y\in V(P)\setminus M(\{x_1,y_t\})$. By \ref{item:B1}, we are guaranteed that $y$ cannot have degree $3$, so we must have $x\in V(P)$, a contradiction.}.
\begin{enumerate}[label=\arabic*.]
    \item If $y\in A\cup U\cup B_2$, remove $xy$ from $G_2$.
    \item If $y\in R\setminus U$, remove $xy$ from $G_2$ and add $y$ to $S$.
    Note that, by \ref{item:B101}, this does not delete any other edges of $G_2$.
    \item If $y\in B_1$, remove $xy$ and the edge of $M$ containing $y$ from $G_2$ (note that this edge must have been in $G_2$ by \ref{item:B102} and the definition of $K$, since $y\notin V(P)$; see \ref{item:P3}) and add $y$ to $S$. 
    Observe that $M(y)\in B_2$ becomes and endpoint in the resulting graph.
    \item \hypertarget{step3.3}{If} $y\in C_2$, let $z^*$ be the other neighbour of $y$ in $G_2$ (which must exist by \ref{item:C7} and \ref{item:C8}) and choose a vertex $z\in(N_H(y)\cap B_1)\setminus(K\cup\{z^*\})$ (its existence follows from \eqref{equa:CedgestoB} and \ref{equa:sizeboundsStep3}), add $yz$ to $G_2$, and remove $xy$ and the edge of $M$ containing $z$ from $G_2$. 
    \ref{item:C8} guarantees that $z$ does not become an endpoint in the resulting graph. 
    Instead, $M(z)\in B_2$ becomes an endpoint.
    \item \hypertarget{step3.4}{If} $y\in C_1$, let $z$ be the other neighbour of $y$ in $G_2$ (which exists by \ref{item:C7} and \ref{item:C8}), and note that $yz\in E(M)$ (which follows from \ref{item:C9}\COMMENT{By \ref{item:C7}, we know that $y$ must have degree $2$. If $yz\notin M$, we must have added some other edge incident to $y$, but since $y\in C_1$, \ref{item:C9} guarantees this is not the case.}), so $z\in C_2$.
    Let $z^*$ be the other neighbour of $z$ in $G_2$ (which must exist, again, by \ref{item:C7} and \ref{item:C8}).
    Then, choose a vertex $z'\in(N_H(z)\cap B_1)\setminus(K\cup\{z^*\})$ (recall that its existence is guaranteed by \eqref{equa:CedgestoB} and \ref{equa:sizeboundsStep3}), add $zz'$ to $G_2$, remove $xy$, $yz$ and the edge of $M$ containing $z'$ from $G_2$, and add $y$ to $S$.
    Again, by \ref{item:C8}, $z'$ does not become an endpoint, and $M(z')\in B_2$ does.
\end{enumerate}
Once this is done for all the desired edges, add $A\cup U$ to $S$.

Note that, by construction, the sizes of $S$, $D$ and $K$ increase by at most $3$ for each edge $xy$ deleted following the previous process.
It then follows by \ref{partP1}, the definition of $U$ in Step~\hyperlink{Step2}{2} and \ref{equa:sizeboundsStep2} that \ref{equa:sizeboundsStep3} holds throughout, and even after the last step of the process\COMMENT{They were all bounded by $3\log^2n$, and now increase by at most $3\cdot 2(|A|+|U|)\ll\log^2n$ (note the multiplication by $2$ is needed since some vertices in $A$ and $U$ may have degree $2$ in the current graph).}.
The remaining properties follow similarly as in Step~\hyperlink{Step2}{2}.
\ref{item:C7} follows from the fact that, throughout, we guarantee that all vertices of $G_2$ have degree at most $2$, since we delete one edge incident to each of the vertices that lie in one of the edges which will be added.
\ref{item:C8} holds since all the newly created endpoints lie in $B_2$, as remarked throughout the process.
Observe that avoiding $z^*$ in cases~\hyperlink{step3.3}{4} and \hyperlink{step3.4}{5} ensures that we do not create a new endpoint in $C_2$.
Finally, \ref{item:C9} holds since the edges added to the graph in cases~\hyperlink{step3.3}{4} and \hyperlink{step3.4}{5} have one endpoint in $B_1$ and the other in $C_2$ by construction.
Note that \ref{item:C9} also holds after the process, that is, replacing $G_2$ by $G_3$.

Let $G_3$ be the graph resulting from the process.
\ref{item:C1} must hold by \ref{item:B0} and since no edges incident to the internal vertices of $P$ are added nor deleted throughout the process\COMMENT{Edges incident to the endpoints, which lie in $B_2$ by \ref{item:P3}, may have been removed, but this is not a problem since the endpoints are in $B_2$.} (which follows from \ref{item:B12}, \ref{item:B13} and the fact that $y\notin V(P)\setminus M(\{x_1,y_t\})$ throughout).
Furthermore, the process above may again increase the number of paths and cycles, but, by \cref{remark:newcycles}, it creates at most one new cycle or path for each vertex $y$ above.
This, combined with \ref{partP1}, the definition of $U$ in Step~\hyperlink{Step2}{2}, \eqref{equa:Rsize} and \ref{item:B1}, ensures that \ref{item:C2} holds.
\ref{item:C3} follows by \ref{item:C8} since, at the end of the process, we have $A\cap V(G_3)=\varnothing$; similarly, we have that \ref{item:C5} holds too.
Finally, as happened in Step~\hyperlink{Step2}{2}, all edges added in cases~\hyperlink{step3.3}{4} and~\hyperlink{step3.4}{5} above have one endpoint in $B_1$ and the other in $C_2$ (see \ref{item:C9}), so by \ref{item:B3} we must have that \ref{item:C4} holds.

\vspace{1em}

\hypertarget{Step4}{\textbf{Step 4.}}
We now want to make sure that $P$ does not lie in a cycle.
If $P$ is not contained in a cycle, we are done, so assume it is contained in some cycle $\cC$.
If $|E(\cC)|\leq|E(P)|+14$, simply delete $E(\cC)\setminus E(P)$, and add $V(\cC)\setminus V(P)$ to~$S$.
Otherwise, for each $x\in \{M(x_1),M(y_t)\}$ (recall these are the endpoints of $P$ and that they lie in $B_2$, see \ref{item:P3}), we proceed as follows.
Let $y$ be the first vertex of $\cC$ which lies in $B_2\cup C_2$ when moving along $\cC$ away from $P$ starting at $x$ (disregarding $x$ itself).
Note that, by \ref{item:C4} and \cref{remark:trivial}, we have that $\dist_{\cC}(x,y)\leq3$.
Now, delete all edges of the shortest \mbox{$(x,y)$-path}\COMMENT{In particular, $D$ is updated here!} of $\cC$, and add all its internal vertices to $S$.
If $y\in B_2$, it becomes an endpoint and we are done; otherwise, choose a vertex $z\in(N_H(y)\cap B_1)\setminus(K\cup N_{G_3}(y))$ (which must exist by  \eqref{equa:CedgestoB} and \ref{equa:sizeboundsStep3}), add $yz$ to $G_1$, and delete the edge of $M$ containing $z$.
Note that, in this last case, $M(z)\in B_2$ becomes an endpoint in the resulting graph. 

Let $G_4$ be the resulting graph.
We claim that the following properties are satisfied:
\begin{enumerate}[label=(D\arabic*)]
    \item\label{equa:sizeboundsStep4} $|S|,|D|,|K|\leq5\log^2n$.
    \item\label{item:D1} $P\subseteq G_4$. Furthermore, the component of $G_4$ containing $P$ is a path.
    \item\label{item:D2} $G_4$ is the union of at most $4\log^2n$ cycles and $(1+o(1))|R|/2$ paths, all vertex-disjoint.
    \item\label{alg1} For each cycle $\cC\subseteq G_4$ we have that $|V(\cC)\cap B_1|=|V(\cC)\cap B_2|\pm5\log^2n$.
    \item\label{item:D3} Each of the paths of $G_4$ has both endpoints in $B_2\cup R$.
    \item\label{item:D4} All edges in $G_4\setminus G_1'$ are contained in $V(P)$ or have one endpoint in $B_1$ and the other in $C_2$.
    \item\label{item:D5} All subpaths of $G_4\setminus P$ have at most two consecutive vertices in $R\cup B_2\cup C_2$ or in $B_1\cup C_1$.
\end{enumerate}
Indeed, the sizes of $S$, $D$ and $K$ may increase by at most $14$ in this step, so \ref{equa:sizeboundsStep4} follows from \ref{equa:sizeboundsStep3}.
\ref{item:D1} follows from the construction, since no edges incident to any vertex of $P$ have been removed (this follows by the choice of the vertices $z$, \ref{item:B12} and \ref{item:B13}).
Note that $G_4$ is a union of paths and cycles since, again, we have made sure that every vertex has degree at most $2$.
Furthermore, even though the number of paths and cycles may have increased with respect to $G_3$, by \cref{remark:newcycles} we have that they increase by at most $2$.
Therefore, \ref{item:D2} follows from \ref{item:C2} and \eqref{equa:Rsize}.
\ref{alg1} follows by combining the bound on $|K|$ in \ref{equa:sizeboundsStep4} with \ref{item:A3}\COMMENT{Recall that, after Step~\hyperlink{Step1}{1}, all cycles alternate between edges of $G$ and edges of $M$ \ref{item:A3}. 
    It is the edges of $M$ which guarantee that $|V(\cC)\cap B_1|=|V(\cC)\cap B_2|$: if a vertex $v\in B_1$ lies in $\cC$, so must its neighbour in $B_2$ (and vice versa).
    Now, however, some of these edges have been removed, but we can bound the number of such edges by $|K|\leq5\log^2n$.
    Thus, for every vertex $v\in B_1$ which lies in $\cC$ (but at most $|K|$ of them), its corresponding vertex in $B_2$ $M(v)$ must also lie in $\cC$ (and the same argument can be written vice versa).
    This gives the desired conclusion.}.
\ref{item:D3} holds by construction and \ref{item:C3}. 
Note, in particular, that the choices of the vertices $z$ together with \ref{item:C2} guarantee that we do not create any new endpoints in $C_2$.
\ref{item:D4} follows from \ref{item:C9} and the fact that the only edges we may have added in this step have one endpoint in $B_1$ and the other in $C_2$.
This in turn, together with \ref{item:C4}, implies that \ref{item:D5} holds.

\vspace{1em}

\hypertarget{Step5}{\textbf{Step 5.}}
Now we want to obtain a graph $G_5$ which satisfies the following properties:
\begin{enumerate}[label=(E\arabic*)]
    \item\label{item:E1} $P\subseteq G_5$.
    \item\label{item:E2} $G_5$ is the union of at most $(1+o(1))|R|/2$ paths, all vertex-disjoint.
    \item\label{item:E3} Each of the paths of $G_5$ has both endpoints in $B_2\cup R$.
\end{enumerate}
In order to achieve this, we must alter each cycle in $G_4$.
Crucially, we will show that, with these alterations, we do not create any new cycles.
We claim that, throughout the coming process, we may assume that the following properties hold:
\begin{enumerate}[label=(E\arabic*), start=4]
    \item\label{equa:sizeboundsStep5} $|S|,|D|,|K|\leq105\log^2n$.
    \item\label{item:E6} $P\subseteq G_4$. Furthermore, the component of $G_4$ containing $P$ is a path.
    \item\label{item:E5} All subpaths of $G_4\setminus P$ have at most two consecutive vertices in $R\cup B_2\cup C_2$ or in $B_1\cup C_1$.
\end{enumerate}
Note that, before the process starts, \ref{equa:sizeboundsStep5}, \ref{item:E6} and \ref{item:E5} hold by \ref{equa:sizeboundsStep4}, \ref{item:D1} and \ref{item:D5}, respectively.

We proceed by iteratively choosing a cycle $\cC\subseteq G_4$\COMMENT{Note that we may destroy more cycles than just the one we choose.} and considering the following cases.
\begin{enumerate}[label=\arabic*.]
    \item \hypertarget{step5.1}{If} $\cC$ has length at most $25$, we delete all its edges and add all its vertices to $S$.
    \item \hypertarget{step5.2}{Otherwise}, assume $\cC$ contains two vertices $x,y\in B_2$ with $\dist_{\cC}(x,y)\leq11$ and let $Q\subseteq\cC$ be an $(x,y)$-path of length $\dist_{\cC}(x,y)$.
    Then, remove all edges of $Q$ from $G_4$ and add all its internal vertices to $S$.
    \item \hypertarget{step5.3}{Otherwise}, since all cycles in $G_4$ are disjoint from $R$ (as follows from \ref{item:A2}, \ref{item:D1}, \ref{item:D4} and the fact that no new cycles are created throughout this step), by \ref{item:E5}, we may apply \cref{remark:trivial} to conclude that $\cC$ must contain three vertices $x,y,z\in C_2$ such that $\dist_\cC(x,y)\leq3$ and $\dist_\cC(y,z)\leq3$.
    
    \begin{claim}\label{Claim3}
    There exist distinct $v_1,v_2\in\{x,y,z\}$ such that the following holds.
    
    Let $Q$ be the shortest $(v_1,v_2)$-path in $G_4$.
    For each $i\in[2]$, there exists $w_i\in(N_H(v_i)\cap B_1)\setminus(K\cup N_\cC(v_i))$ (with $w_1\neq w_2$) such that, if we let $e_i$ be the edge of $M$ containing $w_i$, then $w_1$, $w_2$ and $v_1$ each lie in a different component of $G_4\setminus(Q\cup\{e_1,e_2\})$.
    \end{claim}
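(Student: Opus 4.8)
The plan is to select, in this order, the pair $(v_1,v_2)\subseteq\{x,y,z\}$, then $w_1$, then $w_2$, keeping in mind throughout that, since $|E(\cC)|>25$ and $\dist_\cC(x,y),\dist_\cC(y,z)\leq3$, the shortest $(v_1,v_2)$-path $Q$ in $G_4$ is a subpath of $\cC$ of length at most $6$ whose at most five internal vertices also lie on $\cC$; hence $\cC^-\coloneqq\cC\setminus Q$ is a path with endpoints $v_1$ and $v_2$ (together with at most five isolated vertices coming from the interior of $Q$). By \eqref{equa:CedgestoB}, \ref{equa:sizeboundsStep5} and the bounded size of $N_\cC(v)$ and $V(Q)$, for each $v\in\{x,y,z\}$ the set $N_v\coloneqq(N_H(v)\cap B_1)\setminus(K\cup N_\cC(v)\cup V(Q))$ has size at least $n/6$, and for $w\in N_v$ the edge $e_w\coloneqq wM(w)$ lies in $E(G_4)$ (since $w\notin K$), is not incident to $v$ (since $v\in C_2$ while $e_w\subseteq B_1\cup B_2$), can be taken disjoint from $Q$ (after discarding the at most three $B_1$-vertices internal to $Q$), and is an edge of $\cC$ only if $w\in V(\cC^-)$.

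The key reduction is to find a \emph{clean} pair: $w_1\in N_{v_1}$, $w_2\in N_{v_2}$ with $w_1\neq w_2$, $e_1\coloneqq e_{w_1}$ and $e_2\coloneqq e_{w_2}$ not in $E(\cC)$, and $e_1,e_2$ in distinct components of $G_4$. For a clean pair, $\cC^-$ is an untouched component of $G_4\setminus(Q\cup\{e_1,e_2\})$ containing $v_1$, while $w_1$ and $w_2$ lie in the two distinct non-$\cC$ components of $G_4$ that contained $e_1$ and $e_2$; hence $w_1,w_2,v_1$ lie in three different components, which is exactly the conclusion wanted. Writing $N_v'\coloneqq\{w\in N_v:w\notin V(\cC)\}$ (so $e_w\notin E(\cC)$ for $w\in N_v'$), a clean pair is readily produced whenever two of $N_x',N_y',N_z'$ meet distinct components of $G_4$ — in particular, whenever some single $N_v'$ meets two components, or $\cC$ does not contain too large a fraction of $B_1$ (so that the $N_v'$ cannot all be concentrated in one component).

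This leaves the structured cases in which the clean construction fails: either $\cC$ contains a linear fraction of $B_1$ (so that, say, some $N_v'$ is empty), or all three sets $N_x',N_y',N_z'$ sit inside one common component $D\neq\cC$. In the latter case I would take $w_1\in N_{v_1}\cap D$, $w_2\in N_{v_2}\cap D$ for suitable distinct $v_1,v_2$ among $\{x,y,z\}$, so that $\cC^-$ is still an intact component of $G_4\setminus(Q\cup\{e_1,e_2\})$ containing $v_1$, and it then only remains to place $w_1$ and $w_2$ in different pieces of $D\setminus\{e_1,e_2\}$; writing $D$ (a path or cycle) along its natural order, deleting two $M$-edges of $D$ cuts it into three sub-paths (two arcs if $D$ is a cycle), and $w_1,w_2$ fail to be separated only in the single `nested' configuration, which is avoided by choosing $e_1$ or $e_2$ with the appropriate orientation of its $B_1$-endpoint (possible since $D$ carries linearly many $M$-edges of each orientation and $N_H(v_i)\cap B_1$ is linear). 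In the former case I would instead work inside the long path $\cC^-=p_0p_1\cdots p_L$ with $p_0=v_1$, $p_L=v_2$: choosing an $M$-edge $e_2=\{p_i,p_{i+1}\}$ of $\cC^-$ whose $B_1$-endpoint $p_{i+1}=:w_2$ lies in $N_H(v_2)$, and then an $M$-edge $e_1=\{p_j,p_{j+1}\}$ with $j>i$ whose $B_1$-endpoint $p_{j+1}=:w_1$ lies in $N_H(v_1)$, deleting $Q\cup\{e_1,e_2\}$ breaks $\cC^-$ into $p_0\cdots p_i$, $p_{i+1}\cdots p_j$, $p_{j+1}\cdots p_L$, which separates $v_1=p_0$, $w_2=p_{i+1}$ and $w_1=p_{j+1}$; the remaining components of $G_4$ (including any cycle other than $\cC$) are untouched and irrelevant.

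The main obstacle is precisely the orientation bookkeeping in these last two cases: guaranteeing that, after deleting the $O(\log^2 n)$ vertices of $K$ and the $O(1)$ vertices near $Q$, one can meet the `$B_1$-endpoint on the correct side' requirement for both $e_1$ and $e_2$ with the right relative position along the path, while keeping $w_1\in N_H(v_1)$, $w_2\in N_H(v_2)$ and $w_1\neq w_2$. This is the step that genuinely needs three candidate vertices $x,y,z$ rather than a fixed pair: if a particular pair forces an unworkable orientation, one discards it and passes to one of the remaining two pairs, also exploiting that swapping the roles of $v_1$ and $v_2$ reverses the traversal of the path.
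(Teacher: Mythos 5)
Your plan correctly handles the \enquote{clean pair} case (both $e_1,e_2$ outside $E(\cC)$ and in distinct components of $G_4$), and you correctly identify that the genuinely problematic configuration is when all three candidate sets of $B_1$-neighbours land inside a single component $D$. But the argument you give there is not a proof: the auxiliary claim that \enquote{$D$ carries linearly many $M$-edges of each orientation}, and that one can therefore pick $e_1$ or $e_2$ with a favourable orientation while still keeping the $B_1$-endpoint inside $N_H(v_i)\cap B_1$, is unjustified and there is no reason for it to hold. After Steps~1--5 the component $D$ is an amalgam of $M$-edges, $G$-edges and newly inserted $B_1$--$C_2$ edges, and a priori nothing prevents every $M$-edge of $D$ whose $B_1$-endpoint you are allowed to use from being oriented the \enquote{wrong} way along $D$. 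You flag this as \enquote{the main obstacle} and appeal to having three vertices $x,y,z$ and to swapping $v_1,v_2$, but you do not actually show how either of these circumvents an adversarial orientation, so the proof is incomplete at exactly the step that needs an idea.

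You also set up a case that does not exist: the branch in which \enquote{$\cC$ contains a linear fraction of $B_1$, so some $N_v'$ is empty} is vacuous here. The paper opens its proof of \cref{Claim3} by using \ref{item:E5}, \cref{remark:trivial}, \eqref{equa:gamma1bound} and \ref{alg1} to show $|V(\cC)\cap B_1|\le 3n/100$, while by \eqref{equa:CedgestoB} each $|N_H(v)\cap B_1|\ge n/5$ for $v\in C_2$, so each $N_v'$ is always linear. The paper's resolution of the hard case is also different from (and sharper than) your orientation bookkeeping: assuming the claim fails, it proves $X,Y,Z\subseteq V(F)$ for a single component $F$ and then uses the interleaving of positions along $F$ to bound the pairwise intersections ($|X\cap Y|\le 1$ if $F$ is a path, $\le 2$ if $F$ is a cycle, and likewise for the other pairs), which forces $|X\cup Y\cup Z|>n/2\ge|B_1|$, a contradiction. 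That counting argument is the missing ingredient in your proposal, and it sidesteps all orientation considerations.
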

    
    \begin{claimproof}
    Recall that any two vertices $x',y'\in V(\cC)\cap B_2$ satisfy that $\dist_\cC(x',y')\geq 12$.
    By \ref{item:E5} and \cref{remark:trivial}, this means that $|V(\cC)\cap B_2|\leq|V(\cC)\cap C_2|/3$\COMMENT{We have that at most one in every four vertices in $B_2\cup C_2$ lies in $B_2$, that is,
    \[|V(\cC)\cap B_2|\leq|V(\cC)\cap(B_2\cup C_2)|/4=|V(\cC)\cap B_2|/4+|V(\cC)\cap C_2|/4,\]
    and the claim follows by reordering.}.
    Therefore, by \eqref{equa:gamma1bound} we have that $|V(\cC)\cap B_2|\leq\gamma_1n/3\leq29n/1000$\COMMENT{$|V(\cC)\cap B_2|\leq|V(\cC)\cap C_2|/3\leq|C_2|/3=\gamma_1n/3\leq(1/2-\gamma_2)n/3\leq(3/2-\sqrt{2})n/3\approx 0.028595...n$.}.
    Then, by \ref{alg1}, we conclude that
    \begin{equation}\label{equa:B2intersbound}
    |V(\cC)\cap B_1|\leq3n/100.
    \end{equation}
    
    Aiming for a contradiction, let us assume that the statement does not hold, that is, for each pair $v_1,v_2\in\{x,y,z\}$ we have that, for every $w_1\in(N_H(v_1)\cap B_1)\setminus(K\cup N_\cC(v_1))$ and every $w_2\in(N_H(v_2)\cap B_1)\setminus(K\cup N_\cC(v_2))$ with $w_2\neq w_1$, if for each $i\in[2]$ we let $e_i$ be the edge of $M$ containing $w_i$ (which, recall, must lie in $G_4$ by the definition of $K$), then at least two of the vertices $w_1$, $w_2$ and $v_1$ lie in the same component of $G_4\setminus(Q\cup\{e_1,e_2\})$.
    Let $X\coloneqq(N_H(x)\cap B_1)\setminus(K\cup V(\cC))$, $Y\coloneqq(N_H(y)\cap B_1)\setminus(K\cup V(\cC))$ and $Z\coloneqq(N_H(z)\cap B_1)\setminus(K\cup V(\cC))$.
    In particular, every vertex in $X\cup Y\cup Z$ lies in an edge of $M$ in $G_4$.
    
    Consider $x$ and $y$, and let $x_1\in X$ and $y_1\in Y$ be distinct (such vertices exist by \eqref{equa:CedgestoB}, \ref{equa:sizeboundsStep5}, and \eqref{equa:B2intersbound}).
    Let $Q_{xy}$ be the shortest $(x,y)$-path in $\cC$, and let $e_x$ and $e_y$ be the edges of $M$ which contain $x_1$ and $y_1$, respectively.
    Our choice of $x_1$ and $y_1$ guarantees that, in $G_4\setminus(Q_{xy}\cup\{e_x,e_y\})$, they lie in a different component than $x$ (and $y$), so $x_1$ and $y_1$ must lie in the same component.
    Let $F$ be the component of $G_4$ containing $x_1$ and $y_1$.
    By fixing $x_1$, if any choice of $y_2\in Y\setminus\{x_1\}$ lies in a component of~$G_4$ different from~$F$, we would reach a contradiction, so we must have that $Y\subseteq V(F)$, and similarly we have $X\subseteq V(F)$.
    
    Assume first that $F$ is a path, and let $u_F$ and $v_F$ be its endpoints.
    Given any pair of vertices $a,b\in V(F)$, we write that $a<_Fb$ if a traversal of $F$ starting at $u_F$ reaches $a$ before~$b$.
    Now assume that $x_1,x_2\in X$ and $y_1\in Y$ satisfy that $x_1<_Fy_1<_Fx_2$.
    Then, upon deleting the edge of $M$ containing $y_1$, it cannot be in the same component as both $x_1$ and $x_2$, so we would reach a contradiction.
    Thus, we must have that $x_1<_Fy_1$ for all $x_1\in X$ and $y_1\in Y\setminus\{x_1\}$, or $y_1<_Fx_1$ for all $x_1\in X$ and $y_1\in Y\setminus\{x_1\}$. 
    In particular, this implies that $|X\cap Y|\leq1$.
    One can similarly show that, if $F$ is a cycle, then $|X\cap Y|\leq2$\COMMENT{This is equally easy to show, but a bit harder to write down properly.}.
    
    By following the same arguments as above when considering $y$ and $z$, and $x$ and $z$, we conclude that $Z\subseteq V(F)$ and that $|Y\cap Z|\leq2$ and $|X\cap Z|\leq2$.
    Combining this with \eqref{equa:CedgestoB}, \ref{equa:sizeboundsStep5}, and \eqref{equa:B2intersbound}, we have that
    $|X\cup Y\cup Z|\geq101n/200$\COMMENT{$|X\cup Y\cup Z|\geq|X|+|Y|+|Z|-6\geq3(n/5-3n/100-55\log^2n)-6=0.51n-165\log^2n-6\geq0.505n$.}.
    But $X\cup Y\cup Z\subseteq B_1$ and $|B_1|\leq n/2$, the final contradiction.
    \end{claimproof}
    
    Let $v_1,v_2,w_1,w_2,e_1,e_2$ be given by \cref{Claim3}.
    Delete $E(Q)$, $e_1$ and $e_2$ from $G_4$, add $v_1w_1$ and $v_2w_2$ to $G_4$, and add all internal vertices of $Q$ to $S$.
\end{enumerate}

Observe that, in cases~\hyperlink{step5.1}{1} and~\hyperlink{step5.2}{2}, we trivially cannot create any new cycles, since no edges are added to the graph.
By \cref{remark:newcycles2}, we are also guaranteed that we do not create a new cycle in case~\hyperlink{step5.3}{3}.
This, together with \ref{item:D2}, means that the process described above is repeated at most $4\log^2n$ times.
Since in each iteration we delete at most $25$ edges (with the bound coming from case~\hyperlink{step5.1}{1}), \ref{equa:sizeboundsStep5} follows from \ref{equa:sizeboundsStep4}.
The fact that $P\subseteq G_4$ holds since, by \ref{item:B12}, \ref{item:B13}, the fact that $P$ is not contained in a cycle and the choices throughout the process, no edges incident to $P$ are added nor deleted.
The fact that no new cycles are created then implies that \ref{item:E6} must hold throughout.
Finally, \ref{item:E5} follows from the fact that all added edges have one endpoint in $B_1$ and the other in $C_2$.
All three properties must also hold after the process is finished.

Let $G_5$ be the graph resulting from the process above.
\ref{item:E1} follows directly from \ref{item:E6}.
\ref{item:E2} follows from \ref{item:D2}, \eqref{equa:Rsize}, \ref{equa:sizeboundsStep5} and since the increase in the number of paths in each iteration of the above process is clearly bounded by~$3$.
Finally, \ref{item:E3} holds by \ref{item:D3} and the construction (in particular, note that the choice of $w_1$ and $w_2$ guarantees that no endpoint is created in $C_2$).

\vspace{1em}

\hypertarget{Step6}{\textbf{Step 6.}}
Recall that, together, the paths described in \ref{item:E3} cover all vertices of $V(H)\setminus S$.
We are now going to iteratively combine the paths which conform $G_5$ into a single path with the same vertex set.
We will later turn this path into a cycle and absorb all vertices of~$S$ into it.
For simplicity of notation, from now on we update $G_5$ as well as~$D$ in each step; $S$ and $K$, however, are no longer updated.

To be more precise, our aim is to obtain a graph $G_6$ which satisfies the following properties:
\begin{enumerate}[label=(F\arabic*)]
    \item\label{item:F1} $P\subseteq G_6$.
    \item\label{item:F2} $G_6$ consists of a unique path on vertex set $V(H)\setminus S$.
    \item\label{item:F3} The endpoints of the path of $G_6$ lie in $B_2\cup R$.
\end{enumerate}
To achieve this, we will follow a process, each iteration of which reduces the number of components of $G_5$ by one.
We claim that the following properties hold throughout:
\begin{enumerate}[label=(F\arabic*),start=4]
    \item\label{equa:Dsize} $|D|\leq(1+o(1))|R|/2$.
    \item\label{item:F5} $G_5$ is a union of vertex-disjoint paths.
    \item\label{item:F6} The endpoints of all paths of $G_5$ lie in $B_2\cup R$.
\end{enumerate}
Observe that, before the process starts, \ref{equa:Dsize} follows from \ref{equa:sizeboundsStep5} and \eqref{equa:Rsize}, \ref{item:F5} holds by \ref{item:E2}, and \ref{item:F6} follows from \ref{item:E3}.

We proceed as follows.
While $G_5$ contains at least two paths, choose any such two paths $P_1$ and~$P_2$, and let $x$ be an endpoint of $P_1$, and $y$ be an endpoint of $P_2$.
In particular, by \ref{item:F6}, $x,y\in B_2\cup R$.
Choose some edge $e=zz'\in E_G(N_H(x)\cap N_H(y)\cap B_1)\setminus D$ (which must exist by \ref{item:PMup3}, \eqref{equa:tightbound} and \ref{equa:Dsize}).
\begin{itemize}
    \item If $e\notin E(P_1)\cup E(P_2)$, add the edges $xz$ and $yz'$ to $G_5$ and remove $zz'$.
    \item Otherwise, suppose $e\in E(P_1)$ and that $\dist_{P_1}(x,z)<\dist_{P_1}(x,z')$ (the other cases are similar).
    Then, remove $zz'$ from $G_5$ and add $xz'$ and $yz$.
\end{itemize}
In both cases, we clearly reduce the number of paths by one\COMMENT{In the first case, we do not combine the two chosen paths into one, but we split a third path into two and combine each `half' with each of the chosen paths. In the second case, we combine both chosen paths into a single path.}.
Furthermore, in each step we delete exactly one edge from $G_5$.
This, together with \ref{item:E2} and \ref{equa:sizeboundsStep5}, guarantees that \ref{equa:Dsize} holds throughout (and, in particular, this implies the process can indeed be carried out).
\ref{item:F5} follows by construction, as does \ref{item:F6}, since no new endpoints are created throughout.

Let $G_6$ be the graph resulting from the process so far.
As the process ends, \ref{item:F2} and \ref{item:F3} follow by construction.
Finally, \ref{item:F1} holds since, in all cases above, no edges incident to $V(P)$ are removed or added to the graph, as guaranteed by \ref{item:P2}.

\vspace{1em}

We can now complete the proof.
Let the endpoints of the unique component of $G_6$ (see \ref{item:F2}) be $x$ and $y$.
By \ref{item:F3} we have $x,y\in B_2\cup R$, so by \ref{item:PMup3}, \eqref{equa:tightbound} and \ref{equa:Dsize} we can take some edge $e=zz'\in E_G(N_H(x)\cap N_H(y)\cap B_1)\setminus D$.
Assume without loss of generality that $\dist_{P'}(x,z)<\dist_{P'}(x,z')$.
Then, by \ref{item:P2} and \ref{item:F1}, removing $zz'$ from $G_3$ and adding $xz'$ and $yz$ results in a cycle $\mathcal{C}$ with the same vertex set and such that $P\subseteq\mathcal{C}$.
Let $m\coloneqq|S|$, so $\mathcal{C}$ has length $n-m$.
We must now prove that there is a cycle of length $k$, for all $3\leq k\leq n$.
We split our analysis into three cases.

Assume first that $n-m\leq k\leq n$.
Consider a set $J\subseteq S$ with $|J|=k+m-n$.
For each $w\in J\setminus U$, choose a distinct edge $e_w=x_wy_w\in E_G(N_H(w))\setminus D$ (recall that for each $w\in U$ we have $w=u_i$ for some $i\in[t]$ and we already defined an edge $e_w\coloneqq x_iy_i$ in Step~\hyperlink{Step2}{2}). 
Note that \ref{item:PMup2}, \eqref{equa:tightbound2}, \ref{equa:sizeboundsStep5} and \ref{equa:Dsize} guarantee that there is a choice of edges as desired.
Then, for each $w\in J$, replace $e_w$ by the path $x_wwy_w$.
This clearly results in a cycle of the desired length.

Suppose next that $3\leq k\leq\alpha^2n/10$.
In such a case, consider any subpath $P'\subseteq \mathcal{C}$ of length $k-3$, and let its endpoints be $x$ and $y$\COMMENT{When $k=3$ we have a degenerate case with $x=y$.}.
Now choose any edge $zz'\in E_G(N_H(x),N_H(y))$ such that $z,z'\notin V(P')$ (the existence of such an edge follows by \ref{item:PMup2} and \ref{equa:sizeboundsStep5}).
Then, the union of $P'$ and the path $xzz'y$ forms a cycle of length $k$.

Finally, assume $\alpha^2n/10<k<n-m$.
Consider a subpath $P'\subseteq \mathcal{C}$ of length $k-3$ such that $P\subseteq P'$.
Let the endpoints of $P'$ be $x$ and $y$, respectively, and let $Z\coloneqq E_G(N_H(x),N_H(y))\setminus D$ (for notational purposes, when we write $zz'\in Z$ we assume that $z\in N_H(x)$ and $z'\in N_H(y)$).
Note that \ref{item:PMup2}, \eqref{equa:tightbound2} and \ref{equa:Dsize} imply that 
\begin{equation}\label{equa:Zbound}
    |Z|\geq \eta\alpha^2n.
\end{equation}
Recall that, by \ref{item:P2}, $Z$ and $P$ are vertex-disjoint.
We consider the following three cases.
\begin{enumerate}[label=\arabic*.]
    \item Assume that there exists $zz'\in Z$ such that $z,z'\notin V(P')$.
    Then, the union of $P'$ and the path $xzz'y$ forms a cycle of length $k$.
    \item Otherwise, let $Z'\coloneqq\{e\in Z:e\subseteq V(P')\}$, so we have that $Z'\subseteq E(P')$ and
    \begin{equation}\label{equa:Zbound2}
        |Z'|\geq|Z|-2\geq\eta\alpha^2n/2
    \end{equation}
    by \eqref{equa:Zbound}.
    Suppose there is an edge $zz'\in Z'$ such that $\dist_{P'}(x,z')<\dist_{P'}(x,z)$.
    If so, then $(P'\setminus\{zz'\})\cup\{xz,yz'\}$ is a cycle of length $k-2$ which contains $P$. 
    To obtain a cycle of length~$k$, replace $x_1y_1$ and $x_2y_2$ by the paths $x_1u_1y_1$ and $x_2v_2y_2$, respectively.
    \item Otherwise, $Z'\subseteq E(P')$ and all $zz'\in Z'$ satisfy that $\dist_{P'}(x,z)<\dist_{P'}(x,z')$.
    Note that all edges in $Z'$ are pairwise vertex-disjoint by definition.
    Choose two edges $zz',ww'\in Z'$ with $\dist_{P'}(x,z)<\dist_{P'}(x,w)$ which minimise $\dist_{P'}(z,w)$ over all possible pairs of edges.
    Let~$P''$ be the $(z',w)$-subpath of $P'$, and let $\ell\coloneqq\dist_{P'}(z,w)$.
    By an averaging argument using \eqref{equa:Zbound2}, it follows that $1\leq\ell\leq2|V(P')|/(\eta\alpha^2n)<2\eta^{-1}\alpha^{-2}<t$.
    In particular, this shows that $P\nsubseteq P''$, so we must have $P\subseteq P'\setminus P''$.
    Then, the graph $(P'\setminus E(P''))\cup\{xw,yz'\}$ is a cycle of length $k-\ell$ which contains $P$.
    In order to obtain a cycle of length $k$, replace $u_1,\ldots,u_{\ell}$ by $x_1u_1y_1,\ldots,x_{\ell}u_{\ell}y_{\ell}$.\qedhere
\end{enumerate}
\end{proof}


\section{Concluding remarks and open problems}\label{sect:concl}

Binomial random graphs and random regular graphs are perhaps the two most studied random graph models.
As we have mentioned in the introduction, many results about randomly perturbed graphs have been obtained for the binomial random graph; it thus seems natural to study analogous problems in random regular graphs. 
We believe it would be interesting to study graphs perturbed by a random graph with a fixed degree sequence as well.
Very recently, the first author has also considered Hamiltonicity of graphs perturbed by a random geometric graph~\cite{ED21}.
Of course, this study should also be extended to other graph properties.

As we have observed, the behaviour of the graph $H$ perturbed by $G_{n,d}$ when $d=1$ and $d=2$ is quite different.
When $d=1$, we have shown that, if $\delta(H)\geq\alpha n$ with $\alpha>\sqrt{2}-1$, then a.a.s.~$H\cup G_{n,1}$ is Hamiltonian, but the same is not necessarily true if $\alpha<\sqrt{2}-1$.
On the other hand, for $d=2$, we have shown that $H\cup G_{n,2}$ is a.a.s.~Hamiltonian for far sparser graphs $H$ ($\delta(H)=\omega(n^{3/4}(\log n)^{1/4})$ suffices).
We believe this is far from optimal and should be true for even sparser graphs $H$.
We thus propose the following question.

\begin{question}
What is the minimum $f=f(n)$ such that, for every $n$-vertex graph $H$ with $\delta(H)\geq f$, a.a.s.~$H\cup G_{n,2}$ is Hamiltonian?
\end{question}

The only lower bound we can provide for this question is of order $\log{n}$, which is very far from the upper bound given by \cref{thm:2factor}.
Indeed, consider an $n$-vertex complete unbalanced bipartite graph $H=(A,B,E)$ where $|A|=\log{n}/5$.
It follows by a standard concentration argument (in the proof of \cref{lem:fewcycles}\ref{item:fewcycles1}, one can see that the variables $X_i$ are actually independent, hence standard Chernoff bounds are applicable) that a.a.s.~$G_{n,2}[B]$ contains at least $\log{n}/2$ cycles\COMMENT{The number of cycles of $G_{n,2}$ concentrates around $\log n$. Now, each vertex in $A$ can mean that at most one of the cycles is not contained within $B$, thus at most $\log n/5$ cycles are not contained in $B$.}.
Upon conditioning on this event, it is easy to check that $H\cup G_{n,2}$ does not contain a Hamilton cycle. 

From an algorithmic perspective, by retracing our proofs of \cref{thm:2factor,,thm:matching,,thm:matching2} as well as \cref{lem:partition}, it easily follows that, given an $n$-vertex graph $H$ with $\delta(H)\geq\alpha n$ and any $d$-regular graph $G$ which satisfies the statements of \cref{lem:fewcycles,lem:triangsand3paths} (the latter with respect to the sets defined in each of the proofs, which can be checked in polynomial time), there is a polynomial-time algorithm that finds cycles of any given length in $H\cup G$.

One could consider a generalisation of the results we have obtained for random perfect matchings by considering random $F$-factors (where an $F$-factor is a union of vertex-disjoint copies of $F$ which together cover the vertex set), for some fixed graph $F$, assuming the necessary divisibility conditions.
In general, we believe the behaviour here will be similar to that of random perfect matchings: if~$G$ is a uniformly random $n$-vertex $F$-factor, there will exist a specific value $\alpha^*=\alpha^*(F)\in(0,1/2)$, independent of $n$, such that, for every $\epsilon>0$, the following hold:
\begin{itemize}
    \item for every $n$-vertex graph $H$ with $\delta(H)\geq(1+\epsilon)\alpha^*n$, a.a.s.~$H\cup G$ is Hamiltonian, and
    \item there exists some $n$-vertex graph $H$ with $\delta(H)\geq(1-\epsilon)\alpha^*n$ such that $H\cup G$ is not a.a.s. Hamiltonian.
\end{itemize} 
\Cref{thm:matching} asserts that $\alpha^*(K_2)=\sqrt{2}-1$.
We propose the following conjecture.

\begin{conjecture}
For all $r\geq2$, we have that $\alpha^*(K_r)$ is the unique real positive solution to the equation $x^r+rx-1=0$\COMMENT{This is a number smaller than $1/r$ and which tends to $1/r$ as $r$ goes to infinity.}\COMMENT{A silly little remark: while in this paper we have proved that the conjecture holds for $r=2$, observe that it also holds for $r=1$, in which case it is a relaxation of Dirac's theorem.}.
\end{conjecture}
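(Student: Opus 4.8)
I would take $H$ to be the complete unbalanced bipartite graph with parts $A,B$, $|A|=\alpha n$ (so $\delta(H)=\alpha n$), and let $G$ be a uniform $K_r$-factor. Since $H[B]=\varnothing$, the graph $(H\cup G)-A$ equals $G[B]$, which is a vertex-disjoint union of cliques, one for each copy of $K_r$ in $G$ that meets $B$; hence its number of components is exactly $n/r$ minus the number of copies of $K_r$ contained in $A$. Using that in a uniform $K_r$-factor the $r$-set of a uniformly chosen copy is asymptotically a uniform $r$-subset of $V(H)$, the expected number of copies inside $A$ is $(1+o(1))\alpha^r n/r$, and a martingale/switching argument on a configuration-type model for $K_r$-factors (the analogue of \cref{lem:martingale}) gives concentration. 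Thus a.a.s.\ $(H\cup G)-A$ has at least $(1-o(1))(1-\alpha^r)n/r$ components, and since $\alpha<\alpha^*(K_r)$ means $(1-\alpha^r)/r>\alpha$, this exceeds $|A|$ and $H\cup G$ is not Hamiltonian. (For $r=2$ this is precisely the extremal construction of \cref{thm:matching}, as $\sqrt2-1$ is the root of $x^2+2x-1$.)

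\textbf{The upper bound ($\alpha\geq(1+\epsilon)\alpha^*(K_r)$ forces a.a.s.\ pancyclicity).} Here the plan is to transplant the proof of \cref{thm:matching} almost line by line, replacing the random perfect matching by the random $K_r$-factor and a maximum matching of $H$ by a maximum $K_r$-tiling of $H$. As in \cref{sect:1factor}, I would first dispose of the case where $H$ has a $K_r$-tiling covering all but $o(n)$ vertices via a \cref{thm:matching2}-style argument, and then assume the maximum $K_r$-tiling of $H$ has linear deficiency. The engine requires three auxiliary facts about the $K_r$-factor: (i) a configuration model with Azuma-type concentration, replacing \cref{lem:martingale,lem:simple}; (ii) an analogue of \cref{lem:fewcycles} stating that overlaying a near-optimal $K_r$-tiling of $H$ with $G$ yields only a sublinear number of cycles plus a controlled number of paths, where one exploits that each $K_r$ of $G$ can be traversed as a Hamilton path between any two of its vertices, or split into two sub-paths; and (iii) an analogue of \cref{lem:triangsand3paths} saying that for every pair of large sets $S,T$, almost every vertex of $S$ lies in a copy of $K_r$ of $G$ that also meets $T$. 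With these in hand one runs the six-step algorithm of \cref{sect:1factor}: build an absorbing path $P$, move all path-endpoints into the "good" part, destroy the cycle through $P$, turn all cycles into paths, merge everything into an almost-spanning cycle $C\supseteq P$, and finally absorb the set-aside vertices to realise every cycle length.

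\textbf{The structural lemma and the threshold.} The heart of the argument is the analogue of \cref{lem:partition}: when $H$ has no near-perfect $K_r$-tiling, I would run the same kind of iterative "augmenting" process on a maximum $K_r$-tiling $M$ of $H$ (generalising \cref{claim:properties}) to partition $V(H)$ into a bounded exceptional set $A$, two parts $B_1,B_2$ admitting a perfect $K_r$-tiling of $H$ with almost all vertices outside having $\geq(\alpha-o(1))n$ neighbours into $B_1$, a part $C=C_1\cup C_2$ whose vertices send few edges into $B_2\cup R$, and a leftover set $R$ with $|R|$ equal to $r$ times the deficiency of $M$. The quantitative miracle that pins the threshold is the analogue of inequality~\eqref{equa:tightbound}: one must show that $|R|$ is, after this decomposition, at most $(1-o(1))$ times the number of $K_r$-copies of $G$ that $H$ makes available to join endpoints through the good parts, and — exactly as in the derivation of \eqref{equa:tightbound} from the definition of $\alpha=(1+\epsilon)(\sqrt2-1)$ — this reduces to evaluating the polynomial $x^r+rx-1$ at $\alpha^*(K_r)$ and using $\epsilon>0$ to absorb the error terms.

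\textbf{The main obstacle.} Beyond carrying out the above bookkeeping, the genuine difficulty that is absent for $r=2$ is that a copy of $K_r$ with $r\geq 3$ can be incorporated into a path or cycle in many combinatorially distinct ways (fully, as a sub-path joining an arbitrary pair of its vertices, or broken into two or more pieces), so maintaining the invariants of the six-step process — in particular tracking which vertices and edges remain "available", keeping $P$ protected, and proving that no spurious short cycles are ever created (the $K_r$-analogues of \cref{remark:newcycles,remark:newcycles2}) — becomes substantially heavier than the already intricate analysis of \cref{sect:1factor}. A further point requiring care is simply guaranteeing a large enough $K_r$-tiling, or the correct weaker substructure, inside $H$ from the hypothesis $\delta(H)\geq(1+\epsilon)\alpha^*(K_r)n$ alone, since $\alpha^*(K_r)<1/r$ lies well below the Hajnal--Szemer\'edi threshold $(1-1/r)n$; one expects a fractional-tiling or LP argument to supply a $K_r$-tiling of the required deficiency, and making this quantitatively match \eqref{equa:tightbound} is where I would expect to spend the most effort.
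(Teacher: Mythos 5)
This is stated as a \emph{conjecture} in the paper: there is no proof to compare against, only the heuristic discussion in \cref{sect:concl}, which contains exactly the lower-bound construction (complete unbalanced bipartite graph plus a first-moment calculation) and explicitly leaves the upper bound open. Your lower-bound argument reproduces that heuristic faithfully --- the identity ``number of components of $G[B]$'' $= n/r - (\text{number of }K_r\text{-copies in }A)$ and the resulting expectation $\tfrac{n}{r}(1-\alpha^r)$ match the paper's computation, and your inequality $(1-\alpha^r)/r>\alpha$ for $\alpha<\alpha^*$ is correct. One small simplification: the martingale/switching concentration you invoke is more than is needed, since the definition of $\alpha^*$ only asks for ``not a.a.s.\ Hamiltonian'', which Markov's inequality already delivers (this is the parenthetical in the paper's own discussion).

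For the upper bound, your plan reads as a line-by-line port of the proof of \cref{thm:matching}, but there is a genuine obstruction, more serious than the bookkeeping issues you flag as ``the main obstacle'': the $r=2$ proof is anchored on a \emph{maximum matching} of $H$, whose deficiency is controlled via augmenting paths (\cref{lem:partition} and \cref{claim:properties} manufacture such augmenting paths), and a matching always exists. For $r\geq 3$ there is no analogue: a graph with $\delta(H)\geq\alpha n$ and $\alpha<1/r$ can be $K_r$-free altogether --- indeed the extremal $H$ in your own lower bound is bipartite, hence triangle-free. The step where you ``assume the maximum $K_r$-tiling of $H$ has linear deficiency'' is therefore vacuous (the deficiency may be $n$), and the fractional-tiling/LP fallback you suggest cannot produce a $K_r$-tiling in a $K_r$-free graph. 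The right generalisation of the matching $M\subseteq H$ that gets superimposed on $G$ is not obviously a $K_r$-tiling of $H$; plausibly it should remain a matching (or some other path- or connector-like substructure of $H$), with the $K_r$-copies coming exclusively from $G$, and the structural lemma would then have to be reinvented from scratch rather than adapted. Identifying that structure, and re-deriving the analogue of \eqref{equa:tightbound} from it, is precisely what makes this an open problem rather than a routine extension; as it stands, your proposal does not resolve it.
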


The lower bound for the conjectured value of $\alpha^*(K_r)$ is given by the same extremal example as for perfect matchings, that is, a complete unbalanced bipartite graph.
Indeed, consider a complete bipartite graph with parts $A$ and $B$, where $|A|=\alpha n$ and $|B|=(1-\alpha)n$, and let $G$ be a uniformly random $n$-vertex $K_r$-factor.
We are going to estimate the number of cliques of each size in $G[B]$.
For each $s\in[r]$, let $X_s$ be the number of components of $G[B]$ which are isomorphic to $K_s$.
Fix a vertex $v\in B$.
For each $s\in[r]$, the probability that the component containing $v$ in $G[B]$ is isomorphic to $K_s$ is (roughly) given by\COMMENT{Since $v$ lies in a copy of $K_r$, we must see how many of the copies of $K_r$ containing $v$ (roughly $\inbinom{n}{r-1}$) contains exactly $s$ vertices in $B$, and this is given by the first term.}
\[\binom{(1-\alpha)n}{s-1}\binom{\alpha n}{r-s}\frac{1}{\binom{n}{r-1}}.\]
Thus, we have that\COMMENT{The $1/s$ term compensates for the overcounting (each $K_s$ is counted from each of its vertices).}
\[\mathbb{E}[X_s]\approx(1-\alpha)n\binom{(1-\alpha)n}{s-1}\binom{\alpha n}{r-s}\frac{1}{\binom{n}{r-1}}\frac{1}{s}\approx\frac{n}{r}\binom{r}{s}(1-\alpha)^s\alpha^{r-s}.\]

Now observe that, since $H$ is a complete bipartite graph, in building a longest cycle, each vertex of $A$ can `absorb' each of the components of $G[B]$.
The conclusion is that, if the number of such components is larger than the number of vertices in $A$, then a Hamilton cycle is impossible.
That is, a necessary condition for a Hamilton cycle would be that
\[\sum_{s=1}^rX_s\leq\alpha n.\]
If we consider the expectations (for the lower bound, Markov's inequality provides sufficient concentration), we have that
\[\sum_{s=1}^r\mathbb{E}[X_s]\approx\frac{n}{r}\sum_{s=1}^r\binom{r}{s}(1-\alpha)^s\alpha^{r-s}=\frac{n}{r}(1-\alpha^r),\]
so our necessary condition becomes 
\[\frac{n}{r}(1-\alpha^r)\leq\alpha n\iff \alpha^r+r\alpha-1\geq0.\]

We think the problem might be interesting for other instances of $F$ as well.

In a different direction, we also believe the problem could be interesting for hypergraphs.
In particular, following work of \citet{AGIR20}, it is known that for every integer $r\geq2$ there exists an (explicit) constant $\rho(r)$ such that a random $d$-regular $r$-uniform hypergraph $G_{n,d}^{(r)}$ a.a.s.~contains a loose Hamilton cycle if $d>\rho(r)$, and a.a.s.~does not contain such a cycle if $d\leq\rho(r)$.
We propose the following question.

\begin{question}
Let $r\geq3$ be an integer.
For each $d\leq\rho(r)$, for which values of $\alpha$ (possibly as a function of $n$) is it true that for any $n$-vertex $r$-uniform hypergraph $H$ with $\delta(H)\geq\alpha n^{r-1}$\COMMENT{This could be relaxed to codegree conditions.} we have that a.a.s.\ $H\cup G_{n,d}^{(r)}$ contains a loose Hamilton cycle?
\end{question}

\section*{Acknowledgement}

We would like to thank Padraig Condon for nice discussions at an early stage of this project.
We are also indebted to the anonymous referees for their helpful comments and suggestions.


\bibliographystyle{mystyle} 
\bibliography{regpert}


\end{document}